\newcommand{\defeq}{\stackrel{\rm{def}}{=}}
\newcommand{\spn}{\operatorname{span}}
\DeclareMathOperator{\supp}{supp}
\newtheorem{theorem}{Theorem}[section]
\newtheorem{definition}[theorem]{Definition}
\newtheorem{proposition}[theorem]{Proposition}
\newtheorem{lemma}[theorem]{Lemma}
\newtheorem{corollary}[theorem]{Corollary}
\theoremstyle{remark}
\newtheorem{remark}[theorem]{Remark}
\numberwithin{equation}{section}
\title[Stability and instability in fKdV]{Stability and instability of solitary waves\\ in  fractional generalized KdV equation\\ in all dimensions}
\author[O. Ria\~no]{Oscar Ria\~no}
\address{Departamento de Matem\'aticas, Universidad Nacional de Colombia, Bogot\'a, Colombia}
\curraddr{}
\email{ogrianoc@unal.edu.co}
\author[S. Roudenko]{Svetlana Roudenko}
\address{Department of Mathematics \& Statistics\\Florida International University,  Miami, FL, USA}
\curraddr{}
\email{sroudenko@fiu.edu}
\subjclass[2020]{35B35,35Q35, 35Q51, 35Q53} 
\keywords{fractional KdV equation, Benjamin-Ono equation, solitons, solitary waves, stability, instability}
\begin{document}

\begin{abstract}
We study stability of solitary wave solutions for the fractional generalized Korteweg-de Vries equation
$$
\partial_t u- \partial_{x_1} D^{\alpha}u+  \tfrac{1}{m}\partial_{x_1}(u^m)=0, ~ (x_1,\dots,x_d)\in \mathbb{R}^d, \, \, t\in \mathbb{R}, \, \, 0<\alpha <2,
$$
in any spatial dimension $d\geq 1$ and nonlinearity $m>1$. The arguments developed here are independent of the spatial dimension and rely on the {\it new} estimates for
spatial decay of ground states and their regularity.

In the $L^2$-subcritical case, we prove the orbital stability of solitary waves using the concentration-compactness argument, the commutator estimates and expansions of nonlocal operator $D^\alpha$ in several variables. 

In the $L^2$-supercritical case, we show that solitary waves are unstable. More precisely, the instability is obtained by constructing an explicit sequence of initial conditions that move away from a soliton orbit in finite time, this is shown in conjunction with the modulation and truncation arguments, and incorporating the decay and regularity of the ground states. 

As a consequence, in 1D we show the instability of solitary waves of the supercritical generalized Benjamin-Ono equation ($\alpha=1$) and the dispersion-generalized Benjamin-Ono equation ($1<\alpha<2$); furthermore, new results on the instability are obtained in the weaker dispersion regime when $\frac{1}{2}<\alpha<1$. 
This work should be of interest in studying stability of solitary waves and other coherent structures in a variety of dispersive equations that involve {\it nonlocal} operators. 
\end{abstract}

\maketitle

\tableofcontents

\section{Introduction}\label{S:Introduction}

We study the fractional generalized KdV (fKdV) equation
\begin{align}\label{EQ:fKdV}
\qquad \partial_t u- \partial_{x_1} D^{\alpha}u+  \frac{1}{m}\partial_{x_1}(u^m)=0, ~~ x=(x_1,\dots,x_d)\in \mathbb{R}^d, \, \, t\in \mathbb{R}, \, \, 0<\alpha <2,
\end{align}
where the spatial dimension $d\geq 1$, $m> 1$, $u = u(x_1, \dots, x_d, t)$ is real-valued, and the operator $D^{\alpha}$ denotes the Riesz potential of order $-\alpha$ defined via the singular integral operator
\begin{equation}
D^{\alpha}f(x)=c_{d,\alpha} \, p.v.\int_{\mathbb{R}^d} \frac{f(x)-f(y)}{|x-y|^{d+\alpha}} \, dy,
\end{equation}
for some constant $c_{d,\alpha}>0$, which depends on $\alpha$ and $d$. Equivalently, $D^{\alpha}$ is defined as the Fourier multiplier operator with the symbol $|\xi|^{\alpha}$.

The family of the fractional KdV equations describes several physical models and processes, for example, see \cite{ABFS1989} and references therein. 
Setting the dispersion $\alpha=2$ and the dimension $d=1$ in \eqref{EQ:fKdV}, one can easily recognize the generalized KdV (gKdV) equation, which appears in various physical contexts, for instance, the most famous one, the KdV ($m=2$), that describes the long time evolution of small amplitude dispersive waves that propagate on the surface of shallow water, and also appear in lattices, elastic rods, and plasma physics, e.g. see \cite{Miura1976}. When $\alpha=1$ and $d=1$, fKdV \eqref{EQ:fKdV} coincides with the generalized Benjamin-Ono (BO) equation, a specific case of which with $m=2$ describes waves in stratified fluids and deep water, see \cite{B1967, O1975}. 
Fixing $\alpha=1$ and $m=2$ in dimension $d=2$ in \eqref{EQ:fKdV}, the fKdV yields a higher-dimensional BO equation (called Shrira equation, and often referred to as HBO), which was originally proposed by V. Shrira to describe 
long-wave perturbations in a boundary-layer type shear flow \cite{Shrira1989} (see also a variant of this model in \cite{PS1994}). When $\alpha=2$ and $m=2$ in \eqref{EQ:fKdV}, the resulting equation agrees with Zakharov-Kuznetsov (ZK) equation, which is  used to model weakly nonlinear ion-acoustic waves in the presence of a uniform magnetic field (see, for instance, \cite{ZakharovKuznet1974}). 
The generalized ZK equation (gZK, $m \neq 2$) offers a rich variety of traveling solitary wave phenomena (stability/instability, depending on $m$ and the dimension), e.g., see \cite{Bouard1996, FHRY2020, FHR2019c, FHR2019b, KRS2020, KRS2021}.
The one-dimensional case of fKdV with $1<\alpha<2$ and $m=2$ is known as the dispersion generalized Benjamin-Ono equation \cite{LFA2014, FonsLinaresPonce2013}. In general, the fKdV equation in dimension $d=1$ has been used in a variety of wave phenomena, and the lower dimensional case $0<\alpha<1$ has been investigated as a model to measure the influence of dispersive effects on the dynamics of Burger's equation. For other studies on the $1d$ fKdV equation, for example, see \cite{AnguloBonaLinaresScialom2002, KenigPonceVega1991, Argenis2020, 
Riano2021, Argenis2020I, RWY2021}, 
for the higher dimensional fKdV, see \cite{Oscar2020,HLORW2019,Schippa2020,OscSvetKai2021}, and reference therein. 

Solutions of \eqref{EQ:fKdV} formally satisfy three conservation laws: the $L^2$-norm (or sometimes referred as the momentum or mass) conservation
\begin{align}\label{E:mass}
M[u(t)] \defeq \frac{1}{2}\int_{\mathbb{R}^d} |u(x,t)|^2 \, dx =M[u(0)],
\end{align}
the energy (or Hamiltonian) conservation 
\begin{align}\label{E:energy}
E[u(t)] \defeq \frac{1}{2} \int_{\mathbb{R}^d} |D^{\frac{\alpha}{2}}u(x,t)|^2 \, dx  -\frac{1}{m(m+1)} \int_{\mathbb{R}^d} \big(u(x,t)\big)^{m+1} \,  dx  = E[u(0)],
\end{align}
and the time invariant $L^1$-type integral in $x_1$-direction
\begin{equation}\label{E:l1inte}
\int_{\mathbb{R}} u(x_1,\dots,x_d,t)\, dx_1 =\int_{\mathbb{R}} u(x_1,\dots,x_d,0)\, dx_1,
\end{equation}
which can also be stated in the $d$-dimensional form as
\begin{equation}\label{E:l1inte-g}
\int_{\mathbb{R}^d} u(x,t)\, dx =\int_{\mathbb{R}^d} u(x,0)\, dx.
\end{equation} 
The equation \eqref{EQ:fKdV} is invariant under the scaling 
$$
u_{\lambda}(x,t)=\lambda^{\frac{\alpha}{m-1}} u(\lambda x,\lambda^{1+\alpha} t)
$$
for any positive $\lambda$. Thus, \eqref{EQ:fKdV} is invariant in the Sobolev space $\dot{H}^{s_{c}}(\mathbb{R}^d)$ with 
\begin{equation}\label{CritiInd}  
s_{c}=\frac{d}{2}-\frac{\alpha}{m-1}.
\end{equation}
The critical index $s_{c}$ is convenient for classification of the equation \eqref{EQ:fKdV}  according to the values of $m>1$. 
Namely, when $1<m<\frac{2\alpha}{d}+1$ ($s_{c}<0$), the equation \eqref{EQ:fKdV} is referred to as the $L^2$-subcritical; if $m=\frac{2\alpha}{d}+1$ ($s_{c}=0$), it 
is $L^2$-critical; when $m>\frac{2\alpha}{d}+1$ ($s_{c}>0$), the equation \eqref{EQ:fKdV} is $L^2$-supercritical. We also note that it is energy-critical if $m=\frac{\alpha+d}{d-\alpha}$ (or $s_c=\frac{\alpha}{2}$).

Regarding the well-posedness of the Cauchy problem for \eqref{EQ:fKdV} in Sobolev spaces $H^s(\mathbb{R}^d)$ (a Cauchy problem is well-posed in a given function space if 
existence, uniqueness of solution, and continuous dependence of the data-to-solution flow map are established), we recall the following results. 

$\blacklozenge$ 
{Dimension $d=1$:}

The initial value problem for \eqref{EQ:fKdV} was initially considered by Kenig, Ponce and Vega in \cite{KenigPonceVega1991} in the range $1 \leq \alpha <2^{\,}$\footnote{In \cite{KenigPonceVega1991} for $\alpha=2$ the lwp for $s>\frac34$ and gwp for $s  \geq 1$ are obtained, however, we don't consider this case here. }: for $m=2$ the local well-posedness  was obtained in $H^{s}(\mathbb{R})$ for $s>\frac{9-3\alpha}{4}$ and global well-posedness in the energy space $H^{\frac{\alpha}{2}}(\mathbb{R})$ for $\alpha \geq \frac{9}{5}$ (and $\alpha<2$); 
the same authors in \cite{KenigPonceVega1994} showed 
the local well-posedness for $m=3$ in $H^s$, $s \geq \frac{7-3\alpha}4$; for 
$m=4$ in $H^s$, $s \geq \frac{19-9\alpha}{12}$ and their approach generalizes for $m \geq 4$. Several improvements of these results have been done over the recent years, below we state the best known results, splitting them into three cases $0<\alpha<1$, $\alpha=1$ and $1<\alpha<2$.

\underline{Case:} $ 1<\alpha<2$ 
\begin{itemize}
\item[$\bullet$]
$m=2$: Herr, Ionescu, Kenig, and Koch \cite{HerrIoneKeniKoch2010} improved the global well-posedness of \eqref{EQ:fKdV} down to\footnote{For a simpler approach 
in this range of $\alpha$, see \cite{MolinetVento2015}, though that result is a little weaker: $s \geq 1-\frac{\alpha}2$. } $L^2(\mathbb{R})$. Recently, the local well-posedness was improved in \cite{AiLiu2024} to $s>\frac34(1-\alpha)$.

\item[$\bullet$]  
$m=3$: Guo \cite{Guo2012} obtained the local well-posedness in $H^{s}(\mathbb{R})$ for $s\geq \frac{3-\alpha}{4}$ and global well-posedness for $s\geq \frac{\alpha}{2}$.\footnote{For further progress on global existence and scattering in this case, see \cite{KleinSautWang2022,SautWang2021}.}    

\item[$\bullet$] 
$m>2\alpha +1$: Farah, Linares, and Pastor \cite{LFA2014} obtained the global solutions  
in the energy space $H^{\frac{\alpha}{2}}(\mathbb{R})$ with mass-energy-gradient restrictions on the initial data.
\end{itemize}

\underline{Case:}
$\alpha=1$  
\begin{itemize}
\item[$\bullet$]
$m=2$ (BO): the original question of well-posedness in $H^s(\mathbb R)$ was investigated by Saut \cite{Saut1979}, Iorio \cite{Iorio1986}, Ginibre-Velo \cite{GV89}, Abdelouhab et al \cite{ABFS1989}, with improvements via local smoothing by Ponce \cite{Ponce1991} (see also further developments in \cite{KochTzvetkov2003, KenigKoenig2003}). A significant advance using a gauge transform was by Tao \cite{Tao2004} obtaining global well-posedness in $H^1(\mathbb R)$ (with further advances in \cite{BurqPlanchon2008}). The global well-posedness in $L^2(\mathbb{R})$ was first obtained by Ionescu and Kenig \cite{KenigIonescu2007}, with several alternative methods and proofs in
\cite{molinetPilodBO},\cite{IfrimTata2019} and \cite{Talbut2021}. The 
sharp well-posedness in $H^s(\mathbb{R})$, $s>-\frac{1}{2}$, was recently established by Killip, Laurens, and Vi{\c s}an, \cite{KillipLaurensVisan2024} (see also ill-posedness results in \cite{BiagioniLinares2001}, \cite{KochTzvetkov2005}).  

\item[$\bullet$]
$m=3$: Kenig and Takaoka \cite{KenigTakaoka2006} showed the global well-posedness in the energy space $H^{\frac{1}{2}}(\mathbb{R}^d)$. 

\item[$\bullet$]
$m=4$: Vento \cite{Vento2010}, Molinet and Ribaud  \cite{MolinetRibaud2004} established local well-posedness in $H^{s}(\mathbb{R})$, $s>\frac{1}{3}$. 

\item[$\bullet$]
$m\geq 5$: same authors in \cite{Vento2010} and  \cite{MolinetRibaud2004} established local wellposedness for $s\geq \frac{1}{2}-\frac{1}{m-1}$.
\end{itemize}

\underline{Case:}
$0<\alpha<1$
\begin{itemize}
\item[$\bullet$]
$m=2$: initially the local well-posedness was established by Linares, Pilod and Saut \cite{LinaresPilodSaut2014} in $H^{s}(\mathbb{R})$ with $s >\frac{3}{2}-\frac{3\alpha}{8}$, and a few years later was improved by Molinet, Pilod and Vento \cite{MolPilodVen2018} to $s > \frac{3}{2}-\frac{5\alpha}{4}$, which allowed to obtain the global well-posedness in the energy space $H^{\frac{\alpha}{2}}(\mathbb{R})$ for $\frac{6}{7}<\alpha<1$ 
(it is conjectured that in this case the global well-posedness holds for all $\frac{1}{2}<\alpha<1$).
\end{itemize}

$\blacklozenge$
{Dimension $d \geq 2$}:

By a standard parabolic regularization argument (e.g., \cite{ABFS1989,Iorio1986}, see also the proof of Lemma \ref{L:Regular}), the local well-posedness holds in a general case of $d\geq 1$, $0< \alpha <2$, $m\geq 2$ ($m$ is an integer) in $H^s(\mathbb R^d)$ for $s>\frac{d}{2}+1$. To the best of our knowledge, when $d\geq 2$, there are no results addressing the global well-posedness of the Cauchy problem \eqref{EQ:fKdV}.
For $s \leq \frac{d}2+1$, the known local well-posedness results are the following.

\underline{Case:} $1\leq \alpha <2$:
\begin{itemize} 
\item[$\bullet$]
$m=2$:  when $\alpha=1$, Hickman, Linares et al. \cite{HLORW2019} studied the local well-posedness in $H^s(\mathbb{R}^d)$, $s>\frac{5}{3}$ if $d=2$, and $s>\frac{d}{2}+\frac{1}{2}$ if $d\geq 3$. Later, for $\alpha=1$ and $d=2$ , the local well-posedness was improved by Schippa \cite{Schippa2020}, who also extended the local well-posedness to $1\leq \alpha<2$ in $H^s(\mathbb R^d)$ with $s>\frac{d+3}{2}-\alpha$.  
\end{itemize}

We also mention that for $d\geq 1$ and $0<\alpha<2$, the well-posedness of the fKdV equation has been studied in {\it weighted} spaces $H^{s}(\mathbb{R}^d)\cap L^2(\omega(x)\, dx)$ with certain weights $\omega(x)$, e.g., $(1+|x|)^a$, 
see  \cite{Iorio1986, FP2011, FonsLinaresPonce2013, Oscar2020, Riano2021}.

Now, that we have reviewed the existence of solutions, we discuss a specific type of solutions, traveling waves, which preserve their shape. 
The {\it traveling solitary wave} solutions 
for the equation \eqref{EQ:fKdV} are of the form 
\begin{equation}\label{travequ}
u(x_1,\dots,x_d,t)=Q_c(x_1-c\,t,x_2,\dots,x_d), ~~c>0.
\end{equation}
Note that the solitary waves travel to the right in the (positive) $x_1$-direction, since $c>0$.
Substituting \eqref{travequ} 
into \eqref{EQ:fKdV}, we deduce that $Q_c$ satisfies 
\begin{equation}\label{EQ:groundState}
c\,Q_c+D^{\alpha}Q_c-\tfrac{1}{m}Q^{m}_c=0,
\end{equation}
where 
\begin{equation}\label{E:Qscaled}
Q_c(x)=c^{\frac{1}{(m-1)}} Q(c^{\frac{1}{\alpha}}x), ~~ x\in \mathbb{R}^d    
\end{equation} 
with $Q \equiv Q_1$ solving \eqref{EQ:groundState} with $c=1$. The equation \eqref{EQ:groundState} has many solutions, the one which is relevant to the traveling solitary waves is a non-trivial, positive, radial $H^{\frac{\alpha}{2}}$ (thus, vanishing at infinity) solution called a ``ground state". 
The existence of ground state solutions (via minimization of a certain functional) goes back to the work of Weinstein \cite{W87}, \cite{W83}, Albert, Bona and Saut \cite{ABS1997}, see also Strauss \cite{S77} and Beresticky and Lions \cite{BL83}. The uniqueness in the fractional case of $\alpha$ (and decay properties) were obtained by Frank and Lenzmann in \cite{FL2013} for the $1d$ case and by Frank, Lenzmann and Silvestre in \cite{FLS2016} in higher dimensions (see also \cite{Maris2002} for the uniqueness when $\alpha=1$ in dimension $d=2$ and Kwong \cite{Kw1989} for $\alpha=2$ and references therein to prior work). For brevity of the introduction, we provide a more detailed discussion about the ground state in Section \ref{sectionPrelimDecay}.

In this work, we investigate {\it stability} and {\it instability} of solitary wave solutions to the equation \eqref{EQ:fKdV} in {\it subcritical} and {\it supercritical} regimes, in {\it any} spatial dimension. We analyze these (in)stability properties according to the critical scaling index $s_c$ \eqref{CritiInd}, which depends on the nonlinearity $m$, the dispersion parameter $\alpha$, and the dimension $d\geq 1$. More precisely, in the subcritical case, $1<m<\frac{2\alpha}{d}+1$, we show the {\it orbital stability} of solitary waves, and in the supercritical case, $\frac{2\alpha}{d}+1<m<m_{\ast}$ (for definition of $m_{\ast}$ see \eqref{defcritm} below), 
we prove their instability, providing an example of data which shows the instability.   
We emphasize that the construction of the unstable solution is new in the nonlocal dispersion case, where we invent several approaches to handle nonlocal differential operators. The proof of stability relies on Albert's method based on concentration compactness and extra handling of nonlocal operators as well. Before providing further details and comments, we give notation and definitions.

We begin with the definition of orbital stability and then instability.  For a fixed $c>0$ and a given $\omega>0$, we define a neighborhood (or ``tube") of radius $\omega$ around $Q_c$ (modulo translation) by
\begin{equation}\label{tubdef}
U_{\omega}=\{u\in H^{\frac{\alpha}{2}}(\mathbb{R}^d): \, \inf_{z\in \mathbb{R}^d}\|u(\cdot)-Q_c(\cdot+z)\|_{H^{\frac{\alpha}{2}}}\leq \omega\}.
\end{equation}

\begin{definition}\label{orbStabdef}
Let $c>0$. Then $Q_c$ is {\rm orbitally stable} in $H^{\frac{\alpha}{2}} (\mathbb{R}^d)$ if for any $\omega>0$ there exists $\delta>0$ such that $u_0 \in U_{\delta}$ implies that the solution $u(t)$ with $u(x,0)=u_0$ is in $U_{\omega}$ for all $t\in \mathbb{R}$, i.e., 
\begin{equation}
\sup_{t \in \mathbb{R}} \inf_{z\in \mathbb{R}^d}\|u(\cdot,t)-Q_c(\cdot+z)\|_{H^{\frac{\alpha}{2}}}<\omega.
\end{equation}
\end{definition}

\begin{definition}\label{orbInstabdef}
Let $c>0$. Then 
$Q_c$ is {\rm unstable} if there exists $\omega>0$ such that for all $\delta>0$ the following holds: if $u_0 \in U_{\delta}$, then there exists $t_0=t_0(u_0)>0$ such that $u(t_0)\notin U_{\alpha}$.
\end{definition}

Our first result establishes the orbital stability of traveling solitary waves $Q_c$ as in \eqref{travequ}-\eqref{EQ:groundState}-\eqref{E:Qscaled} in the range\footnote{We note that our methods are applicable for the full range of $\alpha \in (0,2)$, though due to the integer restriction on $m$ in the nonlinearity $u^{m-1}u_{x_1}$, the range is shortened (both in $\alpha$ and $m$). When using the nonlinearity with the absolute value $|u|^{m-1} u_x$, we get the full range.} of $0< \alpha<2$.

\begin{theorem}\label{OrbstabilThm}
Let $0< \alpha<2$, $d \geq 1$ and 
$c>0$. Take an integer $m$ such that 
$1<m<\frac{2\alpha}{d}+1$.
For any $\omega>0$ there exists $\delta>0$ such that if $u_0 \in H^s(\mathbb{R}^d)$, $s>\frac{d}{2}+1$, satisfy
\begin{equation}
\|u_0-Q_c\|_{H^{\frac{\alpha}{2}}}<\delta,
\end{equation}
then the solution $u(x,t)$
to \eqref{EQ:fKdV} with  
$u(x, 0)= u_0$ satisfies
\begin{equation}\label{E:stab-conclusion}
\sup_{t \in \mathbb R} \inf_{z\in \mathbb{R}^d} \|u(\cdot,t) - Q_c(\cdot+z) \|_{H^{\frac{\alpha}{2}}}< \omega.
\end{equation}
\end{theorem}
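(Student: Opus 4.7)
The plan is to follow the classical Cazenave--Lions concentration-compactness framework adapted to the nonlocal operator $D^\alpha$ on $\mathbb{R}^d$. Introduce the constrained minimization
\begin{equation*}
I_\lambda \defeq \inf\bigl\{E[u]:u\in H^{\frac{\alpha}{2}}(\mathbb{R}^d),\ M[u]=\lambda\bigr\}.
\end{equation*}
The subcriticality hypothesis $1<m<\frac{2\alpha}{d}+1$ enters through the fractional Gagliardo--Nirenberg inequality
\begin{equation*}
\|u\|_{L^{m+1}}^{m+1}\lesssim \|D^{\frac{\alpha}{2}}u\|_{L^2}^{\frac{(m-1)d}{\alpha}}\|u\|_{L^2}^{m+1-\frac{(m-1)d}{\alpha}},
\end{equation*}
in which the exponent on the kinetic factor is strictly less than $2$. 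This yields $I_\lambda>-\infty$, while testing $E$ on the amplitude dilation $u_\theta(x)=\theta^{1/2}u(x)$ of any fixed $H^{\frac{\alpha}{2}}$ function shows $I_\lambda<0$ for every $\lambda>0$. The heart of the argument is then to establish that every minimizing sequence for $I_\lambda$ is relatively compact in $H^{\frac{\alpha}{2}}$ modulo $x$-translations.

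To run Lions' concentration-compactness trichotomy on the density $|u_n|^2$: \emph{vanishing} would force $\|u_n\|_{L^{m+1}}\to 0$, contradicting $E[u_n]\to I_\lambda<0$; \emph{dichotomy} is excluded by the strict subadditivity $I_{\lambda_1+\lambda_2}<I_{\lambda_1}+I_{\lambda_2}$, which comes from the scaling inequality $I_{\theta\lambda}<\theta I_\lambda$ for $\theta>1$ valid because $m>1$. After translation a subsequence therefore converges weakly and $L^{m+1}$-strongly to a limit $u_*$; combining $E[u_n]\to I_\lambda=E[u_*]$ with the lower semicontinuity of the $\dot H^{\frac{\alpha}{2}}$-seminorm upgrades convergence to $H^{\frac{\alpha}{2}}$-strong. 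The minimizer $u_*$ satisfies the Euler--Lagrange equation $cu_*+D^\alpha u_*-\frac{1}{m}u_*^m=0$ for a Lagrange multiplier $c>0$. Invoking the uniqueness and positivity of the fractional ground state from Frank--Lenzmann \cite{FL2013} for $d=1$ and Frank--Lenzmann--Silvestre \cite{FLS2016} for $d\geq 2$ (as recalled in Section \ref{sectionPrelimDecay}), the minimizer set coincides with the translation orbit $\{Q_{c(\lambda)}(\cdot+z):z\in\mathbb{R}^d\}$, and the scaling \eqref{E:Qscaled} shows $\lambda\mapsto c(\lambda)$ is a bijection onto $(0,\infty)$; I then pick $\lambda=M[Q_c]$.

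Orbital stability follows by the familiar contradiction argument. If \eqref{E:stab-conclusion} fails, there exist $\omega_0>0$, initial data $u_{0,n}\to Q_c$ in $H^{\frac{\alpha}{2}}$, and times $t_n$ with $\inf_{z}\|u_n(\cdot,t_n)-Q_c(\cdot+z)\|_{H^{\frac{\alpha}{2}}}\geq \omega_0$. Since $u_{0,n}\in H^s(\mathbb{R}^d)$ with $s>\frac{d}{2}+1$, the conservation laws \eqref{E:mass} and \eqref{E:energy} hold on the whole real line, so $v_n\defeq u_n(\cdot,t_n)$ satisfies $M[v_n]\to \lambda$ and $E[v_n]\to I_\lambda$. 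A mild rescaling places $v_n$ exactly on the constraint $M=\lambda$, making $\{v_n\}$ a minimizing sequence; the compactness step then produces, along a subsequence and after translation, an $H^{\frac{\alpha}{2}}$-limit equal to a translate of $Q_c$, contradicting the choice of $t_n$.

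The principal obstacle I expect is the compactness step in the presence of the nonlocal $D^\alpha$. Localizing a minimizing sequence by smooth cutoffs $\chi$ introduces cross terms such as $\langle D^{\alpha/2}(\chi u_n),D^{\alpha/2}((1-\chi)u_n)\rangle$, which vanish for local operators but here must be absorbed using commutator bounds for $[D^\alpha,\chi]$ together with expansions of $D^\alpha$ in several variables (the tools advertised in the abstract). The same commutator machinery is required to verify the strict subadditivity quantitatively in the dichotomy case and to justify the lower-semicontinuity-to-strong-convergence upgrade, and must be proved uniformly across $\alpha\in(0,2)$ and all dimensions $d\geq 1$.
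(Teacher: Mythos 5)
Your proposal follows the same Cazenave--Lions / Albert concentration-compactness framework as the paper: minimize $E$ over the mass sphere, prove $-\infty<I_\lambda<0$, exclude vanishing via Gagliardo--Nirenberg and dichotomy via strict subadditivity, upgrade weak to strong $H^{\alpha/2}$-convergence, identify the minimizer set with the translation orbit of $Q_c$ via Frank--Lenzmann(--Silvestre) uniqueness, and conclude stability by contradiction. Two steps as written do not quite work, however.

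First, the negativity $I_\lambda<0$ for \emph{every} $\lambda>0$ does not follow from testing with the amplitude dilation $u_\theta=\theta^{1/2}u$. That dilation gives $M[u_\theta]=\theta M[u]$ and
\begin{equation*}
E[u_\theta]=\frac{\theta}{2}\int|D^{\alpha/2}u|^2-\frac{\theta^{(m+1)/2}}{m(m+1)}\int u^{m+1},
\end{equation*}
so once you fix $\lambda$ (and hence $\theta=\lambda/M[u]$), the sign is indeterminate; for small $\lambda$ (small $\theta$) the kinetic term dominates and the test energy is positive. This dilation yields $I_\lambda<0$ only for $\lambda$ large. The paper instead uses the $L^2$-preserving scaling $\varphi_\theta(x)=\theta\varphi(\theta^{2/d}x)$, under which $M[\varphi_\theta]=M[\varphi]$ and $E[\varphi_\theta]=\tfrac{1}{2}\theta^{2\alpha/d}\int|D^{\alpha/2}\varphi|^2-\tfrac{1}{m(m+1)}\theta^{m-1}\int\varphi^{m+1}$; since $m-1<2\alpha/d$ the nonlinear term dominates as $\theta\to 0^+$, giving $E[\varphi_\theta]<0$ at the \emph{prescribed} mass. (Alternatively, your argument can be rescued by combining negativity at one $\lambda$ with the exact homogeneity $I_{\theta\lambda}=\theta^{p}I_\lambda$, $p>1$, that you later invoke for subadditivity, but as stated the step fails.)

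Second, to invoke uniqueness and identify the minimizer as a translate of $Q_c$, you still need to know that the Euler--Lagrange multiplier is positive and that the minimizer may be taken nonnegative. The paper extracts positivity of the multiplier from the Pohozaev identities (Lemma \ref{PHident}), and positivity of the minimizer from the pointwise fractional inequality $\|D^{\alpha/2}|f|\|_{L^2}\le\|D^{\alpha/2}f\|_{L^2}$, which gives $E[|f|]\le E[f]$ with equality only if $f$ has a sign; your proposal asserts both facts but supplies no argument. Neither is automatic in the nonlocal setting and both should be filled in.
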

We remark that in those cases, where the global well-posedness is not yet available, the supremum over all $t \in \mathbb R$ in \eqref{E:stab-conclusion} should be understood as $t \in (-T_\ast, T^{\ast})$, 
the maximal time interval of existence of the solution $u(t)$. To clarify even further, the imposed conditions on the range of $\alpha$ as well as $m$ being integer (i.e., $0< \alpha<2$ and $1<m<\frac{2\alpha}{d}+1$, $m \in \mathbb N$) make our stability result to be valid in dimensions $d=1, 2, 3$ with $\frac{d}{2}<\alpha<2$. 
When the nonlinearity is modified to allow non-integer $m$, Theorem \ref{OrbstabilThm} should still hold; for example, in Remark \ref{remarkstabilianym} below, we can consider an arbitrary $m>1$ and obtain orbital stability for {\it any} arbitrary dimension and dispersion $\alpha\in(0,2)$ (as long as the equation is $L^2$-subcritical).

The proof of  Theorem \ref{OrbstabilThm} is based on the concentration-compactness ideas of Lions \cite{LionsI1984,LionsII1984}, which were developed in the KdV context by Albert \cite{Albert1999} for the equation \eqref{EQ:fKdV}, $1\leq \alpha <2$, $d=1$ and integer $m \geq 2$, 
and extended by Linares, Pilod and Saut \cite{LinPilodSaut2015} 
for $0< \alpha<1$, $d=1$, and $m=2$. 
In this work we prove the orbital stability of solitons in a general setting (including higher dimensions), and we stress that our results are independent of the spatial dimension $d\geq 1$ as long as the conditions $0< \alpha<2$, $1<m<\frac{2\alpha}{d}+1$, $m \in \mathbb N$ are met (i.e., in $d=1,2,3$). The idea of proving the Theorem \ref{OrbstabilThm} is to study the minimizers of the energy functional \eqref{E:energy} subject to a fixed mass constrain (the existence of such minimizers is known, see \cite{FLS2016}).  
Then the concentration-compactness technique gives the orbital stability of the set of these minimizers, implying the orbital stability of the solitary waves of \eqref{EQ:fKdV}. 
One major difficulty that we are faced in proving this general result, including in higher dimensions, is the presence of the {\it nonlocal} operator $D^{\alpha}$ in several variables. We overcome this issue by carefully using the commutator estimates and expansions of the operator $D^{\alpha}$, as stated in Proposition \ref{fractionalDeriv}. 
Several remarks are due.  

\begin{remark}
The stability of solitons obtained in Theorem \ref{OrbstabilThm} holds on the known maximal interval of existence of the perturbation $u$, and thus, conditional up to the global well-posedness in $H^{s}(\mathbb{R}^d)$, $s\leq \frac{\alpha}{2}$. 
For simplicity, we stated Theorem \ref{OrbstabilThm} for initial data $u_0\in H^s(\mathbb{R}^d)$, $s>\frac{d}{2}+1$, however, the same conclusion is valid for {\it any} subcritical fKdV, which is well-posedness in $H^s(\mathbb{R}^d)$, $s\geq \frac{\alpha}{2}$. 
In particular, when $d=1$, due to the available global theory, Theorem \ref{OrbstabilThm} recovers the (unconditional) orbital stability of solitary waves from \cite{BonaSouganidisStrauss1987,Albert1992,BBSSB1983,AlbertBona1991, 
Benjamin1972,AlbertBonaHenry1987,Albert1999,LinPilodSaut2015}. More precisely, the following is the consequence of the proof of Theorem \ref{OrbstabilThm} in $1d$. 
\end{remark}

\begin{corollary}
Let $d=1$.
\begin{itemize}
    \item[(a)]  For $m=2$ consider $\frac{6}{7}<\alpha<2$ in \eqref{EQ:fKdV}.
    \item[(b)]  For an integer $m$ such that $3\leq m <2\alpha+1$, consider $\max\{\frac{m-1}{2},\frac{9}{5}\}<\alpha<2$ in \eqref{EQ:fKdV}.
\end{itemize}
Under either (a) or (b), $Q_c$ is orbitally stable in $H^{\frac{\alpha}{2}}$ (in the sense of Definition \ref{orbInstabdef}).  
\end{corollary}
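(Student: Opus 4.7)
The plan is to combine the general orbital stability statement (Theorem \ref{stabilTheorem-2}, the $H^{\alpha/2}$ formulation of Theorem \ref{OrbstabilThm} mentioned just before the corollary) with the available global well-posedness results in the energy space $H^{\alpha/2}(\mathbb{R})$ in dimension $d=1$. The corollary is essentially a bookkeeping statement matching the subcritical hypothesis against the $1d$ GWP literature catalogued in the Introduction, so no genuinely new analysis is required.

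First I would verify that the parameter ranges in (a) and (b) lie inside the $L^2$-subcritical regime $1<m<2\alpha+1$, so that the variational setup behind Theorem \ref{OrbstabilThm} applies. For (a), $m=2$ is subcritical precisely when $\alpha>1/2$, which is compatible with $6/7<\alpha<2$. For (b), the constraint $m<2\alpha+1$ is equivalent to $\alpha>(m-1)/2$, and the hypothesis $\alpha>\max\{(m-1)/2,\tfrac{9}{5}\}$ simultaneously enforces subcriticality and places us in the Kenig--Ponce--Vega global range.

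Next I would assemble the $1d$ global well-posedness results in $H^{\alpha/2}(\mathbb{R})$. In case (a) with $1\leq \alpha<2$, Ionescu--Kenig \cite{KenigIonescu2007} (at $\alpha=1$) and Herr--Ionescu--Kenig--Koch \cite{HerrIoneKeniKoch2010} (for $1<\alpha<2$) provide GWP in $L^2(\mathbb{R})\hookrightarrow H^{\alpha/2}(\mathbb{R})$; for $\tfrac{6}{7}<\alpha<1$, Molinet--Pilod--Vento \cite{MolPilodVen2018} give GWP directly in $H^{\alpha/2}(\mathbb{R})$. In case (b), with integer $m\geq 3$, $m<2\alpha+1$, and $\tfrac{9}{5}<\alpha<2$, the Kenig--Ponce--Vega framework \cite{KenigPonceVega1991, KenigPonceVega1994} together with the $L^2$-subcriticality lets one close an a priori $H^{\alpha/2}$-bound via the conservation laws \eqref{E:mass}--\eqref{E:energy}, yielding a global flow in the energy space.

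With a global flow in $H^{\alpha/2}(\mathbb{R})$ in hand, the conclusion follows by a standard density argument: given $\omega>0$, pick $\delta>0$ from Theorem \ref{OrbstabilThm}; for any $u_0\in H^{\alpha/2}(\mathbb{R})$ with $\|u_0-Q_c\|_{H^{\alpha/2}}<\delta/2$, approximate by a sequence $u_0^{(n)}\in H^s(\mathbb{R})$, $s>\tfrac{d}{2}+1=\tfrac{3}{2}$, satisfying $\|u_0^{(n)}-u_0\|_{H^{\alpha/2}}\to 0$, apply Theorem \ref{OrbstabilThm} to each $u^{(n)}$, and pass to the limit using continuous dependence of the data-to-solution map in $H^{\alpha/2}$ together with the lower semicontinuity of the infimum over spatial translations. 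The only mildly subtle point is the uniformity of continuous dependence on each fixed time interval, which is already built into the GWP statements invoked above; apart from this, the proof reduces entirely to checking that the arithmetic ranges listed in (a) and (b) carve out the intersection of the subcritical region with the GWP regions for the $1d$ fKdV.
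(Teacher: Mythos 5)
Your proposal is correct in substance and in the same spirit as the paper, which treats the corollary as an immediate bookkeeping consequence of Theorem~\ref{stabilTheorem-2} (equivalently Theorem~\ref{OrbstabilThm}) once one observes that in $d=1$ the relevant global well-posedness in the energy space is available for the listed ranges of $(m,\alpha)$. Two points deserve comment. First, a small slip: you write ``$L^2(\mathbb{R})\hookrightarrow H^{\alpha/2}(\mathbb{R})$'', which is the wrong direction of inclusion; what is needed is persistence of regularity from the $L^2$ theory of Ionescu--Kenig and Herr--Ionescu--Kenig--Koch, which, combined with mass and energy conservation and the Gagliardo--Nirenberg inequality, yields global control of $\|u(t)\|_{H^{\alpha/2}}$ for $H^{\alpha/2}$ data. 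Second, the closing density argument (approximating $u_0\in H^{\alpha/2}$ by smoother data $u_0^{(n)}\in H^s$, $s>3/2$, and passing to the limit via continuous dependence) is legitimate, but it is a detour: the paper's remark after Theorem~\ref{OrbstabilThm} points out that the concentration-compactness proof of Theorem~\ref{stabilTheorem-2} uses only the $H^{\alpha/2}$ structure together with existence of a global flow and the conservation of $M$ and $E$, so it applies verbatim to data in $H^{\alpha/2}$ whenever the Cauchy problem is well-posed there, with no approximation step needed. If you do keep the density route, be careful that the approximants give $\leq\omega$ in the limit rather than $<\omega$; one fixes this by running Theorem~\ref{OrbstabilThm} with a smaller target $\omega'<\omega$. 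The approach otherwise matches the paper's, and your explicit matching of the parameter ranges with the GWP literature is exactly the arithmetic the paper implicitly invokes.
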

\smallskip

\begin{remark}\label{remarkstabilianym}
All arguments developed in the proof of Theorem \ref{OrbstabilThm} are valid for  $0<\alpha<2$, $1<m<\frac{2\alpha}{d}+1$, where $m$ is not necessarily an integer, in the following sense. Consider the Cauchy problem associated to the equation
\begin{align}\label{EQ:fKdV2}
\qquad \partial_t u- \partial_{x_1} D^{\alpha}u+  \frac{1}{m}\partial_{x_1}(|u|^{m-1}u)=0, ~ (x_1,\dots,x_d)\in \mathbb{R}^d, \, \, t\in \mathbb{R}, \, \, 0<\alpha <2.
\end{align}
(The one dimensional version of \eqref{EQ:fKdV2} with $\alpha=2$ was studied by the authors and collaborators in \cite{FRRSY}, where \eqref{EQ:fKdV2} was termed as GKdV to distinguish it from the standard gKdV equations without absolute value in the nonlinearity; there, a local well-posedness was obtained on some subset of a weighted Sobolev space).   
The following is a consequence of Theorem \ref{OrbstabilThm} for \eqref{EQ:fKdV2} with a non-integer $m$.
\end{remark}

\begin{corollary}
Let $0<\alpha<2$, $1<m<\frac{2\alpha}{d}+1$, and $c>0$. Assume that the Cauchy problem associated to \eqref{EQ:fKdV2} is locally well-posed in $H^{s_0}(\mathbb{R}^d)$ for some $s_0\geq \frac{\alpha}{2}$. For any $\omega>0$ there exists $\delta>0$ such that if $u_0 \in H^{s_0}(\mathbb{R}^d)$, satisfy
\begin{equation}
\|u_0-Q_c\|_{H^{\frac{\alpha}{2}}}<\delta,
\end{equation}
then the solution $u(x,t)$ to \eqref{EQ:fKdV} with $u(x,0)=u_0$ satisfies
\begin{equation}\label{E:GKdV-supinf}
\sup_{t \in \mathbb {R}} \inf_{z\in \mathbb{R}^d}\|u(\cdot,t)-Q_c(\cdot+z)\|_{H^{\frac{\alpha}{2}}}<\omega.
\end{equation}
\end{corollary}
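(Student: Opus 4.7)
The plan is to repeat, essentially verbatim, the concentration--compactness scheme developed for Theorem \ref{OrbstabilThm}, replacing only the two places where integrality of $m$ intervened: the definition of the energy functional and the construction of the solution flow. Concretely, I would first introduce the modified energy
\begin{equation*}
\widetilde E[u] \defeq \frac{1}{2}\int_{\mathbb{R}^d} |D^{\alpha/2}u|^2\,dx - \frac{1}{m(m+1)}\int_{\mathbb{R}^d} |u|^{m+1}\,dx,
\end{equation*}
which is formally conserved by smooth solutions of \eqref{EQ:fKdV2} and which coincides with \eqref{E:energy} on positive functions, in particular on $Q_c$. Together with the mass $M[u]$ in \eqref{E:mass}, this produces the same variational problem
\begin{equation*}
I_\lambda \defeq \inf\bigl\{\widetilde E[u]\,:\, u\in H^{\alpha/2}(\mathbb{R}^d),\ M[u]=\lambda\bigr\}
\end{equation*}
that underlies Theorem \ref{OrbstabilThm}, since for any $u\in H^{\alpha/2}$ one has $|u|\in H^{\alpha/2}$ with the same norms (via, e.g., the characterization of $H^{\alpha/2}$ by the Gagliardo seminorm, or by $|D^{\alpha/2}|u||\le |D^{\alpha/2}u|$ pointwise via standard fractional Kato type inequalities) and $\int |u|^{m+1}=\int(|u|)^{m+1}$.

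Next, I would run the concentration--compactness machinery of Lions as adapted for fKdV: the existence of a minimizing sequence is immediate, the Gagliardo--Nirenberg inequality in the subcritical range $1<m<\frac{2\alpha}{d}+1$ gives boundedness in $H^{\alpha/2}$, dichotomy is ruled out by the strict subadditivity $I_{\lambda_1+\lambda_2}<I_{\lambda_1}+I_{\lambda_2}$ (which only uses scaling and is insensitive to whether $m$ is an integer), and vanishing is ruled out because $I_\lambda<0$. The commutator/nonlocal expansion estimates used for $D^\alpha$ (Proposition \ref{fractionalDeriv} in the paper) are statements about the linear operator $D^\alpha$ and do not see $m$ at all. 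The outcome is that, up to translations, every minimizing sequence has a strong $H^{\alpha/2}$ subsequential limit, and the set $\mathcal{G}_\lambda$ of minimizers is nonempty. The Euler--Lagrange equation for any minimizer $Q$ reads $cQ+D^\alpha Q - \tfrac{1}{m}|Q|^{m-1}Q=0$ with a Lagrange multiplier $c>0$; restricting to nonnegative minimizers one recovers \eqref{EQ:groundState}, so by the uniqueness results of Frank--Lenzmann \cite{FL2013} and Frank--Lenzmann--Silvestre \cite{FLS2016} the set $\mathcal{G}_\lambda$ agrees, modulo translations, with the orbit of $Q_c$.

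Finally, I would promote stability of the minimizing set to stability of the soliton exactly as in Theorem \ref{OrbstabilThm}: if \eqref{E:GKdV-supinf} failed, there would be a sequence $u_{0,n}\to Q_c$ in $H^{\alpha/2}$ and times $t_n$ such that $u_n(t_n)$ stays away from the orbit of $Q_c$; by conservation of $M$ and $\widetilde E$ along the flow (which is where the local well-posedness hypothesis in $H^{s_0}$ enters, giving a legitimate solution on some $(-T_\ast,T^\ast)\ni t_n$ and enough regularity for the conservation laws to hold, with the supremum in \eqref{E:GKdV-supinf} read on $(-T_\ast,T^\ast)$), the sequence $u_n(t_n)$ is a minimizing sequence for $I_{M[Q_c]}$, contradicting the compactness statement just obtained.

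The only genuinely delicate point, and the one I would write out carefully, is the justification that $M$ and $\widetilde E$ are conserved along $H^{s_0}$-solutions of \eqref{EQ:fKdV2}: multiplication by $u$ and by $D^\alpha u -\tfrac{1}{m}|u|^{m-1}u$ is not automatic when $m-1$ is small and $|u|^{m-1}u$ is only $C^{1,m-1}$ in $u$, so one needs either a density argument through smoother data (using the continuous dependence part of the local well-posedness hypothesis) or a direct truncation of $|u|^{m-1}u$ together with a commutator/Fatou argument to pass to the limit. Everything else is a faithful transcription of the proof of Theorem \ref{OrbstabilThm}.
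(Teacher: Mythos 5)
Your proposal is correct and takes the same route as the paper, which simply records this corollary as a consequence of Theorem \ref{OrbstabilThm}, observing that the concentration--compactness machinery of Section \ref{S:OrbitalS} (and the identification of minimizers via Lemma \ref{caraGround}) nowhere uses integrality of $m$ once the nonlinearity is written as $|u|^{m-1}u$ and the energy is taken with $\int|u|^{m+1}$, with the flow supplied by the local well-posedness hypothesis and conservation laws understood on the maximal existence interval. One minor imprecision: the inequality $\|D^{\alpha/2}|u|\|_{L^2}\le\|D^{\alpha/2}u\|_{L^2}$ used in Lemma \ref{caraGround} (cited from \cite{FelmerQuaTan2012}) is a norm inequality obtained from the Gagliardo seminorm, not a pointwise bound on $D^{\alpha/2}|u|$, so your first parenthetical justification is the correct one.
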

(Similar to the note after Theorem \ref{OrbstabilThm}, the supremum on time $t$ should be understood conditionally, on the 
maximal time interval of existence of $u(t)$).

The assumption  of the local well-posedness in $H^{s}(\mathbb{R}^d)$ for the Cauchy problem associated to \eqref{EQ:fKdV2} is not always guaranteed, since the lack of regularity of the nonlinearity $|u|^{m-1}$ for the fractional $m>1$ yields extra difficulty when applying classical methods used for the well-posedness. Nevertheless, for results in this direction for \eqref{EQ:fKdV2} in 1$d$, refer to \cite{FRRSY} and \cite{LinaresMiyazakiPonce2019}.

\begin{remark}
The spectral stability in a general KdV-type setting is discussed by D. Pelinovsky in \cite{Pelinovsky2014}. 
A verification of a certain index\footnote{Sometimes called the Vakhitov-Kolokolov index.} being negative 
is shown by Angulo in \cite{Angulo2018}, i.e., $(\mathcal L^{-1}_c Q_c, Q_c)<0$, where $\mathcal L_c=D^\alpha+c-Q_c^{m-1}$. 
This is used then to conclude orbital stability via the strategy from \cite{W87}, 
however, as explained in \cite{ABS1997} additional steps have to be verified in the case of nonlocal dispersion or nonlocal operators for that strategy to be valid, and thus, a concentration-compactness argument is developed instead in \cite{ABS1997} to show the orbital stability. This is the approach we use in this paper. Certainly, the negativity of the index implies spectral stability in the subcritical ($s_c<0$) fractional gKdV. 
\end{remark}

Our next goal is to address the instability of solitons (as stated in Definition \ref{orbInstabdef}) in the supercritical case, $s_c >0$. We first introduce a specific sequence of initial data and then state the  result. For each $n \in \mathbb N$, let
\begin{equation}\label{defseqe}
u_{0,n}(x)=\lambda_n Q_c(\lambda_n^{\frac{2}{d}} x), \text{ where }   \lambda_n=1+\tfrac{1}{n}.
\end{equation}

\begin{theorem}\label{InstMainThm}
Let $\alpha \in \mathbb R$ be such that $\max\{1-\frac{d}{2},0\}<\alpha<2$ and 
let $m \in \mathbb N$ be such that $\frac{2\alpha}{d}+1<m< m_{\ast}$. Let $u_n$ be the solution of \eqref{EQ:fKdV} with initial data $u_{0,n}$ for each $n \in \mathbb N$. Then there exist $\omega>0$ such that for every $n \in \mathbb N$, there exists 
$T_n=T_n(u_{0,n})>0$ such that $u_n(T_n)\notin U_{\omega}$, or explicitly,
\begin{equation}
\inf_{z\in \mathbb{R}^d}\|u_n(\cdot,T_n)-Q_c(\cdot-z)\|_{H^{\frac{\alpha}{2}}}\geq \omega \quad \mbox{for ~ some}~ c >0.
\end{equation}
\end{theorem}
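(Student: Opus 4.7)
The plan is to argue by contradiction, combining a mass--energy gap for the scaled data $u_{0,n}$ with a modulation analysis around the soliton and a localized virial identity in the spirit of Bona--Souganidis--Strauss and Martel--Merle, adapted to the nonlocal setting via the commutator expansions of $D^{\alpha}$ (Proposition~\ref{fractionalDeriv}). A change of variables first yields $M[u_{0,n}]=M[Q_c]$ and $E[u_{0,n}]=f(\lambda_n)$, where
\[
f(\lambda)=\frac{\lambda^{2\alpha/d}}{2}\,\|D^{\alpha/2}Q_c\|_{L^2}^{\,2}-\frac{\lambda^{m-1}}{m(m+1)}\int_{\mathbb{R}^d} Q_c^{\,m+1}\,dx.
\]
Because the $L^{2}$-invariant scaling is tangent to the mass sphere at $Q_c$, the Pohozaev/Nehari identities for $Q_c$ force $f'(1)=0$; a direct computation then gives
\[
f''(1)=\frac{\alpha\bigl[\,2\alpha-(m-1)d\,\bigr]}{d^{\,2}}\,\|D^{\alpha/2}Q_c\|_{L^2}^{\,2}<0
\]
in the supercritical range $m>\tfrac{2\alpha}{d}+1$. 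Taylor expansion near $\lambda=1$ then produces constants $c_0,C_1>0$ with $E[Q_c]-E[u_{0,n}]\geq c_0(\lambda_n-1)^{2}$ and $\|u_{0,n}-Q_c\|_{H^{\alpha/2}}\leq C_1(\lambda_n-1)\to 0$; by \eqref{E:mass}--\eqref{E:energy} this gap persists on the whole maximal interval of existence of $u_n$.

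Fix $\omega>0$ small (to be determined only from $Q_c$) and suppose, for contradiction, that $u_n(t)\in U_\omega$ on $[0,T_n^{\ast})$. The implicit function theorem near the orbit of $Q_c$ provides $C^{1}$ modulation parameters $\widetilde c_n(t)>0$ and $y_n(t)\in\mathbb{R}^d$ with
\[
u_n(t,x)=Q_{\widetilde c_n(t)}\!\bigl(x-y_n(t)\bigr)+\epsilon_n(t,x),
\]
where $\epsilon_n(t)$ is $L^{2}$-orthogonal to $\partial_c Q_{\widetilde c_n}(\cdot-y_n)$ and to $\partial_{x_j}Q_{\widetilde c_n}(\cdot-y_n)$, $j=1,\dots,d$. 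Standard coercivity of the second variation of $E+\widetilde c_n M$ on this subspace yields $\|\epsilon_n(t)\|_{H^{\alpha/2}}+|\widetilde c_n(t)-c|\lesssim\omega$ together with ODE bounds on the modulation speeds $\widetilde c_n'(t)$ and $y_n'(t)$ in terms of $\|\epsilon_n(t)\|_{H^{\alpha/2}}$. Crucially, the conserved energy--mass gap forces a definite, non-vanishing projection of $\epsilon_n(t)$ onto the unstable scaling mode $\chi:=\tfrac{d}{d\lambda}\bigl|_{\lambda=1}\bigl[\lambda\,Q_c(\lambda^{2/d}\cdot)\bigr]=Q_c+\tfrac{2}{d}\,x\cdot\nabla Q_c$.

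The heart of the argument is a truncated virial identity in the $x_1$-direction. Pick an odd, non-decreasing $\psi\in C^{\infty}(\mathbb{R})$ with $\psi(r)=\sign(r)$ for $|r|\geq 1$, and for $A\gg 1$ set $\psi_A(r)=A\,\psi(r/A)$. Define
\[
\mathcal{J}_n(t)=\int_{\mathbb{R}^d}\psi_A\!\bigl(x_1-y_{n,1}(t)\bigr)\,u_n(t,x)^{2}\,dx.
\]
Differentiating in $t$ using \eqref{EQ:fKdV}, integrating by parts in $x_1$, and commuting $D^{\alpha}$ across $\psi_A$ via the multivariable expansion of Proposition~\ref{fractionalDeriv}, one extracts, after substituting the modulation decomposition and applying the Pohozaev identities for $Q_c$, an inequality of the form
\[
\frac{d}{dt}\mathcal{J}_n(t)\leq-\eta\,\bigl(E[Q_c]-E[u_{0,n}]\bigr)+\mathcal E_n(t)+R_A(t),
\]
where $\eta>0$ depends only on $Q_c$, $\mathcal E_n(t)$ collects higher-order terms in $\|\epsilon_n(t)\|_{H^{\alpha/2}}$ that are negligible once $\omega$ is small, and the commutator remainder $R_A(t)\to 0$ as $A\to\infty$ uniformly in $t$, thanks to the pointwise decay $|Q(x)|\lesssim(1+|x|)^{-(d+\alpha)}$ recorded in Section~\ref{sectionPrelimDecay}. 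Taking $\omega$ small and $A$ large, one concludes $\dot{\mathcal{J}}_n(t)\leq-\tfrac{\eta c_0}{2}(\lambda_n-1)^{2}<0$ uniformly on $[0,T_n^{\ast})$, which is incompatible with the trivial upper bound $|\mathcal{J}_n(t)|\leq A\,\|u_n(t)\|_{L^2}^{2}=A\,\|Q_c\|_{L^2}^{2}$; hence $u_n$ must leave $U_\omega$ at some finite $T_n>0$.

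The main technical obstacle is the control of the commutator remainder $R_A$: the long-range character of $D^{\alpha}$, combined with the slow algebraic decay $|x|^{-(d+\alpha)}$ of $Q$, only permits the bound $R_A\to 0$ at the required rate under the hypothesis $\alpha>\max\{1-\tfrac{d}{2},0\}$, which is precisely the source of the lower threshold on $\alpha$ in the statement. A secondary difficulty, specific to $d\geq 2$, is that $\mathcal{J}_n$ is localized only in the single variable $x_1$, so the nonlocal commutator estimates must be performed while leaving the $d-1$ transverse directions untouched; it is exactly here that the multivariable expansion of $D^{\alpha}$ developed in Proposition~\ref{fractionalDeriv} becomes indispensable.
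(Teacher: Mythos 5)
Your proposal uses the same initial data sequence and the same mass/energy-gap computation as the paper (the identity $f'(1)=0$ and the sign of $f''(1)$ are correct), but from there you take a genuinely different route: a Bona--Souganidis--Strauss/de~Bouard type truncated virial $\mathcal{J}_n(t)=\int\psi_A(x_1-y_{n,1}(t))\,u_n^2$, together with a modulation decomposition that adjusts \emph{both} the speed $\widetilde c_n(t)$ and the translation $y_n(t)$. The paper instead modulates only the translation $z(t)$, keeps the orthogonality conditions $\epsilon\perp\partial_{x_j}Q$, and uses a Combet/Martel--Merle style functional $\mathcal{J}_A(t)=\int\epsilon(t)\,F_A$, where $F_A$ is the truncated $x_1$-antiderivative of $\Lambda Q+\beta\chi_0$ and $\chi_0$ is the eigenfunction of $L=D^\alpha+1-Q^{m-1}$ for its unique negative eigenvalue $-\lambda_0$; the derivative computation then produces the single non-vanishing leading term $\beta\lambda_0\int\epsilon\,\chi_0\,\sigma_A$, which is bounded away from zero via the coercivity of $L$ on $\{\chi_0,\partial_{x_j}Q\}^\perp$ and the conserved energy gap. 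So the comparison is not minor: the two schemes use different virial functionals, different orthogonality conditions, and a different ``unstable direction.''

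More importantly, as written your sketch has a genuine gap at its core. The inequality $\frac{d}{dt}\mathcal{J}_n(t)\leq-\eta\,(E[Q_c]-E[u_{0,n}])+\mathcal{E}_n(t)+R_A(t)$ is asserted but not derived, and the mechanism you offer --- ``a definite, non-vanishing projection of $\epsilon_n$ onto the unstable scaling mode $\chi=Q_c+\tfrac2d\,x\cdot\nabla Q_c$'' --- is not the right object. The scaling direction $\chi$ (equivalently, a multiple of $\Lambda Q$ in the paper's notation) is \emph{not} the eigenfunction of $L$ at the negative eigenvalue, and it is the latter, $\chi_0$, which makes the argument close: one needs the quantitative coercivity $(L\epsilon,\epsilon)\geq k_1\|\epsilon\|_{H^{\alpha/2}}^2-k_2|(\epsilon,\chi_0)|^2$ on $\{\partial_{x_j}Q\}^\perp$ together with the conserved identity $(L\epsilon,\epsilon)=-\delta_0-2K[\epsilon]$ to conclude that $|(\epsilon,\chi_0)|^2\geq\delta_0/k_2+\cdots$ uniformly in time. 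Your proposal never produces the pairing $(\epsilon,\chi_0)$ in the virial derivative, and without the spectral input on $L$ it is unclear why the quadratic-in-$\epsilon$ piece of $\dot{\mathcal{J}}_n$ should have a definite sign in the supercritical regime (the localized-energy quadratic form generated by the $\psi_A$-virial is simply a different bilinear form from $L$). A second issue is the attribution of the threshold $\alpha>\max\{1-\tfrac d2,0\}$: in the paper this threshold arises from the requirement $\langle x_1\rangle Q,\,\langle x_1\rangle\chi_0\in L^2(\mathbb{R}^d)$ (which is what permits the $\langle x_1\rangle/A$ truncation errors), not from a commutator estimate on $[D^\alpha,\psi_A]$; your sketch claims the opposite but offers no computation. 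Finally, modulating $\widetilde c_n(t)$ introduces additional terms in the $\epsilon$-equation and in the coercivity analysis that your proposal does not handle, whereas the paper deliberately avoids them by fixing the speed $c=1$ and modulating only translation.
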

The proof of Theorem \ref{InstMainThm} uses the explicit sequence \eqref{defseqe} of the  initial data to obtain the contradiction to the stability of $Q_c$ in the $L^2$-supercritical setting. The reason we use that form in \eqref{defseqe} is to avoid unconditional global existence of solutions $u_n(t)$. Overall, our proof combines several techniques such as truncation, modulation, Weinstein's functional, linearized equation, among others, which can be useful in other settings of dispersive equations involving nonlocal operators. (We note that in our proof we don't need to use monotonicity).

The technical condition on the dispersion $\max\{1-\frac{d}{2},0\}<\alpha<2$ in Theorem \ref{InstMainThm} is only relevant in dimension $d=1$, where it reduces to  $\frac{1}{2}<\alpha<2$. Note that for any dimension $d\geq 2$, Theorem \ref{InstMainThm} works for the full range of the dispersion $0<\alpha<2$, and integer power $m$ with $\frac{2\alpha}{d}+1<m<m_{\ast}$. The restriction $\alpha>1-\frac{d}{2}$ is technical, and it appears to assure that $x_1Q\in L^2(\mathbb{R}^d)$.

The main idea of the proof of Theorem \ref{InstMainThm} is inspired by the instability results in \cite{FHR2019b} obtained for Zakharov-Kuznetsov equation in the $L^2$-supercritical setting: $\alpha = 2$, $d\geq 2$, $s_c>0$ in \eqref{EQ:fKdV} (see also Combet \cite{Combet2010}). 
\footnote{Originally, the instability in the supercritical ZK equation was obtained by de Bouard \cite{Bouard1996} following the ideas of  Bona-Souganidis-Strauss \cite{BonaSouganidisStrauss1987} instability in the one-dimensional generalized KdV setting.} 

However, in contrast with the previous works on gKdV or gZK, in our case the ground state solution of \eqref{EQ:fKdV} has a spatial decay of polynomial order (not of exponential decay), see Theorem \ref{existTHR} below. This brings several additional difficulties to obtain instability as our estimates require tracking a precise connection of polynomial decay with fractional derivatives. A key {\it new} ingredient is the deduction of regularity and spatial decay properties of the unique positive radial ground state solution $Q$ (with speed $c=1$) when $m$ is an integer. More precisely, for any dimension $d\geq 1$, $0<\alpha<2$, and $m>1$ integer, we show (see Lemma \ref{LemmaQprop}) that 
\begin{equation}\label{decayQIn}
|\partial^{\beta}Q(x)|\lesssim (1+|x|)^{-(\alpha+d)-|\beta|},
\end{equation}
for any multi-index $\beta$. This result is obtained by a careful analysis of the equation \eqref{NLEQ}, and the distributional kernel arising from it, see the proof of Lemma \ref{LemmaQprop} below and the proof of Lemmas \ref{decayQm}, \ref{conmident} and \ref{kernDecay} in Appendix. We remark that the arguments developed here are different from those used by Kenig, Martel, and Robbiano \cite[Proposition 1]{KenigMartRobb2011} for the case $d=1$, $1<\alpha<2$, $m=2\alpha+1$ in \eqref{NLEQ}, and those of Li and Bona \cite{LiBona1997}. We also note that in the case $d=1$, $1<\alpha<2$, $m=3$, the estimate \eqref{decayQIn} is the same as the one obtained recently by 
Eychenne and Valet \cite{EychenneValet2022}, who for that specific case deduced a more detailed asymptotic expansion of the ground state $Q$ as $|x|\to \infty$; that approach only applicable in 1d. Our estimate \eqref{decayQIn} covers all integers $m>1$ and also works in higher dimensions. We believe that our arguments in Lemma \ref{LemmaQprop} can be adapted to the case when $m$ is fractional  and to more general types of dispersion given by nonlocal operators. Moreover, as a further consequence of the proof of Lemma \ref{LemmaQprop}, we obtain several decay properties for the radial symmetric eigenfunction associated to the single negative eigenvalue of the operator \eqref{linearizedop}, see Lemma \ref{LemmaChiprop}.

\begin{remark}
The currently available local well-posedness theory 
for dimension $d\geq 1$, dispersion $\alpha>0$, and nonlinearity power $m > 1$, 
does not always guarantee
the existence of solutions of \eqref{EQ:fKdV} for arbitrary initial data in $H^{\frac{\alpha}{2}}(\mathbb{R}^d)$ (unlike the $L^2$-subcritical case of gKdV). However, in the case of the sequence $\{u_{0,n}\}$ given by \eqref{defseqe}, we can always obtain the existence of solutions of \eqref{EQ:fKdV} with the initial condition \eqref{defseqe}. This is a consequence of Lemma \ref{LemmaQprop}, which implies that the sequence $\{u_{0,n}\}\in H^{s}(\mathbb{R}^d)$ for all $s\in \mathbb{R}$, thus, we can use classical methods of local well-posedness to assure the existence of a solution $u_n(t)$ of \eqref{EQ:fKdV} with the initial condition $u_{0,n}$.  
\end{remark}

\begin{remark} 
Our proof of instability strongly depends on the fact that $m>1$ is an integer. It would be interesting to extend Theorem \ref{InstMainThm} to the case of a non-integer $m$, that is, the equation \eqref{EQ:fKdV2}.
\end{remark}

\begin{remark}\label{remarkinsta bility}
This is the first result of soliton instability (in the sense of Definition \ref{orbInstabdef}) for the fractional generalized KdV equation \eqref{EQ:fKdV} for $\alpha<2$, where we provide an explicit example of the initial conditions leading to the instability (thus, a truly nonlinear behavior: starting as close as possible to the soliton and escaping its orbit up to the shifts as far as possible in finite time; we show that via a constructive proof). In one dimension $d=1$, in the fractional setting the statements concerning {\it spectral} instability of solitons can be found in  \cite{Lin2008,KapitulaStefanov2014,Angulo2018,LinZeng2022}. We note that some statements about soliton instability are given in \cite[Theorem 10.19]{Angulo2009} with some ideas of the proofs outlined in \cite{Angulo2003}, however, that only shows how to verify the sign (positive in this case) of the VK (Vakhitov-Kolokolov) index, which is insufficient for the full proof of nonlinear instability and as mentioned in \cite{VWW2020} one has to be especially careful in the fractional case due to the slow decay of the tails of the ground states, and hence, more steps would be required to obtain the instability via those methods. The proof of Theorem \ref{InstMainThm} is given via a different, constructive approach (as in \cite{FHR2019a, FHR2019b} for the local dispersion equations such as the gKdV and gZK equations) for the $L^2$-supercritical fractional gKdV equation and its generalization to higher dimensions, with $m$ being an integer in \eqref{EQ:fKdV}, the only restriction in this paper and to keep things clearer for the reader (we do mention the non-integer $m$ case in the appropriately modified gKdV equation \eqref{EQ:fKdV2}). 
\end{remark}

The paper is organized as follows. In Section \ref{sectionPrelimDecay}, we review the ground state and deduce some key properties concerning spatial decay and regularity of solutions of \eqref{EQ:groundState}. 
The orbital stability result of Theorem \ref{OrbstabilThm} is proved in Section \ref{S:OrbitalS}. There, we use a concentration-compactness approach for a constrained variational problem and show the stability of the minimizers of that problem (Theorem \ref{stabilTheorem-2}). Then in Lemma \ref{caraGround} we observe that these minimizers are the same minimizers as for the unconstrained problem of the Gagliardo-Nirenberg inequality (as discussed in Section \ref{sectionPrelimDecay}), which are the ground states $Q_c$. 
The next two sections deal with the deduction of instability of solitary wave solutions of fKdV, i.e., the proof of Theorem \ref{InstMainThm}. In Section \ref{InstaPrelimC}, we first review useful properties of the linearized operator $L$ in \eqref{linearizedop} as well as some properties of the eigenfunction associated to the unique negative eigenvalue of $L$. We then deduce the modulation theory for solutions $u$ of \eqref{EQ:fKdV} close to $Q$, obtain some virial type estimates and introduce a virial-type functional \eqref{Jfunct}, which will be the main tool in showing the instability of traveling solitary waves in the supercritical case. Section \ref{SectInstabProof} collects the results of the previous sections to prove the instability  Theorem \ref{InstMainThm}, where we use the explicit sequence \eqref{defseqe}. We conclude with Appendix, where we establish the proof of Lemmas \ref{decayQm}, \ref{conmident} and \ref{kernDecay}, which are fundamental in obtaining the spatial decay properties of $Q$ in Lemma \ref{LemmaQprop}.


{\bf Notation.}
Given two positive numbers $a$ and $b$,  $a\lesssim b$ means that there exists a positive constant $c>0$ such that $a\leq c b$. We write $a\sim b$ to symbolize that $a\lesssim b$ and $b\lesssim a$. We will use the standard multi-index notation, $\beta=(\beta_1,\dots,\beta_d) \in (\mathbb{Z}\cup\{0\})^d$, $\partial^{\beta}=\partial^{\beta_1}_{x_1}\cdots \partial_{x_d}^{\beta_d}$, $|\beta|=\sum_{j=1}^d \beta_j$,  and $\gamma \leq \beta$ if $\gamma_j \leq \beta_j$ for all $j=1,\dots,d$. $[A,B]$ denotes the commutator operator associated to $A$ and $B$, i.e.,
$$[A,B]=AB-BA.$$
We define $\langle x\rangle=(1+|x|^2)^{\frac{1}{2}}$, $x\in \mathbb{R}^d$. $C^{\infty}_c(\mathbb{R}^d)$ denotes the set of smooth functions with compact support and $\mathcal{S}(\mathbb{R}^d)$ denotes the Schwartz class functions. Given $s\in \mathbb{R}$, the Bessel operator $J^s$ is defined via the Fourier transform by the multiplier $\langle \xi \rangle^{s}=(1+|\xi|^2)^{\frac{s}{2}}$. For arbitrary $s>0$, the Riesz potential $D^s$ is defined by the Fourier multiplier $|\xi|^{s}$.

{\bf Acknowledgments.} The research of both authors was partially funded by the NSF grants DMS-1927258 and DMS-2055130 (PI: S. Roudenko).

\section{Preliminaries on the ground state} 
\label{sectionPrelimDecay}

We consider $0<\alpha <2$ and  
$1<m<m_{\ast}$, where
\begin{equation}\label{defcritm}
m_{\ast}=\left\{
\begin{aligned}
&\frac{d+\alpha}{d-\alpha},  \hspace{.3cm} \mbox{if} \quad 0<\alpha<d, \\
& +\infty,    \hspace{.5cm}   \mbox{if} \quad \alpha \geq d. 
\end{aligned}
\right.
\end{equation}
This range of parameters ensures that the equation \eqref{EQ:fKdV} is energy-subcritical. This also guarantees that the following nonlinear elliptic equation
\begin{equation}\label{NLEQ}
 Q+D^{\alpha}Q-\tfrac{1}{m}Q^{m}=0,
\end{equation}
admits non-trivial solutions in $H^{\frac{\alpha}2}(\mathbb R^d)$, for example, via Pohozhaev identities, see Lemma \ref{PHident} and further details in \cite{OscSvetKai2021}  
(here, we are only interested in real-valued and more precisely positive solutions).\footnote{In the context of \eqref{EQ:fKdV2}, instead of the equation \eqref{NLEQ} one considers 
$ Q+D^{\alpha}Q-\tfrac{1}{m}|Q|^{m-1} Q=0$,
and their non-trivial real-valued $H^{\frac{\alpha}2} (\mathbb R^d)$ solutions.}

We recall the following fractional Gagliardo-Nirenberg inequality 
(\cite[p.168]{BL1976}, \cite[Theorem 1.3.7]{Caz-book}, \cite{FLS2016}), which shows that any $f \in H^{\frac{\alpha}2} (\mathbb R^d)$ also belongs to $L^{m+1}(\mathbb R^d)$ by interpolation:
\begin{equation}\label{GNineq}
   \|f\|_{L^{m+1}(\mathbb{R}^d)}^{m+1} \leq C_{GN} \|D^{\frac{\alpha}{2}}f\|_{L^2(\mathbb{R}^d)}^{\frac{d(m-1)}{\alpha}}\|f\|_{L^2(\mathbb{R}^d)}^{(m+1)-\frac{d(m-1)}{\alpha}}.
\end{equation}
The sharp constant $C_{GN}$ is obtained from minimizing 
the Weinstein functional 
\begin{equation}\label{Wfunctional}
   \mathcal{W}(u) \stackrel{def}{=}\frac{\| D^{\frac{\alpha}{2}}u\|_{L^2(\mathbb{R}^d)}^{\frac{d(m-1)}{\alpha}}\|u\|_{L^2(\mathbb{R}^d)}^{(m+1)-\frac{d(m-1)}{\alpha}}}{\|u\|_{L^{m+1}(\mathbb{R}^d)}^{m+1}}
\end{equation}
over $u\in H^{\frac{\alpha}{2}}(\mathbb{R}^d) \setminus \{0 \}$. If $Q$ is such a minimizer, i.e., $\mathcal W (Q) = \inf \{ \mathcal{W}(u): u \in H^{\frac{\alpha}{2}} \setminus \{0\} \}$, which can also be chosen non-negative since $\mathcal W(|u|) \leq \mathcal W (u)$, then the sharp constant in the Gagliardo-Nirenberg inequality \eqref{GNineq} is $C_{GN}^{-1} = \mathcal W(Q)$. 

The existence of minimizers can be obtained via the concentration-compactness arguments, see \cite{W87}, \cite{ABS1997}, or via symmetrization and Strauss radial lemma \cite[Theorem B]{W83}, \cite[Proposition 3.1]{FLS2016} (see also \cite[Lemma 4.1]{AR2022} in the context of nonlocal convolution nonlinearity instead of nonlocal dispersion). 

Note that such minimizers $Q \in H^{\frac{\alpha}{2}}(\mathbb R^d)$ of $\mathcal W (u)$ solve \eqref{NLEQ}, this guarantees the existence of non-negative solutions to the nonlinear elliptic equation \eqref{NLEQ}. Moreover, every minimizer $u \in H^{\frac{\alpha}{2}}(\mathbb{R}^d)$ of the Weinstein functional $\mathcal W$ is of the form $u = \beta \, Q(\gamma\, (\cdot+z))$ with some $\beta\in \mathbb{C}\setminus\{0\}$, $\gamma>0$ and $z\in \mathbb{R}^d$.
We also mention that several other constrained variational problems are equivalent to the unconstrained problem of minimizing the functional $\mathcal W$ on $H^{\frac{\alpha}2}(\mathbb{R}^d) \setminus \{0\} $. For example, minimizing the Hamiltonian in $H^{\frac{\alpha}2}$ with a fixed mass, a variation of this approach is used below to prove orbital stability (see Section \ref{S:OrbitalS}).
The following properties of such minimizers hold.  

\begin{proposition}[\cite{FLS2016}]\label{existTHR}
Let $0<\alpha<2$ and $1<m<m_{\ast}$, where $m_{\ast}$ is defined in \eqref{defcritm}. If a non-trivial non-negative $Q \in H^{\frac{\alpha}2}(\mathbb R^d)$ satisfies \eqref{NLEQ}, then there exists an $x_0\in \mathbb{R}^d$ such that $Q(\cdot-x_0)$ is radial, positive, and strictly decreasing in $|x-x_0|$. Additionally, the function $Q$ belongs to $H^{\alpha+1}(\mathbb{R}^d) \cap  C^{\infty}(\mathbb{R}^d)$ and satisfies
\begin{equation}\label{poldecayGS}
\frac{C_1}{1+|x|^{d+\alpha}} \leq Q(x) \leq \frac{C_2}{1+|x|^{d+\alpha}}
\end{equation}
for all $x \in \mathbb{R}^d$ with some constants $C_2 \geq C_1>0$ (which can depend on $\alpha$, $m$, $Q$). 
\end{proposition}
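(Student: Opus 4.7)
The plan is to establish the three conclusions (symmetry, regularity, two-sided polynomial decay) in that order, treating them as a standard elliptic-regularity-and-decay program for a nonlocal semilinear equation, of the kind carried out in \cite{FLS2016}.

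\emph{Symmetry and positivity.} Since the Weinstein functional $\mathcal W$ from \eqref{Wfunctional} is invariant under $L^2$ and $L^{m+1}$ rearrangement and satisfies the Pólya–Szegő inequality for the fractional Laplacian (i.e.\ $\|D^{\alpha/2}f^\ast\|_{L^2}\le \|D^{\alpha/2}f\|_{L^2}$), any nonnegative minimizer $Q$ can be replaced by its symmetric decreasing rearrangement $Q^\ast$, which is again a minimizer and hence again solves \eqref{NLEQ}. Equality in the fractional Pólya–Szegő inequality then forces $Q=Q^\ast(\cdot-x_0)$ for some $x_0\in\mathbb R^d$. Strict radial monotonicity and strict positivity follow from the maximum principle for $1+D^\alpha$: rewriting \eqref{NLEQ} as $(1+D^\alpha)Q=\tfrac1m Q^m\ge 0$ and using the positivity of the Bessel-type kernel $K$ of $(1+D^\alpha)^{-1}$ (see the decay step below) gives $Q=K\ast(\tfrac1m Q^m)>0$ wherever $Q^m\not\equiv 0$, and strict monotonicity comes from strict positivity of $K$ together with strict monotonicity of $K$ in $|x|$.

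\emph{Regularity.} Write the equation in integral form $Q=(1+D^\alpha)^{-1}\bigl(\tfrac1m Q^m\bigr)$. Since $(1+D^\alpha)^{-1}$ has symbol $(1+|\xi|^\alpha)^{-1}$, it maps $H^s(\mathbb R^d)$ into $H^{s+\alpha}(\mathbb R^d)$. A standard bootstrap then proceeds by combining this gain of $\alpha$ derivatives with the tame product/Moser-type estimate
\begin{equation*}
\|Q^m\|_{H^s}\lesssim \|Q\|_{L^\infty}^{m-1}\|Q\|_{H^s},
\end{equation*}
valid once $Q\in L^\infty$ (which follows from $H^{\alpha/2}\hookrightarrow L^{m+1}$ and Moser iteration for $(1+D^\alpha)$, or directly from the $L^\infty$ estimate for the kernel $K\in L^1\cap L^p$ for all $p<\frac{d}{d-\alpha}$). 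Iterating $H^{\alpha/2}\Rightarrow H^{3\alpha/2}\Rightarrow\cdots$ yields $Q\in H^s$ for all $s\ge 0$, in particular $Q\in H^{\alpha+1}\cap C^\infty$ (for integer $m$ the product stays smooth; for noninteger $m$ one uses $Q>0$ from the symmetry step).

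\emph{Two-sided polynomial decay.} This is the main obstacle. The key input is the sharp asymptotics of the kernel $K$ of $(1+D^\alpha)^{-1}$: $K\ge 0$, $K\in L^1(\mathbb R^d)$, and
\begin{equation*}
K(x)\;\sim\; c_{d,\alpha}\,|x|^{-(d+\alpha)}\quad\text{as }|x|\to\infty,\qquad K(x)\lesssim (1+|x|)^{-(d+\alpha)}\text{ for all }x,
\end{equation*}
which is a standard computation via the subordination formula $(1+D^\alpha)^{-1}=\int_0^\infty e^{-t}e^{-tD^\alpha}\,dt$ and the known decay of the fractional heat kernel $e^{-tD^\alpha}(x)$. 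The upper bound in \eqref{poldecayGS} is then obtained by the usual iterative scheme: starting from the rough decay $Q(x)\to 0$ (a consequence of $Q\in H^{\alpha/2}$ radial and Strauss' lemma), plug into $Q=K\ast(\tfrac1m Q^m)$, and use that $Q^m$ decays faster than $Q$ because $m>1$, so after finitely many iterations the convolution is dominated by the kernel decay and one reaches $Q(x)\lesssim (1+|x|)^{-(d+\alpha)}$. For the matching lower bound, use positivity of $K$ and of $Q^m$: for $|x|$ large,
\begin{equation*}
Q(x)=\tfrac1m\!\int_{\mathbb R^d} K(x-y)\,Q(y)^m\,dy \;\gtrsim\; |x|^{-(d+\alpha)}\!\int_{|y|\le R} Q(y)^m\,dy,
\end{equation*}
the last integral being a strictly positive constant, giving $Q(x)\gtrsim |x|^{-(d+\alpha)}$.

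The step I expect to require the most care is the sharp upper decay bound: one has to avoid losing a logarithmic factor in the iteration and has to control the convolution tails near the origin simultaneously with those at infinity, using both the $L^1$ integrability and the pointwise $|x|^{-(d+\alpha)}$ asymptotics of $K$. Everything else (rearrangement, bootstrap, positivity lower bound) is reasonably mechanical once the kernel $K$ is in hand.
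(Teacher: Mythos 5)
The paper gives no proof of this proposition; it quotes it directly from \cite{FLS2016}, so the comparison is with that reference and with the closely related Lemma~\ref{LemmaQprop}. Your regularity bootstrap (gain of $\alpha$ derivatives from $(1+D^{\alpha})^{-1}$, tame product estimate $\|Q^m\|_{H^s}\lesssim \|Q\|_{L^\infty}^{m-1}\|Q\|_{H^s}$, iteration) and your two-sided decay argument (kernel asymptotics $K(x)\sim c_{d,\alpha}|x|^{-(d+\alpha)}$, convolution iteration for the upper bound, positivity of $K$ against a compact piece of $Q^m$ for the lower bound) are in substance the arguments of \cite{FLS2016}, and indeed the same mechanism reappears in the paper's Lemma~\ref{LemmaQprop} via Lemma~\ref{lemmadeca}. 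Those parts of your write-up are sound.

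The gap is in the symmetry step. The proposition is asserted for \emph{every} non-trivial non-negative $H^{\alpha/2}$ solution of \eqref{NLEQ}, whereas your argument (replace $Q$ by its symmetric decreasing rearrangement $Q^{\ast}$ and invoke equality in the fractional P\'olya--Szeg\H{o} inequality) only applies when $Q$ is a \emph{minimizer} of the Weinstein functional $\mathcal{W}$: for a general critical point there is no rearrangement inequality to exploit, and nothing forces $Q^{\ast}$ to solve the Euler--Lagrange equation. In \cite{FLS2016} (and the related works of Felmer--Quaas--Tan and Chen--Li--Ou) radiality and strict monotonicity of arbitrary positive solutions are obtained by a moving-planes argument adapted to the nonlocal operator $D^{\alpha}$, not by rearrangement. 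The distinction matters precisely in this paper: the instability analysis lives in the supercritical range $m>\frac{2\alpha}{d}+1$, where the ground state is \emph{not} a constrained minimizer of the energy at fixed mass, so one cannot quietly reduce to the variational case. A secondary, smaller point: even restricted to minimizers, the step ``equality forces $Q=Q^{\ast}(\cdot-x_0)$'' is the nonlocal rigidity theorem of Frank and Seiringer for the fractional P\'olya--Szeg\H{o} inequality; it is true but not elementary and should be cited rather than treated as routine.
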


We now have positive radial $H^{\frac{\alpha}2}(\mathbb R^d)$ solutions to the \eqref{NLEQ}, which are called the {\it ground state} solutions, the next 
question to address  is the uniqueness of such solutions. 
The uniqueness is proved in \cite{FL2013} for the $1d$ case and in \cite{FLS2016} for any dimension. In fact, in \cite{FLS2016} a more general result is proved, via defining a weaker version of a ground state. More precisely, the minimizers that are discussed above are the global minimizers of the Weinstein functional. This notion can be relaxed to the local minimizers via Morse index, in the following sense. 

\begin{definition}(Weaker version of a ground state, \cite[Definition 3.2]{FLS2016})
For a real-valued solution  $Q \in H^{\frac{\alpha}{2}}(\mathbb R^d)$ of \eqref{NLEQ}, define the corresponding linearized operator $L$ acting on $L^2(\mathbb{R}^d)$ by
\begin{equation}\label{linearizedop}
L=D^{\alpha}+1-|Q|^{m-1}.
\end{equation}
Then a non-trivial non-negative $Q$ is referred to as (a weaker version of) a {\it ground state} solution of the equation \eqref{NLEQ} if $L$ has Morse index equal to $1$, i.e., $L$ has exactly one strictly negative eigenvalue (counting multiplicity).
\end{definition}
If $Q \geq 0$ is a local minimizer of $\mathcal W(u)$, then $L$ has Morse index 1 (see \cite{W85, FLS2016}), and thus, according to the above definition would be called a ground state. While the notion of a ground state now is more general, the uniqueness even in this more general case holds, which was shown in \cite{FLS2016} by proving the nondegeneracy of the operator $L$, namely, that the ker $L$ is spanned only by $\{\partial_{x_j} Q\}_{j=1, \dots, d}$.  

Observe that the ground state $Q$ in the equation \eqref{NLEQ} is $Q \equiv Q_1$, 
the uniqueness of the ground state solution to the rescaled version 
\eqref{EQ:groundState} follows (up to translation). 

For convenience we state the sharp constant of \eqref{GNineq} in terms of $Q_c$, the ground state solution of \eqref{EQ:groundState} with $c>0$:
\begin{equation}
C_{GN}=\frac{m\alpha(m+1) \, c^{\frac{2\alpha-d(m-1)}{2\alpha}}}{d^{\frac{d(m-1)}{2\alpha}} (m-1)^{\frac{d(m-1)}{2\alpha}} \big(2d-(d-\alpha)(m+1)\big)^{\frac{2\alpha-d(m-1)}{2\alpha}}} \frac{1}{\|Q_c\|_{L^2(\mathbb{R}^d)}^{m-1}}.
\end{equation}

In what follows we need some additional spatial decay and regularity properties of the ground state, which we prove next. 

\begin{lemma}\label{LemmaQprop}
Assume $0< \alpha<2$ and $1<m<m_{\ast}$ is an integer, where $m_{\ast}$ is defined in \eqref{defcritm}. Let 
$Q$ be a positive radial $H^{\frac{\alpha}2}$ solution of \eqref{NLEQ}. 
Then $\displaystyle Q\in H^{\infty}(\mathbb{R}^d) \equiv \bigcap\limits_{s>0} H^{s}(\mathbb{R}^d)$. Moreover, for any multi-index $\eta$,
\begin{equation}\label{decderQ}
    |\partial^{\eta} Q(x)|\lesssim \langle x \rangle^{-\alpha-d-|\eta|}.
\end{equation}
\end{lemma}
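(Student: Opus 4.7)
My plan is to handle the two assertions separately: the global regularity $Q\in H^{\infty}$ by a bootstrap, and the pointwise derivative decay by induction on $|\eta|$ using the integral representation associated to $(1+D^{\alpha})^{-1}$.

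For the regularity, I rewrite \eqref{NLEQ} as $Q=\frac{1}{m}(1+D^{\alpha})^{-1}(Q^m)$. Since the Fourier symbol $(1+|\xi|^{\alpha})^{-1}$ is comparable to $\langle\xi\rangle^{-\alpha}$, the operator $(1+D^{\alpha})^{-1}$ maps $H^s$ into $H^{s+\alpha}$. Proposition \ref{existTHR} places $Q$ already in $H^{\alpha+1}\cap C^{\infty}(\mathbb{R}^d)$; since $m$ is an integer, the Sobolev algebra property (applicable once $s>d/2$) propagates $Q\in H^s$ to $Q^m\in H^s$ and then to $Q\in H^{s+\alpha}$ via the fixed-point formula. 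Iterating gives $Q\in H^{\infty}(\mathbb{R}^d)$.

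For the decay, let $K$ denote the Green's function of $1+D^{\alpha}$, so that $Q=\frac{1}{m}K\ast Q^m$. The Appendix Lemma \ref{kernDecay} provides, for every multi-index $\beta$, that $|\partial^{\beta}K(x)|\lesssim\langle x\rangle^{-d-\alpha-|\beta|}$ for $|x|\ge 1$, with a controlled local singularity at the origin of order $|x|^{-d+\alpha-|\beta|}$. I then proceed by induction on $|\eta|$; the base case $|\eta|=0$ is Proposition \ref{existTHR}. Assuming \eqref{decderQ} for every $|\beta|\le k$, fix $|\eta|=k+1$ and use Leibniz together with Lemma \ref{decayQm} to expand
\begin{equation*}
\partial^{\eta}(Q^m)=m\,Q^{m-1}\,\partial^{\eta}Q+R_{\eta},
\end{equation*}
where each summand of $R_{\eta}$ is a product $\prod_{\ell}\partial^{\beta_\ell}Q$ with $\sum_\ell|\beta_\ell|=|\eta|$ and every $|\beta_\ell|\le k$. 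The induction hypothesis combined with Proposition \ref{existTHR} gives $|R_{\eta}(y)|\lesssim \langle y\rangle^{-m(d+\alpha)-|\eta|}$. Writing $\partial^{\eta}Q=\frac{1}{m}K\ast\partial^{\eta}(Q^m)$ and splitting the convolution dyadically over $\{|y|\le|x|/2\}$, $\{|x-y|\le|x|/2\}$ and $\{|y|,|x-y|\ge|x|/2\}$, the kernel decay furnishes the required $\langle x\rangle^{-d-\alpha-|\eta|}$ bound for the contribution of $R_{\eta}$. The potentially self-referential term $K\ast(Q^{m-1}\partial^{\eta}Q)$ is addressed via Lemma \ref{conmident}: commuting the polynomial weight $\langle x\rangle^{d+\alpha+|\eta|}$ past $(1+D^{\alpha})^{-1}$ rewrites $\langle x\rangle^{d+\alpha+|\eta|}\partial^{\eta}Q$ as a closed linear combination of lower-weighted derivatives, already controlled by the induction hypothesis, plus a term whose coefficient (carrying the fast-decaying factor $Q^{m-1}$) can be absorbed into the left-hand side on a large annulus.

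The main obstacle is the tension between the singularity of $\partial^{\beta}K$ at the origin, which ceases to be integrable once $|\beta|>\alpha$, and the merely polynomial tail of $Q$. This is precisely the reason the argument must alternate between placing derivatives on the kernel versus on $Q^m$, and why the weighted $L^{\infty}$ bounds have to be closed through the commutator identity of Lemma \ref{conmident} rather than by brute force. Once this bookkeeping is in place, the induction on $|\eta|$ propagates \eqref{decderQ} to every multi-index, completing the proof.
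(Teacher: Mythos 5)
Your bootstrap for $Q\in H^\infty$ is fine in spirit, though the paper uses the fractional Leibniz rule with the $L^\infty$ control of $Q$ rather than the $s>d/2$ Sobolev algebra property, which avoids any dependence on whether $\alpha+1>d/2$ at the start of the iteration.

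The decay argument, however, has a genuine gap. You claim that Lemma~\ref{kernDecay} supplies $|\partial^\beta K(x)|\lesssim\langle x\rangle^{-d-\alpha-|\beta|}$ for the Green's function $K$ of $1+D^\alpha$, but Lemma~\ref{kernDecay} concerns kernels of the form $\frac{D^{\alpha l-2\rho}\partial^\gamma}{(\lambda+D^\alpha)^{1+\rho}}P_\psi$ with $\rho\ge1$ and only gives decay of order $\langle x\rangle^{-\alpha-d}$; it says nothing about derivatives of $\widetilde G_\alpha=(1+D^\alpha)^{-1}\delta_0$. More seriously, even granting the kernel fact, your estimate for the contribution of $R_\eta$ in the convolution $K\ast R_\eta$ does not produce the extra $|\eta|$ powers of decay. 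In the region $\{|y|\le|x|/2\}$ one only gets $|K(x-y)|\lesssim|x|^{-d-\alpha}$ and $\int|R_\eta|\lesssim1$, so that piece is of size $\langle x\rangle^{-d-\alpha}$ regardless of how rapidly $R_\eta$ itself decays. Since in general $\int R_\eta\neq0$, Lemma~\ref{lemmadeca} even shows that $|x|^{d+\alpha}(K\ast R_\eta)(x)$ has a nonzero limit, so $\langle x\rangle^{-d-\alpha-|\eta|}$ decay for that term is simply false. The kernel caps the decay of any such convolution at its own rate $\langle x\rangle^{-d-\alpha}$; this is exactly why the paper multiplies by the monomial weight $x^\beta$ and commutes it past $\partial^\eta(1+D^\alpha)^{-1}$ via Lemma~\ref{conmident} (distributing the weight between the kernel and $Q^m$), then applies Lemma~\ref{kernDecay} to the low/high-frequency pieces and Lemma~\ref{decayQm} to the weighted derivatives of $Q^m$. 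Your alternative of absorbing the self-referential term by commuting $\langle x\rangle^{d+\alpha+|\eta|}$ past $(1+D^\alpha)^{-1}$ also does not follow from Lemma~\ref{conmident}, which only handles integer monomial weights $x^\beta$, not the non-polynomial weight $\langle x\rangle^{d+\alpha+|\eta|}$. As written, the decay part of the proof does not close.
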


\begin{proof}
We begin by claiming that if for some $s>0$, $Q\in H^{s}(\mathbb{R}^d)$, then $Q^{m}\in H^s(\mathbb{R}^d)$ for any integer $m\geq 1$. By the spatial decay properties of $Q$ in \eqref{poldecayGS}, we observe that it suffices to show
\begin{equation}\label{sestimQ}
\begin{aligned}
\|D^{s}(Q^m)\|_{L^2}  \lesssim \|Q\|_{L^{\infty}}^{m-1}\|D^sQ\|_{L^{2}}.
\end{aligned}
\end{equation}
To prove \eqref{sestimQ}, we will use the fractional Leibniz rule
\begin{equation}\label{fLR}
\begin{aligned}
\|D^s(f g)\|_{L^2}\lesssim \|D^s f\|_{L^2}\|g\|_{L^{\infty}}+\|f\|_{L^{\infty}}\|D^s g\|_{L^2}, 
\end{aligned}
\end{equation}
which for arbitrary $s>0$ is a consequence of the results in \cite[Theorem 1]{GrafakosOh2014}. Thus, the previous identity yields
\begin{equation*}
\begin{aligned}
\|D^s(Q^m)\|_{L^2}=\|D^s(Q Q^{m-1})\|_{L^2}\lesssim \|D^s Q\|_{L^2}\|Q\|^{m-1}_{L^{\infty}}+\|Q\|_{L^{\infty}}\|D^{s}(Q^{m-1})\|_{L^2}. 
\end{aligned}
\end{equation*}
Consequently, if $m\geq 2$, by writing $Q^{m-1}=Q Q^{m-2}$, we can iterate the argument above, using \eqref{fLR} in each step until we arrive at \eqref{sestimQ}.

Now, the equation \eqref{NLEQ} and the fact that $Q\in H^{\alpha+1}(\mathbb{R}^d)$ yield the following identity
\begin{equation}\label{eqSNLFS}
D^{\beta}Q=\frac{1}{m}(1+D^{\alpha})^{-1}D^{\beta}(Q^m)
\end{equation}
for any $\beta\geq 0$. Given that $\alpha>0$, the right-hand side of \eqref{eqSNLFS} is at least $\alpha$ derivatives smoother than its left-hand side. For example, if $Q\in H^{k\alpha}(\mathbb{R}^d)$ for some integer $k\geq 0$, the estimate \eqref{sestimQ} shows that $Q^{m}\in H^{k\alpha}(\mathbb{R}^d)$, and in turn \eqref{eqSNLFS} yields  $Q\in H^{(k+1)\alpha}(\mathbb{R}^d)$. Thus, the fact that $Q\in H^{\infty}(\mathbb{R}^d)$ follows by an inductive argument on $k\geq 1$ with $\beta = k \alpha$ in \eqref{eqSNLFS}.

Next, we establish \eqref{decderQ}. It is enough to prove that for any integer $\kappa\geq 0$, it follows that
\begin{equation}
    |\partial^{\eta}Q(x)|\lesssim \langle x \rangle^{-\alpha-d-\kappa}\, \,  \text{ for all } \, \, |\eta|\geq \kappa.
\end{equation}
The above inequality is deduced by using an inductive argument on $\kappa \geq 0$ integer. We  start with the base case $\kappa=0$.

\underline{\bf Case $\kappa=0$}. We first recall the following lemma proved in \cite[Lemma C.3]{FLS2016}.
\begin{lemma}\label{lemmadeca}
Let $d\geq 1$, $K,f\in L^1(\mathbb{R}^d)$ such that $|K(x)|\lesssim |x|^{-r}$ and $|f(x)|\lesssim \langle x \rangle^{-r}$ for some $r>d$. Moreover, assume that
\begin{equation*}
    \begin{aligned}
    \lim_{|x|\to \infty}|x|^{r}K(x)=K_0 \, \, \text{ and }\, \, \lim_{|x|\to \infty} |x|^r f(x)=0.
    \end{aligned}
\end{equation*}
Then 
\begin{equation*}
    \lim_{|x|\to \infty}|x|^r(K\ast f)(x)=K_0\int_{\mathbb R^d} f(x)\, dx.
\end{equation*}
\end{lemma}

To continue with the proof, we come back to \eqref{eqSNLFS} and differentiate it to obtain 
    \begin{equation}\label{Decayderi}
        \begin{aligned}
        \partial^{\eta}Q=& \, \frac{1}{m}(1+D^{\alpha})^{-1}\Big(\sum_{\gamma_1+\gamma_2=\eta}c_{\gamma_1,\gamma_2}\partial^{\gamma_1}(Q^{m-1})\partial^{\gamma_2}Q\Big)\\
        =& \, \frac{1}{m} \, \widetilde{G}_{\alpha}\ast \Big(\sum_{\gamma_1+\gamma_2=\eta}c_{\gamma_1,\gamma_2}\partial^{\gamma_1}(Q^{m-1})\partial^{\gamma_2}Q\Big),
        \end{aligned}
    \end{equation}
where $\widetilde{G}_{\alpha}\in L^{1}(\mathbb{R}^d)$ denotes the kernel representation of the operator $(1+D^{\alpha})^{-1}$, which satisfies
\begin{equation}\label{Decayderieq1}
    \lim_{|x|\to \infty}|x|^{d+\alpha}\widetilde{G}_{\alpha}(x)=c_{\alpha}
\end{equation}
for some $c_{\alpha}>0$. For a proof of the above properties of $\widetilde{G}_{\alpha}$, we refer to \cite[Lemma C.1]{FLS2016}. Now, if $|\eta|=1$, up to some constant, the right-hand side of \eqref{Decayderi} reduces to $Q^{m-1}\partial^{\eta}Q$, and this factor satisfies
\begin{equation}
    |(Q^{m-1}\partial^{\eta}Q)(x)|\lesssim |\partial^{\eta}Q(x)|\langle x \rangle^{-(d+\alpha)(m-1)},
\end{equation}
which goes to zero as $|x|\to \infty$ provided that $Q\in H^{\infty}(\mathbb{R}^d)$. Gathering these facts, we can apply Lemma \ref{lemmadeca} together with the identity \eqref{Decayderi} to deduce
\begin{equation}
    |\partial^{\eta}Q(x)|\lesssim \langle x\rangle^{-\alpha-d},
\end{equation}
whenever $|\eta|=1$, which is exactly \eqref{decderQ} for indices of order one. On the other hand, we notice that 
\begin{equation}
\begin{aligned}
\big|\sum_{\gamma_1+\gamma_2=\eta}c_{\gamma_1,\gamma_2}(\partial^{\gamma_1}(Q^{m-1})\partial^{\gamma_2}Q)(x) \big|\lesssim \sum_{\substack{\gamma_1+\gamma_2=\eta\\ \gamma_2 \neq \eta}}  
& |\partial^{\gamma_1}(Q^{m-1})(x)||(\partial^{\gamma_2}Q)(x)|\\
& +|(Q^{m-1})(x)||\partial^{\eta}Q(x)|.
\end{aligned}
\end{equation}
Consequently, by the previous estimate, the facts that $|\partial^{\eta'}(Q^{m-1})(x)|\to 0$ and $|\partial^{\eta'}Q(x)|\to 0$ as $|x|\to \infty$ for any multi-index $\eta'$, and arguing as in the proof of the case $|\eta|=1$ above, 
an inductive argument on the order of the multi-index $\beta$ gives  
the desired result.

\underline{\bf Case $\kappa\geq 1$}. We assume by inductive hypothesis that 
\begin{equation}\label{decayinduction}
    |\partial^{\eta}Q(x)|\lesssim \langle x \rangle^{-\alpha-d-\min\{|\eta|,\kappa-1\}},
\end{equation}
for any multi-index $\eta$. We need the following series of lemmas to continue.

\begin{lemma}\label{decayQm}
Let $m>1$ be an integer, $0<\alpha<2$, and assume that \eqref{decayinduction} holds for some integer $\kappa\geq 1$. Let $\widetilde{\gamma}, \widetilde{\beta}, \widetilde{\eta}$ be multi-indices with $|\widetilde{\beta}|\leq \kappa$ and such that
\begin{equation*}
    |\widetilde{\beta}|-|\widetilde{\gamma}|-|\widetilde{\eta}|-(m-1)\alpha-(m-1) d<0.
\end{equation*}
Then 
\begin{equation*}
    |\partial^{\widetilde{\gamma}}\big(x^{\widetilde{\beta}}\partial^{\widetilde{\eta}}(Q^m)\big)(x)|\lesssim H(x) \, \langle x \rangle^{-\alpha-d} \quad \mbox{for~~ any} \quad  x \in \mathbb{R}^d,
\end{equation*}
where $H(x)$ is a continuous, bounded function such that  $H(x)\to 0$ as $|x|\to \infty$.
\end{lemma}

\begin{lemma}\label{conmident}
Let $0<\alpha<2$, $\beta, \eta$ be multi-indices with $|\beta|=|\eta|\geq 1$. Assume that $\partial^{\beta_1'}(x^{\beta_2'}f)\in L^2(\mathbb{R}^d)$ for any
multi-indices $\beta_1'$, $\beta_2'$ such that  $0\leq |\beta_1'|, |\beta_2'|\leq |\beta|-1$. Then, the following identity holds (in the $L^2$-sense)
\begin{equation}\label{identdecay}
\begin{aligned}
\, [x^{\beta},\partial^{\eta}(1+D^{\alpha})^{-1}]f =
& \sum_{\substack{|\beta_1|+|\beta_2|=|\beta|\\ |\beta_1|\geq 1}}
\, \sum_{\substack{|\gamma_1|=2|\beta_1|\\ |\gamma_2|=|\beta_2|}}
\, \sum_{1\leq l\leq|\beta_1|}c_{\beta_1,\beta_2,\gamma_1,\gamma_2,l} \, \frac{D^{\alpha l-2|\beta_1|}\partial^{\gamma_1}}{(1+D^{\alpha})^{1+|\beta_1|}}\partial^{\gamma_2}(x^{\beta_2}f)\\
&+\sum_{\substack{0\leq |\beta_2|\leq |\beta|-1\\ |\gamma|=|\beta_2|}}c_{\beta_2,\gamma} \, \frac{1}{(1+D^{\alpha})}\partial^{\gamma}(x^{\beta_2}f)
\end{aligned}    
\end{equation}
for some universal constants $c_{\beta_1,\beta_2,\gamma_1,\gamma_2,l}, c_{\beta_2,\gamma}\in \mathbb{R}$.
\end{lemma}

We consider a function $\psi\in C^{\infty}_c(\mathbb{R}^d)$ with $0\leq \psi\leq 1$,  $\psi(\xi)=1$ if $|\xi|\leq 1$, and $\psi(\xi)=0$ if $|\xi|\geq 2$. We denote by $P_\psi$ the operator defined by $\widehat{P_{\psi}f}(\xi)=\psi(\xi)\widehat{f}(\xi)$ 
and by $(1-P_{\psi})$ the operator defined by the Fourier multiplier with the symbol $1-\psi(\xi)$.

\begin{lemma}\label{kernDecay}
Let $\lambda>0$, $0<\alpha<2$, $\rho\geq 1$, and $\mu>\max\{d-\alpha,0\}$. Consider $1\leq l\leq \rho$, and let $\gamma$ be a multi-index such that $|\gamma|=2\rho$. Define
\begin{equation*}
 K_{\rho,l,\gamma}(x)=\frac{D^{\alpha l-2\rho }\partial^{\gamma}}{(\lambda+D^{\alpha})^{1+\rho}}P_{\psi}\in S'(\mathbb{R}^d)
\end{equation*}
and
\begin{equation*}
\widetilde{K}_{\mu,\rho,l,\gamma}(x)=\frac{D^{\alpha l-2\rho }\partial^{\gamma}}{(\lambda+D^{\alpha})^{1+\rho}}D^{-\mu}(1-P_{\psi})\in S'(\mathbb{R}^d).
\end{equation*}
Then 
\begin{equation}\label{Kdecay1}
    |K_{\rho,l,\gamma}(x)|\lesssim \langle x \rangle^{-\alpha -d},
\end{equation}
and for all $a>0$,
\begin{equation}\label{Kdecay2}
    |\widetilde{K}_{\mu,\rho,l,\gamma}(x)|\lesssim \langle x \rangle^{-a} \quad \mbox{for~~ any} \quad x\in \mathbb{R}^d.
\end{equation}
\end{lemma}

The proof of Lemmas \ref{decayQm}, \ref{conmident} and \ref{kernDecay} is given in Appendix. 

We are now in a position to finish the proof of the case $\kappa\geq 1$. Let $\eta, \beta$ be two multi-indices with $|\beta|=\kappa$ and $|\eta|\geq \kappa$. Since $Q\in H^{\infty}(\mathbb{R}^d)$ solves \eqref{NLEQ}, we have the following pointwise identity
\begin{equation}\label{concludecayeq1}
\begin{aligned}
x^{\beta}\partial^{\eta}Q=\frac{x^{\beta}}{m}\Big(\frac{\partial^{\eta}}{(1+D^{\alpha})}\Big)Q^m.
\end{aligned}
\end{equation}
Writing $\eta=\eta_1+\eta_2$ with $|\eta_1|=|\beta|=\kappa$, we use Lemma \ref{conmident} to deduce 
\begin{equation}\label{concludecayeq2}
\begin{aligned}
x^{\beta}\Big(\frac{\partial^{\eta}}{(1+D^{\alpha})}\Big)Q^m=&[x^{\beta},\partial^{\eta_1}(1+D^{\alpha})^{-1}]\partial^{\eta_2}(Q^m)+\frac{\partial^{\eta_1}}{(1+D^{\alpha})}(x^{\beta}\partial^{\eta_2}Q^m)\\
=& \sum_{\substack{|\beta_1|+|\beta_2|=|\beta|\\ |\beta_1|\geq 1}}\sum_{\substack{|\gamma_1|=2|\beta_1|\\ |\gamma_2|=|\beta_2|}}\sum_{1\leq l\leq|\beta_1|}c_{\beta_1,\beta_2,\gamma_1,\gamma_1,l}\frac{D^{\alpha l-2|\beta_1|}\partial^{\gamma_1}}{(1+D^{\alpha})^{1+|\beta_1|}}\partial^{\gamma_2}(x^{\beta_2}\partial^{\eta_2}(Q^m))\\
&+\sum_{\substack{0\leq |\beta_2|\leq |\beta|\\ |\gamma|=|\beta_2|}}c_{\beta_2,\gamma}\frac{1}{(1+D^{\alpha})}\partial^{\gamma}(x^{\beta_2}\partial^{\eta_2}(Q^m)).
\end{aligned}    
\end{equation}
The validity of the above expression is justified by Lemma \ref{decayQm}, which assures the conditions in Lemma \ref{conmident}. Moreover, we can increase the number of the derivatives in the above expression to use the continuous Sobolev embedding  and Lemma \ref{decayQm} to conclude that the above identity holds pointwise for all $x\in \mathbb{R}^d$. We prove that the right-hand side of \eqref{concludecayeq2} has a decay of order $-\alpha-d$ on the spatial variable.

Since $|\gamma|=|\beta_2|$, $|\beta_2|\leq \kappa$, Lemma \ref{decayQm} yields
\begin{equation*}
    |\partial^{\gamma}(x^{\beta_2}\partial^{\eta_2}(Q^m))(x)|\lesssim \langle x \rangle^{-\alpha-d} 
\end{equation*}
with $||x|^{\alpha+d}\partial^{\gamma}(x^{\beta_2}\partial^{\eta_2}(Q^m))(x)|\to 0$ as $|x|\to \infty$. 
Then, recalling that $\widetilde{G}_{\alpha}$ denotes the kernel of the operator $(1+D^{\alpha})^{-1}$ (see \eqref{Decayderi}), we use Lemma \ref{lemmadeca} to deduce
\begin{equation}\label{concludecayeq3}
\begin{aligned}
\Big|\frac{1}{(1+D^{\alpha})}\partial^{\gamma}(x^{\beta_2}\partial^{\eta_2}(Q^m))
\Big|=|\widetilde{G}_{\alpha}\ast \big(\partial^{\gamma}(x^{\beta_2}\partial^{\eta_2}(Q^m))\big)|\lesssim \langle x \rangle^{-\alpha-d},
\end{aligned}
\end{equation}
for all $0\leq |\beta_2|\leq |\beta|=\kappa$ and $|\gamma|=|\beta_2|$.

On the other hand, let $\beta_1,\beta_2,\gamma_1,\gamma_2$ be multi-indices such that $|\beta_1|+|\beta_2|=|\beta|$ with $|\beta_1|\geq 1$, $|\gamma_1|=2|\beta_1|$, $|\gamma_2|=|\beta_2|$, and $1\leq l\leq|\beta_1|$. Then  we write
\begin{equation*}
\begin{aligned}
\frac{D^{\alpha l-2|\beta_1|}\partial^{\gamma_1}}{(1+D^{\alpha})^{1+|\beta_1|}}\partial^{\gamma_2}(x^{\beta_2}\partial^{\eta_2}(Q^m)) & =\frac{D^{\alpha l-2|\beta_1|}\partial^{\gamma_1}}{(1+D^{\alpha})^{1+|\beta_1|}}P_{\psi}\partial^{\gamma_2}(x^{\beta_2}\partial^{\eta_2}(Q^m))\\
&+\frac{D^{\alpha l-2|\beta_1|}\partial^{\gamma_1}}{(1+D^{\alpha})^{1+|\beta_1|}}(1-P_{\psi})\partial^{\gamma_2}(x^{\beta_2}\partial^{\eta_2}(Q^m)).
\end{aligned}
\end{equation*}
By Lemma \ref{kernDecay}, we have
\begin{equation*}
\begin{aligned}
\frac{D^{\alpha l-2|\beta_1|}\partial^{\gamma_1}}{(1+D^{\alpha})^{1+|\beta_1|}}P_{\psi}\partial^{\gamma_2}(x^{\beta_2}\partial^{\eta_2}(Q^m))=K\ast \big(\partial^{\gamma_2}(x^{\beta_2}\partial^{\eta_2}(Q^m))\big),
\end{aligned}    
\end{equation*}
where $|K(x)|\lesssim \langle x \rangle^{-\alpha-d}$. Since Lemma \ref{decayQm} implies that $|\partial^{\gamma_2}(x^{\beta_2}\partial^{\eta_2}(Q^m))|\lesssim \langle x \rangle^{-\alpha-d}$, we conclude
\begin{equation}\label{concludecayeq4}
\begin{aligned}
\Big|\frac{D^{\alpha l-2|\beta_1|}\partial^{\gamma_1}}{(1+D^{\alpha})^{1+|\beta_1|}}P_{\psi}\partial^{\gamma_2}(x^{\beta_2}\partial^{\eta_2}(Q^m))(x)\Big|=& |K\ast \big(\partial^{\gamma_2}(x^{\beta_2}\partial^{\eta_2}(Q^m))\big)(x)|
\lesssim & \langle x\rangle^{-\alpha-d}.
\end{aligned}
\end{equation}
Next, we let $p\geq 1$ integer be such that $2p>\max\{d-\alpha,0\}$ and we write
\begin{equation*}
\begin{aligned}
\frac{D^{\alpha l-2|\beta_1|}\partial^{\gamma_1}}{(1+D^{\alpha})^{1+|\beta_1|}}&(1-P_{\psi})\partial^{\gamma_2}(x^{\beta_2}\partial^{\eta_2}(Q^m))\\
&=\frac{D^{\alpha l-2|\beta_1|}\partial^{\gamma_1}(-\Delta)^{-p}}{(1+D^{\alpha})^{1+|\beta_1|}}(1-P_{\psi})(-\Delta)^{p}\partial^{\gamma_2}(x^{\beta_2}\partial^{\eta_2}(Q^m))\\
&=\widetilde{K}\ast\big((-\Delta)^{p}\partial^{\gamma_2}(x^{\beta_2}\partial^{\eta_2}(Q^m))\big),
\end{aligned}    
\end{equation*}
where Lemma \ref{kernDecay} shows that $|\widetilde{K}(x)|\lesssim \langle x\rangle^{-\alpha-d}$. Given that $(-\Delta)^{p}$ is a local operator ($p$ is an integer), Lemma \ref{decayQm} implies that $|(-\Delta)^{p}\partial^{\gamma_2}(x^{\beta_2}\partial^{\eta_2}(Q^m))(x)|\lesssim \langle x\rangle^{-\alpha-d}$. Summarizing, we obtain
\begin{equation}\label{concludecayeq5}
\begin{aligned}
\Big|\frac{D^{\alpha l-2|\beta_1|}\partial^{\gamma_1}}{(1+D^{\alpha})^{1+|\beta_1|}}&(1-P_{\psi})\partial^{\gamma_2}(x^{\beta_2}\partial^{\eta_2}(Q^m))(x)\Big|\lesssim \langle x\rangle^{-\alpha-d}.
\end{aligned}
\end{equation}
Plugging \eqref{concludecayeq3}, \eqref{concludecayeq4} and \eqref{concludecayeq5} into \eqref{concludecayeq2} and \eqref{concludecayeq1}, we get
\begin{equation*}
\begin{aligned}
|x^{\beta}\partial^{\eta}Q(x)|\lesssim \langle x \rangle^{-\alpha-d}.
\end{aligned}
\end{equation*}
Since $\beta$ is an arbitrary multi-index with $|\beta|=\kappa$, we conclude
\begin{equation*}
\begin{aligned}
|\partial^{\eta}Q(x)|\lesssim \langle x \rangle^{-\alpha-d-\kappa}
\end{aligned}
\end{equation*}
for all $|\eta|\geq \kappa$. This completes the inductive step, consequently, finishing the proof of the lemma. 
\end{proof}

\begin{remark}
The proof of Lemma \ref{LemmaQprop} shows that the spatial decay of $Q$ is limited by that of the kernel $\widetilde{G}_{\alpha}$ associated with the operator $(1+D^{\alpha})^{-1}$. For example, this can be seen in the decomposition \eqref{concludecayeq2}, where the operator $(1+D^{\alpha})^{-1}$ ultimately determines the maximum spatial decay of $Q$ and its derivatives.
\end{remark}

We next state the Pokhozhaev identities of the equation \eqref{EQ:groundState}, which will be useful later. 
\begin{lemma}\label{PHident}
Let $d\geq 1$. Assume $0<\alpha<2$, $c>0 $, $1<m< m_{\ast}$. Let $Q_c$ be a smooth vanishing at infinity solution of \eqref{EQ:groundState}. Then the following identities hold
\begin{align}
&\|D^{\frac{\alpha}{2}}Q_c \|_{L^2}^2 =\frac{d\,c\,(m-1)}{(2d-(d-\alpha)(m+1))}\| Q_c \|_{L^2}^2, \label{Phideneq1} \\
& \int_{\mathbb{R}^d} Q^{m+1}_c \, dx=\frac{\alpha\, c\,m\,(m+1)}{(2d-(d-\alpha)(m+1))}\| Q_c \|_{L^2}^2. \label{Phideneq2}
\end{align}
As a consequence, 
\begin{equation}\label{E:Energy-phi}
E[Q_c] = s_c \, \frac{c(m-1)}{(2d-(d-\alpha)(m+1))} \| Q_c \|_{L^2}^2,
\end{equation}
where $s_c$ is defined in \eqref{CritiInd}.
\end{lemma}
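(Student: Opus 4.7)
The plan is to derive the two identities by pairing the equation \eqref{EQ:groundState} against two natural test functions, $Q_c$ itself and the dilation generator $x\cdot\nabla Q_c$, then solving the resulting linear system for $\|D^{\alpha/2}Q_c\|_{L^2}^2$ and $\int Q_c^{m+1}\,dx$ in terms of $\|Q_c\|_{L^2}^2$. The polynomial decay and smoothness of $Q_c$ (which follow by rescaling from Lemma \ref{LemmaQprop}) provide the integrability required to justify all manipulations.

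First I would multiply \eqref{EQ:groundState} by $Q_c$ and integrate. Using that $D^\alpha$ is self-adjoint with $\int Q_c D^\alpha Q_c\,dx = \|D^{\alpha/2}Q_c\|_{L^2}^2$, one obtains
\begin{equation*}
c\,\|Q_c\|_{L^2}^2 + \|D^{\alpha/2}Q_c\|_{L^2}^2 \;=\; \tfrac{1}{m}\int_{\mathbb R^d} Q_c^{m+1}\,dx.
\end{equation*}
Next I would multiply \eqref{EQ:groundState} by $x\cdot\nabla Q_c$ and integrate. For the linear and the nonlinear term, write $x\cdot\nabla(Q_c^2) = 2Q_c(x\cdot\nabla Q_c)$ and $x\cdot\nabla(Q_c^{m+1}) = (m+1)Q_c^m(x\cdot\nabla Q_c)$, integrate by parts using $\nabla\cdot x = d$, and use the decay of $Q_c$ to discard boundary terms; this yields
\begin{equation*}
\int Q_c(x\cdot\nabla Q_c)\,dx = -\tfrac{d}{2}\|Q_c\|_{L^2}^2, \qquad \int Q_c^m(x\cdot\nabla Q_c)\,dx = -\tfrac{d}{m+1}\int Q_c^{m+1}\,dx.
\end{equation*}

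The one nontrivial computation is the fractional dilation identity
\begin{equation*}
\int_{\mathbb R^d} (x\cdot\nabla Q_c)\,D^\alpha Q_c\,dx \;=\; \tfrac{\alpha-d}{2}\,\|D^{\alpha/2}Q_c\|_{L^2}^2.
\end{equation*}
I would obtain this by the standard scaling trick: set $Q_{c,\lambda}(x):=Q_c(\lambda x)$, and use the homogeneity $D^\alpha(f(\lambda\cdot))(x)=\lambda^\alpha (D^\alpha f)(\lambda x)$ together with a change of variables to get $\int Q_{c,\lambda}\,D^\alpha Q_{c,\lambda}\,dx = \lambda^{\alpha-d}\|D^{\alpha/2}Q_c\|_{L^2}^2$. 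Differentiating at $\lambda=1$ produces the left-hand side (after symmetrization via self-adjointness of $D^\alpha$). The decay bounds $|Q_c(x)|+|\nabla Q_c(x)|\lesssim \langle x\rangle^{-\alpha-d}$ from Lemma \ref{LemmaQprop} ensure that $x\cdot\nabla Q_c$ lies in a dual space pairing with $D^\alpha Q_c$, so the differentiation under the integral is legitimate; this is the main technical point. Combining the three pieces, the second identity reads
\begin{equation*}
-\tfrac{cd}{2}\,\|Q_c\|_{L^2}^2 + \tfrac{\alpha-d}{2}\,\|D^{\alpha/2}Q_c\|_{L^2}^2 + \tfrac{d}{m(m+1)}\int Q_c^{m+1}\,dx \;=\; 0.
\end{equation*}

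Finally I would solve the $2\times 2$ linear system in the unknowns $\|D^{\alpha/2}Q_c\|_{L^2}^2$ and $\int Q_c^{m+1}\,dx$, with parameter $\|Q_c\|_{L^2}^2$. Eliminating $\int Q_c^{m+1}\,dx$ between the two relations gives exactly \eqref{Phideneq1}, and substituting back recovers \eqref{Phideneq2}; the numerator simplification $2d-(d-\alpha)(m+1)+d(m-1)=\alpha(m+1)$ is the only non-immediate arithmetic. The consequence \eqref{E:Energy-phi} follows by plugging \eqref{Phideneq1}--\eqref{Phideneq2} into $E[Q_c]=\tfrac12\|D^{\alpha/2}Q_c\|_{L^2}^2 - \tfrac{1}{m(m+1)}\int Q_c^{m+1}\,dx$ and factoring out $(m-1)$, after which the bracket reduces to $\tfrac{d}{2}-\tfrac{\alpha}{m-1}=s_c$.
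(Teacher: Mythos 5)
Your proposal is correct, and the overall structure -- pairing \eqref{EQ:groundState} against $Q_c$ and against the dilation generator $x\cdot\nabla Q_c$, then solving the resulting $2\times 2$ system -- is the same standard Pohozaev strategy the paper uses (the paper just sketches it by citing \cite{OscSvetKai2021}). The one place you genuinely diverge is the key fractional computation $\int (x\cdot\nabla Q_c)\,D^\alpha Q_c\,dx = \tfrac{\alpha-d}{2}\|D^{\alpha/2}Q_c\|_{L^2}^2$: you get it by the dilation/scaling trick, differentiating $\lambda \mapsto \int Q_c(\lambda\cdot)D^\alpha Q_c(\lambda\cdot)\,dx = \lambda^{\alpha-d}\|D^{\alpha/2}Q_c\|_{L^2}^2$ at $\lambda=1$, whereas the paper uses the commutator identity $[D^{\alpha/2},x_j]\partial_{x_j}f=-\tfrac{\alpha}{2}D^{\alpha/2-2}\partial_{x_j}^2 f$ summed over $j$. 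Both yield the same identity; the scaling argument is cleaner when $Q_c$ is known to be smooth and decaying (as it is here by Lemma \ref{LemmaQprop}), while the commutator route is more directly amenable to a rigorous justification purely at the level of $Q_c\in H^{\alpha+1}$, without having to argue differentiation under the integral. Your algebra (the elimination giving $A=\tfrac{cd(m-1)}{2d-(d-\alpha)(m+1)}C$, the numerator simplification $2d-(d-\alpha)(m+1)+d(m-1)=\alpha(m+1)$, and the factoring $\tfrac{d(m-1)}{2}-\alpha=(m-1)s_c$) checks out.
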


\begin{proof}
Lemma \ref{PHident} follows by deducing Pokhozhaev identities via the equation \eqref{EQ:groundState}. These identities can be obtained by similar arguments as in the proof of  \cite[Lemma 1]{OscSvetKai2021}. We remark that to adapt the proof from \cite{OscSvetKai2021}, we require the following identities
\begin{equation}
[D^{\frac{\alpha}{2}},x_j]\partial_{x_j}f=-\frac{\alpha }{2}D^{\frac{\alpha}{2}-2}\partial_{x_j}^2f,
\end{equation}
for all $j=1,\dots,d$. 
\end{proof}

Before closing this section, we mention a commutator estimate for the homogeneous fractional derivatives that we need later. 
\begin{proposition}\label{fractionalDeriv}
Let $d \geq 1$, $s>0$ and $1<p<\infty$. Then  
\begin{equation}
\|D^s(fg)-\sum_{|\beta|\leq s} \frac{1}{\beta!} \partial^{\beta}f D^{s,\beta} g\|_{L^p(\mathbb R^d)}\lesssim \|D^{s}f\|_{L^{\infty}}\|g\|_{L^{p}(\mathbb R^d)}.
\end{equation}
The operator $D^{s,\beta}$ is defined via the Fourier transform as
\begin{equation*}
\widehat{D^{s,\beta}g}(\xi)=\widehat{D^{s,\beta}}(\xi)\widehat{g}(\xi) \, \, \text{ and } \, \,  \widehat{D^{s,\beta}}(\xi)=i^{-|\beta|}\partial_{\xi}^{\beta}(|\xi|^s).
\end{equation*}
\end{proposition}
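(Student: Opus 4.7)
The plan is to view the left-hand side as a bilinear Fourier multiplier and prove the estimate via paraproduct decomposition together with a Taylor expansion of the symbol $|\xi+\eta|^s$, following the strategy of Kato--Ponce--type commutator estimates (e.g.\ of the kind studied in \cite{GrafakosOh2014}). First, I would verify that the expression on the LHS of the proposition can be rewritten as $T_\sigma(f,g)$, the bilinear Fourier multiplier with symbol
\begin{equation*}
\sigma(\xi,\eta) \,=\, |\xi+\eta|^s - \sum_{|\beta|\leq s}\frac{\xi^\beta}{\beta!}\,\partial_\eta^\beta(|\eta|^s).
\end{equation*}
The form of the Taylor correction is exactly what we obtain by computing the symbol of $\partial^\beta f\cdot D^{s,\beta}g$: the factor $(i\xi)^\beta$ coming from $\partial^\beta f$ and the prefactor $i^{-|\beta|}$ in the definition of $D^{s,\beta}$ cancel, producing the clean factor $\xi^\beta\,\partial_\eta^\beta(|\eta|^s)/\beta!$ in the symbol.

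Next, I would perform a standard Littlewood--Paley paraproduct decomposition $\sigma = \sigma_{LH} + \sigma_{HL} + \sigma_{HH}$ corresponding to the three frequency regimes $|\xi|\ll|\eta|$, $|\xi|\gg|\eta|$, and $|\xi|\sim|\eta|$. The main contribution is in the low--high regime $|\xi|\ll|\eta|$, where Taylor's theorem applied to $\eta\mapsto|\eta|^s$ at the base point $\eta$ gives
\begin{equation*}
|\xi+\eta|^s \,=\, \sum_{|\beta|\leq \lfloor s\rfloor}\frac{\xi^\beta}{\beta!}\,\partial_\eta^\beta(|\eta|^s) + R(\xi,\eta),
\end{equation*}
with pointwise bound $|R(\xi,\eta)|\lesssim|\xi|^{\lfloor s\rfloor+1}|\eta|^{s-\lfloor s\rfloor-1}$ and analogous bounds on its derivatives. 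Thus on the low--high piece, the symbol $\sigma_{LH}$ is exactly the paraproduct-localized remainder $R$, and it satisfies a Coifman--Meyer/Mihlin--H\"ormander bilinear condition. The corresponding bilinear operator then maps $L^\infty\times L^p\to L^p$, with $D^s$ effectively acting on the $L^\infty$ input, yielding the bound by $\|D^sf\|_{L^\infty}\|g\|_{L^p}$.

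For the high--low and comparable regimes $\sigma_{HL}$, $\sigma_{HH}$, the Taylor subtraction is not providing cancellation, so the two families of terms, $D^s(fg)$ and each $\partial^\beta f\cdot D^{s,\beta}g$, must be estimated separately on each frequency localization. Here the derivatives fall on the highest frequency (which is essentially that of $f$), and the resulting paraproducts are bounded by $\|D^sf\|_{L^\infty}\|g\|_{L^p}$ by standard Littlewood--Paley/square function arguments, using the $L^p$--boundedness of smooth frequency projections.

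The hardest step is the bookkeeping of the remainder $R(\xi,\eta)$ in the paraproduct setting: one must verify that after restriction to $|\xi|\ll|\eta|$ via smooth cutoffs, the symbol $\sigma_{LH}$ together with all of its mixed derivatives $\partial_\xi^a\partial_\eta^b\sigma_{LH}$ satisfies the bilinear Mihlin-type scaling $|\partial_\xi^a\partial_\eta^b\sigma_{LH}(\xi,\eta)|\lesssim |\xi|^{\lfloor s\rfloor+1-|a|}|\eta|^{s-\lfloor s\rfloor-1-|b|}$, so that the Coifman--Meyer bilinear multiplier theorem applies. Everything else reduces to routine Littlewood--Paley estimates, and the range of $p$ with $1<p<\infty$ enters only through the $L^p$--boundedness of Mihlin-type multipliers and the maximal function appearing in the paraproduct bound.
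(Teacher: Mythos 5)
The paper does not actually prove Proposition~\ref{fractionalDeriv}: it simply records that the estimate is a particular case of Theorem~1.2 in \cite{DonLi2019} and points to \cite{KPV1993} for earlier one-dimensional variants. Your proposal, a direct bilinear-paraproduct proof with a Taylor expansion of the symbol $|\xi+\eta|^s$, is therefore a genuinely different route from the paper's one-line citation; in return it is self-contained and, in spirit, is exactly how the cited theorem is established, so what it ``buys'' is an actual proof rather than an appeal to authority. Your setup is correct: the left-hand side is $T_\sigma(f,g)$ with $\sigma(\xi,\eta)=|\xi+\eta|^s-\sum_{|\beta|\le s}\tfrac{\xi^\beta}{\beta!}\partial_\eta^\beta(|\eta|^s)$, the $i^{-|\beta|}$ cancels the $(i\xi)^\beta$ from $\partial^\beta f$, and the low--high paraproduct is indeed where the Taylor subtraction provides the cancellation.

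There is, however, a gap in the step you identify as the crux. To conclude a bound by $\|D^s f\|_{L^\infty}\|g\|_{L^p}$ you must treat the bilinear multiplier that acts on the pair $(D^s f,g)$, i.e.\ $\tilde\sigma_{LH}(\xi,\eta)=\sigma_{LH}(\xi,\eta)/|\xi|^s$, not $\sigma_{LH}$ itself, and $\tilde\sigma_{LH}$ does \emph{not} satisfy the Coifman--Meyer condition. Writing $a_0=\lfloor s\rfloor+1-s\in(0,1]$ for the remainder exponent, one gets $|\partial_\xi^a\tilde\sigma_{LH}(\xi,\eta)|\lesssim|\xi|^{a_0-|a|}|\eta|^{-a_0}$, and in the regime $|\xi|\ll|\eta|$ this exceeds the required $(|\xi|+|\eta|)^{-|a|}\sim|\eta|^{-|a|}$ by a factor $(|\eta|/|\xi|)^{|a|-a_0}$, which is unbounded once $|a|\geq 2$ (and already for $|a|=1$ when $s$ is non-integer). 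So a single application of the Coifman--Meyer/Mihlin bilinear theorem, as stated, does not go through. The standard repair is to keep the dyadic localization explicit: with $f$ at frequency $2^j$ and $g$ at frequency $2^k$, $j\ll k$, the piece of $\tilde\sigma_{LH}$ is a smooth bump of size $\lesssim 2^{(j-k)a_0}$, and the geometric decay in $j-k$ lets one sum the $L^p$ bounds of the individual pieces (via the Littlewood--Paley square function and the maximal function for the low-frequency factor). With that adjustment the sketch becomes a correct proof; the high--low and diagonal regimes are, as you say, the easy part.
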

Proposition \ref{fractionalDeriv} is a particular case of \cite[Theorem 1.2]{DonLi2019}. For other fractional commutator estimates in the one-dimensional setting, refer to \cite{KPV1993}. 

\section{Orbital Stability}\label{S:OrbitalS}

In this section we prove Theorem \ref{OrbstabilThm}. We use the concentration-compactness method to develop the ideas from \cite{Albert1999,LinPilodSaut2015,LionsI1984,LionsII1984} in our setting, covering the entire range of $\alpha$ and also extending to higher dimensions in our model \eqref{EQ:fKdV}. Recalling \eqref{E:energy} and \eqref{E:mass}, we define for any $q>0$
\begin{equation}\label{min:prob}
\mathcal{I}_q=\inf_{u\in H^{\frac{\alpha}{2}}(\mathbb{R}^d)}\left\{ E[v]\, :\, M[v]=q \right\}.
\end{equation}  
Denote by $\mathcal{G}_q=\{v\in H^{\frac{\alpha}{2}}(\mathbb{R}^d)\,:\, M[v]=q \, \text{ and } \, E[v]=\mathcal{I}_q \}$ the set (possibly empty) of minimizers of \eqref{min:prob}. 

\begin{lemma}\label{Lemmanegat}
Let $0<\alpha<2$ and $1<m<\frac{2\alpha}{d}+1$. For any $q>0$ it follows $-\infty<\mathcal{I}_q<0$.
\end{lemma}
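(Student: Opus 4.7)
The plan is to prove the two bounds by independent arguments, both resting on the subcritical relation $1<m<\frac{2\alpha}{d}+1$, which I would rewrite in the two equivalent forms $\theta := \frac{d(m-1)}{\alpha} < 2$ and $\frac{d(m-1)}{2} < \alpha$.

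For the lower bound $\mathcal{I}_q > -\infty$, I would invoke the sharp fractional Gagliardo--Nirenberg inequality \eqref{GNineq}. For any admissible $v$, i.e., $v\in H^{\alpha/2}(\mathbb{R}^d)$ with $M[v]=q$ (so $\|v\|_{L^2}^2 = 2q$), setting $X = \|D^{\alpha/2}v\|_{L^2}$ gives
\begin{equation*}
E[v] \geq \tfrac{1}{2}X^2 - \tfrac{C_{GN}}{m(m+1)}(2q)^{\frac{(m+1)-\theta}{2}} \, X^{\theta}.
\end{equation*}
Since $\theta < 2$, the right-hand side is a continuous function of $X \in [0,\infty)$ tending to $+\infty$ as $X \to \infty$, hence it is uniformly bounded below by some constant depending only on $q$, $m$, $\alpha$, $d$ and $C_{GN}$. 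Taking the infimum over admissible $v$ gives $\mathcal{I}_q > -\infty$.

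For the upper bound $\mathcal{I}_q < 0$, I would exhibit a near-minimizer by a mass-preserving scaling. Fix any non-negative, non-zero $\phi \in H^{\alpha/2}(\mathbb{R}^d)$ normalized so that $M[\phi]=q$ (for instance, a positive Gaussian, rescaled if necessary), and consider the dilations $v_\lambda(x) = \lambda^{d/2}\phi(\lambda x)$ for $\lambda>0$, which preserve the $L^2$-norm. A direct change of variables shows
\begin{equation*}
E[v_\lambda] = \tfrac{\lambda^\alpha}{2}\|D^{\alpha/2}\phi\|_{L^2}^2 \;-\; \tfrac{\lambda^{d(m-1)/2}}{m(m+1)} \int_{\mathbb{R}^d}\phi^{m+1}\,dx.
\end{equation*}
The subcriticality $d(m-1)/2 < \alpha$ ensures that as $\lambda \to 0^{+}$, the (positive) dispersive term vanishes strictly faster than the (negative) nonlinear one, whence $E[v_\lambda] < 0$ for all sufficiently small $\lambda>0$. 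Since $M[v_\lambda]=q$, this gives $\mathcal{I}_q \leq E[v_\lambda] < 0$.

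I do not foresee any serious obstacle: both directions reduce to standard scaling considerations in conjunction with \eqref{GNineq}, and the only point demanding care is to verify that the two incarnations of subcriticality ($\theta<2$ for the Gagliardo--Nirenberg bound and $d(m-1)/2 < \alpha$ for the scaling) are the same condition written differently. The factor $\tfrac{1}{2}$ built into the definition \eqref{E:mass} of $M$ only affects the explicit form of the constant $C(q)$ and the initial normalization of $\phi$, but not the structure of the argument.
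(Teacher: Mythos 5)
Your proposal is correct and takes essentially the same route as the paper: the lower bound comes from the fractional Gagliardo--Nirenberg inequality plus the observation that $\theta=\frac{d(m-1)}{\alpha}<2$, and the upper bound comes from exhibiting a mass-preserving scaling family with negative energy for small parameter. Your scaling $v_\lambda(x)=\lambda^{d/2}\phi(\lambda x)$ is the paper's $\varphi_\theta(x)=\theta\varphi(\theta^{2/d}x)$ under the reparametrization $\lambda=\theta^{2/d}$, and you replace the paper's Young-inequality step by directly minimizing $X\mapsto\frac12 X^2 - CX^\theta$, which is a cosmetic simplification rather than a different argument.
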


\begin{proof}
We use the Gagliardo-Nirenberg \eqref{GNineq} and Young's inequalities to deduce 
\begin{equation}\label{eq:estab1}
\begin{aligned}
\Big|\int v^{m+1} \big| \leq \delta \|v\|_{H^{\frac{\alpha}{2}}}^2+C_{\delta}\|v\|_{L^2}^{\frac{2(\alpha(m+1)-d(m-1))}{2\alpha-d(m-1)}}
\end{aligned}
\end{equation}
for any $\delta>0$. The previous inequality and taking $\delta>0$ small enough yield
\begin{equation}
\begin{aligned}
E[v]&=E[v]+M[v]-M[v]\\
&=\frac{1}{2}\|v\|_{H^{\frac{\alpha}{2}}}^2-\frac{1}{m(m+1)}\int v^{m+1} -M[v] \\
&\geq \Big(\frac{1}{2}-\frac{\delta}{m(m+1)} \Big)\|v\|_{H^{\frac{\alpha}{2}}}-q-\frac{C_{\delta}}{m(m+1)}q^{\frac{2(\alpha(m+1)-d(m-1))}{2\alpha-d(m-1)}}\\
&\geq  -q-\frac{C_{\delta}}{{m(m+1)}}q^{\frac{2(\alpha(m+1)-d(m-1))}{2\alpha-d(m-1)}}.
\end{aligned}
\end{equation}
Thus, $\mathcal{I}_q>-\infty$. On the other hand, let $\varphi \in H^{\frac{\alpha}{2}}(\mathbb{R}^d)$ with $\int \varphi^{m+1} > 0$ and $M[\varphi]=q$. For each $\theta>0$, we define $\varphi_{\theta}(x)=\theta\varphi(\theta^{\frac{2}{d}} x)$, then $M[\varphi_{\theta}]=M[\varphi]=q$, and
\begin{equation*}
E[\varphi_{\theta}]=\frac{\theta^{\frac{2\alpha}{d}}}{2}\int |D^{\frac{\alpha}{2}}\varphi|^2 -\frac{\theta^{m-1}}{m(m+1)}\int \varphi^{m+1}.
\end{equation*} 
Since $m-1<\frac{2\alpha}{d}$, we can take some $0<\theta_0<1$ sufficiently small such that $E[\varphi_{\theta}]<0$, which implies $\mathcal{I}_q<0$.
\end{proof}

We now establish a subadditivity property of $\mathcal{I}_q$. 

\begin{lemma}\label{subaddprop} 
Let $1<m<\frac{2\alpha}{d}+1$. For any $q_1,q_2>0$, it follows $\mathcal{I}_{q_1+q_2}< \mathcal{I}_{q_1}+\mathcal{I}_{q_2}$.
\end{lemma}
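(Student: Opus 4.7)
The plan is to derive an exact scaling identity for $\mathcal{I}_q$ and then deduce the strict subadditivity directly from the strict superadditivity of a power function. The key observation is that in the $L^2$-subcritical regime there is a two-parameter dilation that sends admissible functions at mass level $q$ to admissible functions at mass level $\theta q$ while multiplying the energy by a fixed power $\theta^{\sigma}$ with $\sigma>1$; combining this with $\mathcal{I}_1<0$ from Lemma~\ref{Lemmanegat} forces $\mathcal{I}_q = \mathcal{I}_1\, q^{\sigma}$, from which strict subadditivity is immediate.

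Concretely, I would introduce $v_\theta(x)=\theta^{a} v(\theta^{b} x)$ with $a,b>0$ to be chosen. A direct Plancherel computation gives
\begin{equation*}
 M[v_\theta] = \theta^{2a-bd}\, M[v],\qquad \|D^{\frac{\alpha}{2}}v_\theta\|_{L^2}^{2} = \theta^{2a-bd+b\alpha}\,\|D^{\frac{\alpha}{2}}v\|_{L^2}^{2},\qquad \int v_\theta^{m+1}\, dx = \theta^{a(m+1)-bd}\int v^{m+1}\, dx.
\end{equation*}
Imposing simultaneously $2a-bd=1$ (so that the mass scales linearly) and $a(m-1)=b\alpha$ (so that both pieces of $E$ scale with the same power of $\theta$) yields the unique solution
\begin{equation*}
 a=\frac{\alpha}{2\alpha-d(m-1)},\qquad b=\frac{m-1}{2\alpha-d(m-1)},\qquad \sigma := 1+b\alpha.
\end{equation*}
The subcriticality hypothesis $1<m<\frac{2\alpha}{d}+1$ gives $2\alpha-d(m-1)>0$, so $a,b>0$ and crucially $\sigma>1$. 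With this choice $M[v_\theta]=\theta\, M[v]$ and $E[v_\theta]=\theta^{\sigma}E[v]$. Taking infima over admissible $v$ (and using that $\theta^{\sigma}>0$ preserves the direction of the inequality) yields $\mathcal{I}_{\theta q}\leq \theta^{\sigma}\mathcal{I}_q$; applying the same argument with $1/\theta$ in place of $\theta$ and $\theta q$ in place of $q$ gives the reverse inequality. Hence $\mathcal{I}_{\theta q}=\theta^{\sigma}\mathcal{I}_q$ for all $\theta,q>0$, and in particular $\mathcal{I}_q=\mathcal{I}_1\, q^{\sigma}$.

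Finally, since $\sigma>1$, the power function $q\mapsto q^{\sigma}$ is strictly superadditive, i.e., $(q_1+q_2)^{\sigma} > q_1^{\sigma}+q_2^{\sigma}$ for $q_1,q_2>0$. Multiplying by the \emph{negative} constant $\mathcal{I}_1<0$ (Lemma~\ref{Lemmanegat}) reverses the inequality:
\begin{equation*}
 \mathcal{I}_{q_1+q_2} = \mathcal{I}_1\,(q_1+q_2)^{\sigma} \;<\; \mathcal{I}_1\, q_1^{\sigma} + \mathcal{I}_1\, q_2^{\sigma} = \mathcal{I}_{q_1}+\mathcal{I}_{q_2},
\end{equation*}
which is the desired strict subadditivity. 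The only step that needs to be done carefully is the Plancherel computation of the fractional seminorm under the anisotropic dilation, but this is routine once one tracks the Fourier-side scaling; there is no substantive obstacle here because both $\mathcal{I}_1<0$ and $\sigma>1$ come directly from the $L^2$-subcritical hypothesis.
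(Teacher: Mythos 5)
Your proof is correct and takes essentially the same approach as the paper: the paper uses precisely the scaling $v_\theta(x)=\theta^{\frac{\alpha}{2\alpha-d(m-1)}}v\big(\theta^{\frac{m-1}{2\alpha-d(m-1)}}x\big)$ to get $\mathcal{I}_{\theta q}=\theta^{\sigma}\mathcal{I}_q$ with $\sigma=\frac{\alpha(m+1)-d(m-1)}{2\alpha-d(m-1)}>1$, and then combines $\mathcal{I}_1<0$ with the strict superadditivity of $q\mapsto q^\sigma$. Your derivation of the exponents from the two imposed constraints, and your explicit two-sided infimum argument for the scaling identity, simply make a couple of steps more transparent that the paper writes down directly.
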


\begin{proof}
Given $v\in H^{\frac{\alpha}{2}}(\mathbb{R}^d)$ with $M[v]=q$, we consider the scaling (for $\theta>0$)
\begin{equation}
v_{\theta}(x)=\theta^{\frac{\alpha}{2\alpha-d(m-1)}}v\big(\theta^{\frac{(m-1)}{2\alpha-d(m-1)}}x\big).
\end{equation}
Then $\|v_{\theta}\|_{L^2}^{2}=\theta\|v\|_{L^2}^{2}$ and $E[v_{\theta}]=\theta^{\frac{\alpha(m+1)-d(m-1)}{2\alpha-d(m-1)}}E[v]$ yield
\begin{equation*}
\mathcal{I}_{\theta q}=\theta^{\frac{\alpha(m+1)-d(m-1)}{2\alpha-d(m-1)}} \mathcal{I}_q.
\end{equation*}
Choosing $q=1$, $\theta=q_1+q_2$ in the above identity, and using that $\mathcal{I}_1<0$, we get
\begin{equation*}
\mathcal{I}_{q_1+q_2}
=(q_1+q_2)^{\frac{\alpha(m+1)-d(m-1)}{2\alpha-d(m-1)}} \,\mathcal{I}_1 
< \Big( q_1^{\frac{\alpha(m+1)-d(m-1)}{2\alpha-d(m-1)}}+q_2^{\frac{\alpha(m+1)-d(m-1)}{2\alpha-d(m-1)}}\Big) \mathcal{I}_1 
= \mathcal{I}_{q_1}+\mathcal{I}_{q_2}.
\end{equation*}
\end{proof}
Next, we show that the minimizing sequences for $\mathcal{I}_q$ are bounded in $H^{\frac{\alpha}{2}}(\mathbb{R}^d)$.
\begin{lemma}\label{lemmaUnibound}
Let $0<\alpha<2$, $1<m<\frac{2\alpha}{d}+1$, $q>0$, and $\{v_n\}$ be a minimizing sequence for $\mathcal{I}_q$ in $H^{\frac{\alpha}{2}}(\mathbb{R}^d)$. Then there exist a constant $C>0$ and $\delta>0$ such that
\begin{itemize}
\item[(a)] 
$\|v_n\|_{H^{\frac{\alpha}{2}}}\leq C$ for all $n$, and

\item[(b)] 
$\|v_n\|_{L^{m+1}} \geq \delta$ for all $n$ sufficiently large. 
\end{itemize} 
\end{lemma}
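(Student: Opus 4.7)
The plan is to use the Gagliardo--Nirenberg inequality \eqref{GNineq} together with the $L^2$ constraint to bound the energy from below in terms of $\|D^{\alpha/2}v_n\|_{L^2}^2$, and then exploit the fact that $\mathcal{I}_q<0$ (Lemma \ref{Lemmanegat}) to derive the lower bound on $\|v_n\|_{L^{m+1}}$.

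For part (a), I would start from the identity
\[
\frac{1}{2}\|D^{\frac{\alpha}{2}}v_n\|_{L^2}^2 = E[v_n] + \frac{1}{m(m+1)} \int_{\mathbb R^d} v_n^{m+1}\, dx.
\]
Since $M[v_n]=q$, one has $\|v_n\|_{L^2}^2=2q$. Applying \eqref{GNineq} to $|v_n|$ (so the left-hand side controls $\big|\int v_n^{m+1}\big|$) gives
\[
\Big|\int v_n^{m+1}\, dx\Big| \le C_{GN}\,\|D^{\frac{\alpha}{2}}v_n\|_{L^2}^{\frac{d(m-1)}{\alpha}}(2q)^{\frac{(m+1)}{2}-\frac{d(m-1)}{2\alpha}}.
\]
Set $\theta:=\frac{d(m-1)}{\alpha}$; the subcritical condition $m<\frac{2\alpha}{d}+1$ is precisely $\theta<2$. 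Therefore
\[
E[v_n] \ge \frac{1}{2}\|D^{\frac{\alpha}{2}}v_n\|_{L^2}^2 - C(q)\,\|D^{\frac{\alpha}{2}}v_n\|_{L^2}^{\theta}.
\]
Because $\{v_n\}$ is minimizing, $E[v_n]$ is bounded, so the function $y\mapsto \tfrac12 y^2 - C(q)\,y^{\theta}$ being coercive (as $\theta<2$) forces $\|D^{\frac{\alpha}{2}}v_n\|_{L^2}$ to be bounded. Combined with the $L^2$ bound, this yields $\sup_n \|v_n\|_{H^{\frac{\alpha}{2}}} \le C$.

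For part (b), I would use that $\mathcal{I}_q<0$ by Lemma \ref{Lemmanegat}. Hence, for $n$ sufficiently large, $E[v_n]\le \tfrac{1}{2}\mathcal{I}_q<0$. Rearranging the energy definition and dropping the nonnegative dispersive term,
\[
\frac{1}{m(m+1)}\int_{\mathbb R^d} v_n^{m+1}\, dx = \frac{1}{2}\|D^{\frac{\alpha}{2}}v_n\|_{L^2}^2 - E[v_n] \ge -E[v_n] \ge -\tfrac{1}{2}\mathcal{I}_q>0.
\]
In particular $\int v_n^{m+1}\,dx>0$, so
\[
\|v_n\|_{L^{m+1}}^{m+1} \ge \int v_n^{m+1}\, dx \ge -\tfrac{m(m+1)}{2}\,\mathcal{I}_q>0,
\]
which gives the desired $\delta>0$.

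There is no substantive obstacle here: the argument is entirely standard once the subcritical scaling $\theta<2$ and the strict negativity $\mathcal{I}_q<0$ (already established above) are in hand. The only minor point to keep in mind is to apply \eqref{GNineq} to $|v_n|$ so that the upper bound covers $\big|\int v_n^{m+1}\big|$ regardless of the parity of $m+1$.
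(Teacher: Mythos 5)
Your proposal is correct and takes essentially the same route as the paper: for (a) you invoke the Gagliardo--Nirenberg inequality and the subcritical scaling $\theta=\frac{d(m-1)}{\alpha}<2$ (the paper absorbs the $y^{\theta}$ term via Young's inequality rather than appealing to coercivity, but these are the same estimate), and for (b) you use the lower bound $E[v_n]\geq -\frac{1}{m(m+1)}\|v_n\|_{L^{m+1}}^{m+1}$ together with $\mathcal{I}_q<0$. The only cosmetic difference is that you prove (b) directly while the paper argues by contradiction; also note that the $|v_n|$ substitution you flag is unnecessary since $\bigl|\int v_n^{m+1}\bigr|\leq \|v_n\|_{L^{m+1}}^{m+1}$ already lets one apply \eqref{GNineq} to $v_n$ itself.
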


\begin{proof}
We apply \eqref{eq:estab1} to get the following upper bound
\begin{equation}
\begin{aligned}
\frac{1}{2}\|v_n\|_{H^{\frac{\alpha}{2}}}^2&=E[v_n]+\frac{1}{2}\int |v_n|^2+\frac{1}{m(m+1)}\int v_n^{m+1}  \\
& \leq E[v_n]+\delta_0 \|v_n\|_{H^{\frac{\alpha}{2}}}^2+C(q),
\end{aligned}
\end{equation}
for any $\delta_0 > 0$, which establishes (a).  
Next we deduce (b). Arguing by contradiction, we assume that there exists a subsequence $\{v_{n_k}\}$ of $\{v_{n}\}$ such that $\|v_{n_k}\|_{L^{m+1}} \to 0$ as $n_k \to \infty$. Then
\begin{equation}
\begin{aligned}
\mathcal{I}_q=\lim_{n_k \to \infty} E[v_{n_k}] \geq \lim_{n_k \to \infty} -\frac{1}{m(m+1)}\|v_{n_k}\|_{L^{m+1}}^{m+1}=0,
\end{aligned}
\end{equation}
which, according to Lemma \ref{Lemmanegat} contradicts the fact that $\mathcal{I}_q<0$. Consequently, (b) holds. 
\end{proof}

We apply the concentration-compactness argument to a minimizing sequence $\{v_n\}$.
Let a sequence of non-decreasing functions $\rho_n: [0,\infty) \rightarrow [0,q]$ be defined by
\begin{equation*}
\rho_n(r)=\sup_{z \in \mathbb{R}^d} \, \frac{1}{2}\int_{B(z,r)}|v_n|^2 \, dx,
\end{equation*}
where $B(z,r)=\{x \in \mathbb{R}^d\, :\, |x-z|<r\}$. Then there exists a subsequence of $\{\rho_n\}$, still denoted by $\{\rho_n\}$, which converges uniformly on compact sets to a non-decreasing function $\rho:[0,\infty) \rightarrow [0,q]$. Define $\lambda=\lim\limits_{r\to \infty}\rho(r)$. Then $0\leq \lambda \leq q$. The three mutually exclusive cases of  $\lambda$ values are
\begin{itemize}
\item vanishing: $\lambda=0$,
\item dichotomy: $0<\lambda<q$,
\item compactness: $\lambda=q$.
\end{itemize}
We show that vanishing and dichotomy do not hold (Lemmas \ref{lemmavanish}-\ref{rulevanish}). In the case of compactness, the following result shows that the set of minimizers $\mathcal{G}_q$ is not empty.

\begin{lemma}\label{lemmacompactres}
Let $0<\alpha<2$, $1<m<\frac{2\alpha}{d}+1$. Assume $\lambda=q$. Then there exists a sequence of real numbers $\{z_n\}$ such that
\begin{itemize}
\item[(a)] for every $\eta<q$, there exists $r=r(\eta)$ such that
\begin{equation}
\frac{1}{2} \int_{B(z_n,r)}|v_n|^2 \, dx > \eta,
\end{equation}
for all $n$ large enough.
\item[(b)] The sequence $\{\widetilde{v}_n\}$ defined by $\widetilde{v}_n(x)=v_n(x+z_{n})$ has a subsequence, which converges in $H^{\frac{\alpha}{2}}(\mathbb{R}^d)$ to a function $g \in \mathcal{G}_q$. In other words, $\mathcal{G}_q$ is not empty. 
\end{itemize}
\end{lemma}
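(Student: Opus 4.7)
\textbf{Proof plan for Lemma \ref{lemmacompactres}.} The plan is to run the compactness branch of the concentration-compactness alternative, then upgrade weak convergence in $H^{\frac{\alpha}{2}}$ to strong convergence by using the minimality of the energy together with the mass preservation.

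For part (a), I first fix a threshold $\eta_{0}\in(\tfrac{q}{2},q)$. Since $\lambda=q$, we have $\rho(r)\to q$ as $r\to\infty$, so there exists $r_{0}$ with $\rho(r_{0})>\eta_{0}$; by uniform convergence of $\rho_{n}$ on compact sets, $\rho_{n}(r_{0})>\eta_{0}$ for $n$ large. The definition of $\rho_{n}$ as a supremum then produces points $z_{n}\in\mathbb{R}^{d}$ with $\tfrac{1}{2}\int_{B(z_{n},r_{0})}|v_{n}|^{2}>\eta_{0}$. I claim the same sequence $\{z_{n}\}$ serves for every $\eta<q$. Given such $\eta$, pick $\eta_{1}\in(\eta,q)$ with $\eta_{1}+\eta_{0}>q$ and an $r_{1}$ with $\rho_{n}(r_{1})>\eta_{1}$, obtaining auxiliary centers $z_{n}'$. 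If $|z_{n}-z_{n}'|>r_{0}+r_{1}$, the balls $B(z_{n},r_{0})$ and $B(z_{n}',r_{1})$ are disjoint and their combined $L^{2}$ mass exceeds $2\eta_{0}+2\eta_{1}>2q$, contradicting $M[v_{n}]=q$. Hence $B(z_{n}',r_{1})\subset B(z_{n},r_{0}+2r_{1})$, and (a) holds with $r(\eta)=r_{0}+2r_{1}$.

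For part (b), set $\widetilde{v}_{n}(x)=v_{n}(x+z_{n})$. By Lemma \ref{lemmaUnibound}(a) this sequence is bounded in $H^{\frac{\alpha}{2}}(\mathbb{R}^{d})$, so after passing to a subsequence $\widetilde{v}_{n}\rightharpoonup g$ weakly in $H^{\frac{\alpha}{2}}$ and, by the Rellich-Kondrachov theorem, strongly in $L^{2}_{\mathrm{loc}}$ and $L^{m+1}_{\mathrm{loc}}$. The tightness given by (a) upgrades the local $L^{2}$ convergence to global: for any $\varepsilon>0$ choose $r$ so that $\int_{|x|>r}|\widetilde{v}_{n}|^{2}<\varepsilon$ for all large $n$, and combine with $L^{2}(B(0,r))$ convergence. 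In particular, $\|g\|_{L^{2}}^{2}=\lim\|\widetilde{v}_{n}\|_{L^{2}}^{2}=2q$, i.e., $M[g]=q$.

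Next I must show $E[g]=\mathcal{I}_{q}$ and then strong $H^{\frac{\alpha}{2}}$ convergence. Using the $H^{\frac{\alpha}{2}}$-boundedness and the $L^{2}$ strong convergence together with the Gagliardo-Nirenberg inequality \eqref{GNineq}, I interpolate to obtain
\begin{equation*}
\|\widetilde{v}_{n}-g\|_{L^{m+1}}^{m+1}\lesssim \|D^{\frac{\alpha}{2}}(\widetilde{v}_{n}-g)\|_{L^{2}}^{\frac{d(m-1)}{\alpha}}\,\|\widetilde{v}_{n}-g\|_{L^{2}}^{(m+1)-\frac{d(m-1)}{\alpha}}\longrightarrow 0,
\end{equation*}
since $m<\tfrac{2\alpha}{d}+1$ makes the second exponent strictly positive; hence $\int \widetilde{v}_{n}^{m+1}\to\int g^{m+1}$. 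Coupled with $E[\widetilde{v}_{n}]\to\mathcal{I}_{q}$, this yields $\|D^{\frac{\alpha}{2}}\widetilde{v}_{n}\|_{L^{2}}^{2}\to 2\mathcal{I}_{q}+\tfrac{2}{m(m+1)}\int g^{m+1}$. Weak lower semicontinuity of the $\dot{H}^{\frac{\alpha}{2}}$-seminorm forces $\|D^{\frac{\alpha}{2}}g\|_{L^{2}}^{2}\leq 2\mathcal{I}_{q}+\tfrac{2}{m(m+1)}\int g^{m+1}$, which rearranges to $E[g]\leq\mathcal{I}_{q}$; since $M[g]=q$, the reverse inequality is automatic, so $g\in\mathcal{G}_{q}$ and the limit $\|D^{\frac{\alpha}{2}}\widetilde{v}_{n}\|_{L^{2}}\to\|D^{\frac{\alpha}{2}}g\|_{L^{2}}$ promotes weak convergence in the Hilbert space $\dot{H}^{\frac{\alpha}{2}}$ to strong convergence, and combined with strong $L^{2}$ convergence, to strong $H^{\frac{\alpha}{2}}$ convergence.

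The main obstacle is the step establishing a \emph{single} translation sequence $\{z_{n}\}$ that traps arbitrarily much of the mass in a fixed ball; the subadditivity-type argument via disjoint balls above is the most delicate point, since the rest of the proof is by now standard once the tight concentration in one moving ball is secured.
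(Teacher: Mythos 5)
Your proposal is correct and follows essentially the same route as the paper: part (a) is obtained via the "disjoint-balls-overflow-the-mass" contradiction (the paper phrases this as "the balls must intersect," you quantify it with the thresholds $\eta_0,\eta_1$ — same idea), and part (b) runs the standard compactness-branch argument, namely weak $H^{\alpha/2}$ convergence plus tightness from (a) to get strong $L^2$ convergence, then Gagliardo–Nirenberg to pass to $L^{m+1}$, then weak lower semicontinuity of $\|D^{\alpha/2}\cdot\|_{L^2}$ combined with $E[\widetilde v_n]\to\mathcal I_q$ to force $E[g]=\mathcal I_q$ and to upgrade to norm convergence of the $\dot H^{\alpha/2}$ seminorm, hence strong $H^{\alpha/2}$ convergence. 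The only cosmetic difference is that the paper invokes a Cantor diagonal argument on nested balls $B(0,r_k)$ where you phrase the same step as a tightness/$\varepsilon$ argument; both are equivalent. One small imprecision: the positivity of the exponent $(m+1)-\tfrac{d(m-1)}{\alpha}$ is really the energy-subcritical condition $m<m_\ast$, which is implied by (but not identical to) $m<\tfrac{2\alpha}{d}+1$; this does not affect the argument.
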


\begin{proof}
We incorporate the idea from \cite[Lemma 2.5]{Albert1999}. 
Since  $\lambda=q$, there exists $r_0>0$ such that for 
all $n$ sufficiently large
\begin{equation}
\rho_n(r_0)=\sup_{z\in \mathbb{R}^d}\frac{1}{2}\int_{B(z,r_0)}|v_n|^2   >\frac{q}{2}.
\end{equation}
Then for each sufficiently large $n$, there exists $z_n\in \mathbb{R}^d$, such that
\begin{equation}
\frac{1}{2}\int_{B(z_n,r_0)}|v_n|^2 >\frac{q}{2}.
\end{equation}
Let $\eta<q$. By the previous argument, we may assume that $\eta>\frac{q}{2}$. Since  $\lambda=q$, we can find a subsequence $\{ z_n(\eta) \} \in \mathbb{R}^d$, $r_0(\eta)>0$, and $N(\eta)>0$ such that if $n\geq N(\eta)$, we get
\begin{equation}
\frac{1}{2}\int_{B(z_n(\eta),r_0(\eta))}|v_n|^2>\eta.
\end{equation}
Given that $\|v_n\|_{L^2}^2=2q$, it must follow that for large $n$, the balls $B(z_n,r_0)$ and $B(z_n(\eta),r_0(\eta))$ must intersect. Thus, setting $r=2r_0(\eta)+r_0$, we get $B(z_n(\eta),r_0(\eta))\subset B(z_n,r)$. This observation completes the proof of (a).

Now, by part (a), we have that for every $k>0$ integer, there exists $r_k \in \mathbb{R}^{+}$ such that for all $n$ large
\begin{equation}
\frac{1}{2} \int_{B(0,r_k)}|\widetilde{v}_n|^2   > q-\frac{1}{k}.
\end{equation}
By Lemma \ref{lemmaUnibound}, the sequence $\{\widetilde{v}_n\}$ is uniformly bounded in $H^{\frac{\alpha}{2}}(\mathbb{R}^d)$, thus, by compactness embedding of fractional Sobolev spaces on open Lipschitz domains with bounded boundary in $\mathbb{R}^d$ (see 
\cite[Section 7]{NezzaPalatucciValdinoci2012} and references therein), there exists a subsequence $\{\widetilde{v}_n\}$ (still denoted by $\{\widetilde{v}_n\}$), converging in $L^2(B(0,r_k))$ to a function $v \in L^2(B(0,r_k))$ such that
\begin{equation}
\frac{1}{2}\int_{B(0,r_k)}|v|^2  > q-\frac{1}{k}.
\end{equation}
Therefore, we can apply a Cantor diagonalization argument and the fact that $\|\widetilde{v}_n\|_{L^2}^2=2q$ for all $n$, to find a subsequence $\{\widetilde{v}_n\}$ converging in $L^2(\mathbb{R}^d)$ to some $v\in L^2(\mathbb{R}^d)$ with $\|v\|_{L^2}^2=2q$.  Moreover, by uniqueness of the weak limit, up to some  subsequence, $\{v_n\}$ converges weakly to $v$ in $H^{\frac{\alpha}{2}}(\mathbb{R}^d)$, and
\begin{equation*}
\|v\|_{H^{\frac{\alpha}{2}}} \leq \liminf_{n\to \infty} \|\widetilde{v}_n\|_{H^{\frac{\alpha}{2}}}\leq C,
\end{equation*}
where the constant is determined by the part (a) of Lemma \ref{lemmaUnibound}. Hence, an application of the Gagliardo-Nirenberg inequality \eqref{GNineq} yields
\begin{equation*}
   \|v-\widetilde{v}_n\|_{L^{m+1}}^{m+1} \lesssim  \|v-\widetilde{v}_n\|_{H^{\frac{\alpha}{2}}}^{\frac{d(m-1)}{\alpha}}\|v-\widetilde{v}_n\|_{L^2}^{(m+1)-\frac{d(m-1)}{\alpha}}\lesssim \|v-\widetilde{v}_n\|_{L^2}^{(m+1)-\frac{d(m-1)}{\alpha}} \to 0,
\end{equation*}
as $n \to \infty$. Collecting the previous estimates for the obtained above sequence $\{\tilde{v}_n\}$ and recalling $M[\tilde{v}_n]=M[v]$, we get
\begin{equation}
\begin{aligned}
E[v]&=\frac{1}{2}\|v\|_{H^{\frac{\alpha}{2}}}^2- 
M[v]-\frac{1}{m(m+1)}\int v^{m+1} \\
&\leq \lim_{n \to \infty} \Big( E[\widetilde{v}_n]+\frac{1}{m(m+1)}\big|\|v\|_{L^{m+1}}^{m+1}-\|\widetilde{v}_n\|_{L^{m+1}}^{m+1}\big|\Big) 
&=\lim_{n\to \infty}  E[\widetilde{v}_n]=\mathcal{I}_q.
\end{aligned}
\end{equation}
Hence, $v\in \mathcal{G}_q$, and thus, $\mathcal{G}_q$ is non-empty. Furthermore, since by definition $\mathcal{I}_q \leq E[{v}_n]$, we have  $\lim\limits_{n\to \infty }E[\widetilde{v}_n]=E[v]$, and  using the fact that $\widetilde{v}_n\to v$ in the $L^{m+1}$-norm, we find that $\|D^{\frac{\alpha}{2}}\widetilde{v}_n\|_{L^2}\to \|D^{\frac{\alpha}{2}}v\|_{L^2}$ as $n \to \infty$. 
This convergence, also the weak convergence of $\tilde{v}_n$ to $v$ in $H^{\frac{\alpha}{2}}(\mathbb{R}^d)$ and the properties of a Hilbert space yield strong convergence of the sequence $\{\widetilde{v}_n\}$ to $v$ in the $H^{\frac{\alpha}{2}}$-norm. This completes the proof of (b).
\end{proof}

The next results will be useful to rule out the vanishing case $\lambda=0$.

\begin{lemma}\label{lemmavanish}
Let $0<\alpha<2$, $1<m<\frac{2\alpha}{d}+1$, $B>0$ and $\delta>0$. Then there exists $\eta=\eta(B,\delta)$ such that if $v\in H^{\frac{\alpha}{2}}(\mathbb{R}^d)$ with $\|v\|_{H^{\frac{\alpha}{2}}}\leq B$ and $\|v\|_{L^{m+1}}\geq \delta$,
we have 
\begin{equation}
\sup_{z\in \mathbb{R}^d} \int_{B(z,2 \sqrt{d}\, )}|v|^{m+1}  \geq \eta.
\end{equation}
\end{lemma}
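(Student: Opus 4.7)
The plan is a covering argument in the spirit of Lions' concentration-compactness method, combining a partition of unity with the Gagliardo-Nirenberg inequality \eqref{GNineq}. First, I fix a smooth nonnegative partition of unity $\{\phi_k\}_{k\in\mathbb{Z}^d}$ on $\mathbb{R}^d$ satisfying $\sum_k\phi_k^2\equiv 1$, with each $\phi_k$ supported in $B(z_k,2\sqrt{d})$ for a lattice of centers $\{z_k\}$, uniform $C^1$ bounds on $\phi_k$, and finite overlap number $N=N(d)$. The bounded overlap together with $\sum_k\phi_k^2=1$ forces the pointwise lower bound $\sum_k\phi_k^{m+1}\geq c_0(d,m)>0$, hence
$$
\|v\|_{L^{m+1}}^{m+1}\lesssim \sum_k\|\phi_k v\|_{L^{m+1}}^{m+1}.
$$

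Next, I apply \eqref{GNineq} to each $\phi_k v\in H^{\alpha/2}(\mathbb R^d)$:
$$
\|\phi_k v\|_{L^{m+1}}^{m+1}\leq C\,\|D^{\alpha/2}(\phi_k v)\|_{L^2}^{a}\,\|\phi_k v\|_{L^2}^{b},\qquad a\defeq\tfrac{d(m-1)}{\alpha},\; b\defeq(m+1)-a.
$$
The subcritical hypothesis $m<\frac{2\alpha}{d}+1$ gives $0<a<2$ and $2b/(2-a)\geq 2$. Summing over $k$ and applying Hölder with conjugate exponents $2/a$ and $2/(2-a)$,
$$
\sum_k\|D^{\alpha/2}(\phi_k v)\|_{L^2}^{a}\|\phi_k v\|_{L^2}^{b}\leq\Bigl(\sum_k\|D^{\alpha/2}(\phi_k v)\|_{L^2}^{2}\Bigr)^{a/2}\Bigl(\sum_k\|\phi_k v\|_{L^2}^{2b/(2-a)}\Bigr)^{(2-a)/2}.
$$
In the second factor I extract the supremum via $\|\phi_k v\|_{L^2}^{2b/(2-a)}\leq(\sup_j\|\phi_j v\|_{L^2})^{2b/(2-a)-2}\|\phi_k v\|_{L^2}^{2}$ and use $\sum_k\|\phi_k v\|_{L^2}^2=\|v\|_{L^2}^2\leq B^2$. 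Assuming also the bound $\sum_k\|D^{\alpha/2}(\phi_k v)\|_{L^2}^2\lesssim B^2$, the arithmetic identities $[2b/(2-a)-2](2-a)/2=a+b-2=m-1$ and $a/2+(2-a)/2=1$ collapse the $B$-powers to $B^2$ and yield
$$
\delta^{m+1}\leq\|v\|_{L^{m+1}}^{m+1}\leq C\,B^{2}\Bigl(\sup_{z\in\mathbb{R}^d}\|v\|_{L^2(B(z,2\sqrt{d}))}\Bigr)^{m-1}.
$$

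The principal obstacle is precisely the missing bound $\sum_k\|D^{\alpha/2}(\phi_k v)\|_{L^2}^2\lesssim\|v\|_{H^{\alpha/2}}^2$, since the nonlocal operator $D^{\alpha/2}$ does not respect the partition of unity. I would prove it via the Gagliardo-type seminorm representation of $\dot H^{\alpha/2}$: splitting $\phi_k(x)v(x)-\phi_k(y)v(y)=\phi_k(x)(v(x)-v(y))+v(y)(\phi_k(x)-\phi_k(y))$, the first piece sums to $\iint(\sum_k\phi_k^2(x))|v(x)-v(y)|^2|x-y|^{-d-\alpha}dx\,dy=\|v\|_{\dot H^{\alpha/2}}^2$, while the second uses the pointwise estimate $\sum_k(\phi_k(x)-\phi_k(y))^2\lesssim\min(|x-y|^2,1)$ (from uniform $C^1$ bounds on $\phi_k$ and finite overlap) together with the integrability of $\min(|x-y|^2,1)|x-y|^{-d-\alpha}$ for $\alpha<2$, yielding $\lesssim\|v\|_{L^2}^2$. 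The commutator expansion in Proposition \ref{fractionalDeriv} applied to each $\phi_k v$ provides an equivalent route.

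Finally, from $\sup_{z}\|v\|_{L^2(B(z,2\sqrt d))}\geq c(B,\delta)>0$, Hölder on the fixed-radius ball yields $\|v\|_{L^2(B(z,2\sqrt d))}\leq C_d\,\|v\|_{L^{m+1}(B(z,2\sqrt d))}$, upgrading the $L^2$ lower bound to the claimed $\sup_z\int_{B(z,2\sqrt d)}|v|^{m+1}\,dx\geq\eta=\eta(B,\delta)>0$.
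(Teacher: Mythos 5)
Your proposal is correct, but it follows a genuinely different route than the paper's. The paper proves the required $\ell^2(H^{\alpha/2})$ bound on the localization operator $T$ by interpolation between the easy $L^2$ and $H^1$ cases, whereas you instead expand the Gagliardo seminorm of $\phi_k v$, split via $\phi_k(x)v(x)-\phi_k(y)v(y)=\phi_k(x)(v(x)-v(y))+v(y)(\phi_k(x)-\phi_k(y))$, and exploit the finite-overlap bound $\sum_k(\phi_k(x)-\phi_k(y))^2\lesssim\min(|x-y|^2,1)$ and the integrability of $\min(|w|^2,1)|w|^{-d-\alpha}$ (valid precisely because $0<\alpha<2$). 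Both routes are sound; yours is more self-contained and avoids citing complex interpolation, at the cost of using the equivalence $\|D^{\alpha/2}u\|_{L^2}\sim[u]_{W^{\alpha/2,2}}$, which requires $\alpha/2<1$ — fine here. After that, the two arguments diverge more sharply: the paper argues by contradiction, assuming the reverse inequality $\|\varphi_{j}v\|_{H^{\alpha/2}}^2>(1+C_2\|v\|_{L^{m+1}}^{-(m+1)})\|\varphi_j v\|_{L^{m+1}}^{m+1}$ for all $j$ and summing to reach $C_0B^2>C_0B^2$, then closes with the Sobolev embedding $H^{\alpha/2}\hookrightarrow L^{m+1}$ applied to the good piece. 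You instead apply the Gagliardo--Nirenberg inequality to each localized piece, sum via H\"older with exponents $2/a$ and $2/(2-a)$ where $a=d(m-1)/\alpha<2$ (here is where subcriticality enters), peel off $(\sup_j\|\phi_jv\|_{L^2})^{m-1}$ from the second factor, and finish with H\"older on the fixed ball to pass from $L^2$ to $L^{m+1}$. This is closer to Lions' original anti-vanishing argument, and it makes the role of the exponent condition $a<2$ (i.e.\ $m<\tfrac{2\alpha}{d}+1$) explicit, which the paper's contradiction argument leaves implicit in the Sobolev embedding. The arithmetic $[2b/(2-a)-2](2-a)/2=m-1$ checks out, and the final H\"older upgrade $\|v\|_{L^2(B)}\lesssim\|v\|_{L^{m+1}(B)}$ on a bounded ball is valid because $m+1>2$. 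No gaps.
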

\begin{proof}
We take the idea from \cite{ABS1997,Albert1999} and develop it for the entire range of $\alpha$ and the higher dimensional setting. 
Let $\varphi \in C^{\infty}_c(\mathbb{R})$ such that $0\leq \varphi \leq 1$, it is supported on $[-2,2]$, and $\sum_{j \in \mathbb{Z}} \varphi_j(x)=1$, where $\varphi_j(x) \stackrel{def}{=} \varphi(x-j)$, $j\in \mathbb{Z}$. We define the map $T: H^{s}(\mathbb{R}^d) \to l^2 (H^s(\mathbb{R}^d))$ by
\begin{equation*}
Tf=\{\varphi_{1,j_1}\dots \varphi_{d,j_d} f\}_{j_k \in \mathbb Z, \, k=1,\dots,d \,},
\end{equation*}
where $\varphi_{l,j_l}(x_1,\dots,x_d)=\varphi(x_l-j_{l})$, for all $l=1,\dots,d$. It is not difficult to see that $T$ is bounded when $s=0$ and 
$s=1$. 
Then by interpolation, e.g., see \cite{BL1976}, 
it is bounded when $s=\frac{\alpha}{2}$, in other words, there exits $C_0>0$ such that for all $f \in H^{\frac{\alpha}{2}}(\mathbb{R}^d)$,
\begin{equation}\label{E:T1}
\begin{aligned}
\qquad \sum_{ j_k \in \mathbb Z, \, k=1,\dots,d}
\|\varphi_{1,j_1}\dots \varphi_{d,j_d} f\|_{H^{\frac{\alpha}{2}}}^2 \leq C_0 \|f\|_{H^{\frac{\alpha}{2}}}^2.
\end{aligned}
\end{equation}
Now, let $0<C_1 \leq \sum_{j_k \in \mathbb Z, \, k=1,\dots,d}$  $|\varphi(x_1-j_1)|^{m+1}\dots|\varphi(x_d-j_d)|^{m+1}$ for all $(x_1,\dots,x_d) \in \mathbb{R}^d$. Such a constant exists as the support of $\varphi$ reduces the above sum to a finite number of elements independent of $x=(x_1, \dots, x_d)\in \mathbb{R}^d$. Setting $C_2=\frac{C_0B^2}{C_1}$, we claim that for every function $v \in H^{\frac{\alpha}{2}}(\mathbb{R}^d)$ with $v \neq 0$, and satisfying the hypothesis of Lemma \ref{lemmavanish}, there exist $j_{0,l} \in \mathbb{Z}$, $1\leq l \leq d$, such that 
\begin{equation}\label{eq:estab2}
\|\varphi_{1,j_{0,1}}\dots \varphi_{d,j_{0,d}} v\|_{H^{\frac{\alpha}{2}}}^2\leq (1+C_2\|v\|_{L^{m+1}}^{-(m+1)})\|\varphi_{1,j_{0,1}}\dots \varphi_{d,j_{0,d}} v\|_{L^{m+1}}^{m+1}.
\end{equation}
Otherwise, it must be the case that
\begin{equation}\label{E:T2}
\|\varphi_{1,j_{1}}\dots \varphi_{d,j_{d}} v\|_{H^{\frac{\alpha}{2}}}^2>(1+C_2\|v\|_{L^{m+1}}^{-(m+1)})\|\varphi_{1,j_{1}}\dots \varphi_{d,j_{d}} v\|_{L^{m+1}}^{m+1},
\end{equation}
for every $j_k \in \mathbb Z, \, k=1,\dots,d$. Summing over $j_{1},\dots,j_d$, the inequality \eqref{E:T2} together with \eqref{E:T1} yield
\begin{equation*}
\begin{aligned}
C_0\|v\|_{H^{\frac{\alpha}{2}}}^2&>(1+C_2\|v\|_{L^{m+1}}^{-(m+1)})\sum_{j_k \in \mathbb Z, \, k=1,\dots,d}
\|\varphi_{1,j_{1}}\dots \varphi_{d,j_{d}} v\|_{L^{m+1}}^{m+1}\\
&\geq (1+C_2\|v\|_{L^{m+1}}^{-(m+1)})C_1\|v\|_{L^{m+1}}^{m+1}.
\end{aligned}
\end{equation*}
Since $C_1C_2=C_0B^2$, we get
\begin{equation*}
C_0B^2>C_1\|v\|_{L^{m+1}}^{m+1}+C_0B^2,
\end{equation*}
which is a contradiction. Thus, \eqref{eq:estab2} holds true for some integers $j_{0,1},\dots j_{0,d}$.  By Sobolev embedding, there exists a uniform constant $C_3>0$ such that
\begin{equation*}
\begin{aligned}
\|\varphi_{1,j_{0,1}}\dots \varphi_{d,j_{0,d}} v\|_{L^{m+1}}\leq C_3 \|\varphi_{1,j_{0,1}}\dots \varphi_{d,j_{0,d}} v\|_{H^{\frac{\alpha}{2}}}.
\end{aligned}
\end{equation*}
Then the above inequality and \eqref{eq:estab2} yield
\begin{equation*}
\frac{1}{C_3^2}\|\varphi_{1,j_{0,1}}\dots \varphi_{d,j_{0,d}} v\|_{L^{m+1}}^2\leq \Big( 1 +\frac{C_2}{\delta^{m+1}} \Big)\|\varphi_{1,j_{0,1}}\dots \varphi_{d,j_{0,d}} v\|_{L^{m+1}}^{m+1},
\end{equation*}
which can be rewritten as  
\begin{equation*}
\eta \leq \|\varphi_{1,j_{0,1}}\dots \varphi_{d,j_{0,d}} v\|_{L^{m+1}},
\end{equation*}
with $\eta>0$ defined by $\eta^{m-1}=\frac{\delta^{m+1}}{C_3^2(\delta^{m+1}+C_2)}$. 
Checking the support of $\varphi_{1,j_{0,1}}\dots \varphi_{d,j_{0,d}}$
in $d$ dimensions, the desired conclusion is now a consequence of the above inequality. 
\end{proof}

The next lemma will be used to exclude the case of dichotomy ($0<\lambda<q$).

\begin{lemma}\label{dichotomyLemma} 
Given $0<\alpha < 2$, assume $1<m<\frac{2\alpha}{d}+1$. Let $\{v_n\}$ be the minimizing sequence as in Lemma \ref{lemmaUnibound}. Then for every $\delta>0$ there exist an integer $N\geq 1$ integer and sequences $\{g_N,g_{N+1},\dots\}$ and $\{h_N,h_{N+1},\dots\}$ of functions in $H^{\frac{\alpha}{2}}(\mathbb{R}^d)$ such that for every $n\geq N$,
\begin{itemize}
\item[(a)] $|M[g_n]-\lambda|<\delta$,
\item[(b)] $|M[h_n]-(q-\lambda)|<\delta$,
\item[(c)] $E[v_n]\geq E[g_n]+E[h_n]-\delta$.
\end{itemize}
\end{lemma}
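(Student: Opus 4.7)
The plan is to implement Lions' classical dichotomy construction, adapted to accommodate the nonlocal operator $D^{\alpha/2}$. Using the uniform convergence of $\rho_n \to \rho$ on compact sets together with the definition $\lambda = \lim_{r\to\infty}\rho(r)$, I would first fix a scale $R_0$ with $\rho(R_0) > \lambda - \delta/8$ and a sequence $R_n \to \infty$ such that $\rho_n(R_n) < \lambda + \delta/8$ for all $n \geq N$. By the definition of $\rho_n$, choose $z_n \in \mathbb{R}^d$ almost realizing the supremum in $\rho_n(R_0)$. Next, pick smooth cutoffs $\phi \in C_c^\infty(\mathbb{R}^d)$ with $\phi \equiv 1$ on $B(0,1)$ and $\supp \phi \subset B(0,2)$, and $\chi \in C^\infty(\mathbb{R}^d)$ with $\chi \equiv 0$ on $B(0,1/2)$ and $\chi \equiv 1$ outside $B(0,1)$, and define
\[
g_n(x) = \phi\bigl((x-z_n)/R_0\bigr)\, v_n(x), \qquad h_n(x) = \chi\bigl((x-z_n)/R_n\bigr)\, v_n(x).
\]
By construction, $\supp g_n \subset B(z_n, 2R_0)$ and $\supp h_n \subset \mathbb{R}^d \setminus B(z_n, R_n/2)$, so their supports are separated by a distance $\geq R_n/2 - 2R_0 \to \infty$.

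Properties (a) and (b) follow directly from the construction: $\|g_n\|_{L^2}^2$ is sandwiched between $\|v_n\|_{L^2(B(z_n,R_0))}^2 > 2\lambda - \delta/2$ and $\|v_n\|_{L^2(B(z_n,2R_0))}^2 \leq 2\rho_n(R_n) < 2\lambda + \delta/4$ for $n$ large, while the analogous estimate for $h_n$ follows from $M[v_n]=q$ and the pointwise bounds on $\chi$. Writing $v_n = g_n + h_n + r_n$ with $r_n$ supported in the annular transition region, a mass balance yields $\|r_n\|_{L^2}^2 = O(\delta)$. For the energy inequality (c), the $L^{m+1}$ part splits well because $g_n$ and $h_n$ have disjoint supports, so $(g_n + h_n)^{m+1} = g_n^{m+1} + h_n^{m+1}$ pointwise; the discrepancy $\int v_n^{m+1} - \int g_n^{m+1} - \int h_n^{m+1}$ is a polynomial in $g_n, h_n, r_n$ whose every term contains at least one factor of $r_n$, and is controlled via Gagliardo-Nirenberg together with the uniform $H^{\alpha/2}$-bound from Lemma \ref{lemmaUnibound} and the smallness of $\|r_n\|_{L^2}$.

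The delicate ingredient is the kinetic term, where one must show
\[
\|D^{\alpha/2} v_n\|_{L^2}^2 \geq \|D^{\alpha/2} g_n\|_{L^2}^2 + \|D^{\alpha/2} h_n\|_{L^2}^2 - O(\delta).
\]
Expanding $v_n = g_n + h_n + r_n$ and using $\|D^{\alpha/2} r_n\|_{L^2}^2 \geq 0$, matters reduce to bounding the mixed term $\langle D^{\alpha/2} g_n, D^{\alpha/2} h_n \rangle = \langle D^\alpha g_n, h_n \rangle$ together with two analogous cross terms involving $r_n$. For any $x \in \supp h_n$, the distance from $x$ to $\supp g_n$ exceeds $R_n/4$ for $n$ large, so the singular integral representation of $D^\alpha$ yields $|D^\alpha g_n(x)| \lesssim \|g_n\|_{L^1}\, \langle x - z_n \rangle^{-(d+\alpha)}$; a Cauchy-Schwarz estimate then bounds the mixed term by $C\, R_n^{-(d/2+\alpha)}\, \|g_n\|_{L^1}\, \|h_n\|_{L^2}$, which tends to $0$. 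The $r_n$-cross terms are handled in the same spirit, invoking Proposition \ref{fractionalDeriv} to trade commutators $[D^{\alpha/2}, \phi]$ and $[D^{\alpha/2}, \chi]$ against the separation of supports. The main obstacle is precisely this nonlocality: unlike the local Dirichlet form, $D^{\alpha/2}$ couples arbitrarily distant regions through the kernel $|x - y|^{-d-\alpha}$, and it is the algebraic decay of this kernel together with the freedom to let $R_n \to \infty$ that supplies the required quantitative smallness. Taking $N$ large then absorbs every error into $\delta$ and yields (c).
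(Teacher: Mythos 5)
Your decomposition departs from the paper's. You use two well-separated scales, $R_0$ fixed and $R_n\to\infty$, producing $g_n$ and $h_n$ with \emph{disjoint} supports and a remainder $r_n=v_n-g_n-h_n$ living in the widening annulus; the paper instead takes a Pythagorean partition of unity $\vartheta^2+\varphi^2=1$ at a \emph{single} growing scale $r$ and sets $g_n=\vartheta_r(\cdot-z_n)v_n$, $h_n=\varphi_r(\cdot-z_n)v_n$, so that $g_n$ and $h_n$ overlap but there is no remainder and $g_n^2+h_n^2=v_n^2$ exactly. Your choice cleanly handles the far cross term $\langle D^{\alpha/2}g_n,D^{\alpha/2}h_n\rangle$ by the decay of the kernel, but it introduces a problem the paper's version does not have, and that is where the proposal breaks down.

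The gap is in the kinetic cross terms involving $r_n$. After dropping $\|D^{\alpha/2}r_n\|_{L^2}^2\ge 0$, you must show $\langle D^{\alpha/2}g_n,D^{\alpha/2}r_n\rangle$ and $\langle D^{\alpha/2}h_n,D^{\alpha/2}r_n\rangle$ are $O(\delta)$. But $\operatorname{supp} r_n$ is \emph{adjacent to} (indeed overlaps) $\operatorname{supp} g_n$ on the fixed annulus $R_0<|x-z_n|<2R_0$, and likewise adjacent to $\operatorname{supp} h_n$ on $R_n/2<|x-z_n|<R_n$, so the kernel-decay argument you use for $\langle D^{\alpha/2}g_n,D^{\alpha/2}h_n\rangle$ is unavailable. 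The only smallness you have is $\|r_n\|_{L^2}=O(\delta^{1/2})$, while $\|D^{\alpha/2}r_n\|_{L^2}$ is merely bounded, so Cauchy--Schwarz gives $O(1)$, not $O(\delta)$. Your appeal to ``trading commutators against separation of supports'' does not rescue this: Proposition~\ref{fractionalDeriv} gives $\|[D^{\alpha/2},\phi(\cdot/R_0)]\|_{L^2\to L^2}\lesssim R_0^{-\alpha/2}$, which is a \emph{fixed} constant because $R_0$ is fixed once $\delta$ is chosen, and the residual main term is essentially $\int \phi_{R_0}(1-\phi_{R_0}-\chi_{R_n})\,|D^{\alpha/2}v_n|^2$, an integral over the fixed annulus near $R_0$ over which you have no smallness for $|D^{\alpha/2}v_n|^2$. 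This is precisely the obstruction that the Pythagorean partition avoids: with $\vartheta_r^2+\varphi_r^2=1$ one gets the exact identity
\begin{equation*}
\int|D^{\alpha/2}g_n|^2+\int|D^{\alpha/2}h_n|^2
=\int|D^{\alpha/2}v_n|^2+\int [D^{\alpha},\vartheta_r]v_n\,\vartheta_r v_n
+\int[D^{\alpha},\varphi_r]v_n\,\varphi_r v_n,
\end{equation*}
and \emph{both} commutators live at the growing scale $r$, so their operator norms are $O(r^{-m_1})$ for some $m_1>0$ and the error can be made less than $\delta$ by taking $r$ large. Unless you redesign the decomposition to eliminate the short-range cross terms (e.g.\ by also taking the inner cutoff at a growing scale, as the paper does, or by adding a third clever argument to control $\|D^{\alpha/2}r_n\|_{L^2}$), claim (c) does not follow from what you have written.
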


\begin{proof}
Let $\vartheta \in C^{\infty}_c(\mathbb{R}^d)$ be such that $0\leq \vartheta \leq 1$ supported on the set $|x|\leq 2$, with $\vartheta(x)\equiv 1$ when $|x|\leq 1$. Take $\varphi \in C^{\infty}(\mathbb{R}^d)$ be such that $\vartheta^2+\varphi^2=1$ on $\mathbb{R}^d$. For each $r > 0$, we set $\vartheta_r(x)=\vartheta(\frac{x}{r})$, and similarly, we define $\varphi_r(x)$. Recall the definition of the function $\rho(r)$. Then for any fixed $\delta>0$ and for all sufficiently large values of $r$, one has 
\begin{equation*}
\lambda- \delta< \rho(\tfrac{r}{2})\leq \rho(3r)\leq \lambda.
\end{equation*}
For such value of $r$ fixed, we can choose $N$ large such that for all $n \geq N$ we have
\begin{equation*}
\lambda- \delta< \rho_n(\tfrac{r}{2})\leq \rho_n(3r)\leq \lambda+\delta.
\end{equation*}
Thus, for each $n \geq N$, there exists $z_n$ such that 
\begin{equation}\label{eq:estab3}
\frac{1}{2}\int_{B(z_n,r)}|v_n|^2 \geq \frac{1}{2}\int_{B(z_n,\frac{r}{2})}|v_n|^2 >\lambda-\delta,
\end{equation}
and
\begin{equation}\label{eq:estab4}
\frac{1}{2}\int_{B(z_n,2r)}|v_n|^2\leq \frac{1}{2}\int_{B(z_n,3r)}|v_n|^2 <\lambda+\delta.
\end{equation}
In particular, one has
\begin{equation}\label{eq:estab4.0}
    \Big|\frac{1}{2}\int_{\{\frac{r}{2}< |x-z_n|< 3r\}} |v_n|^2\Big|\leq 2\delta.
\end{equation}

Define $g_n(x)=\vartheta_r(x-z_{n})v_n(x)$ and $h_n(x)=\varphi_r(x-z_{n})v_n(x)$. Consequently, the previous estimates show that $g_n$ and $h_n$ satisfy (a) and (b), respectively. 

On the other hand, we have
\begin{equation}\label{eq:estab4.1}
\begin{aligned}
E[g_n]&+E[h_n]=E[v_n]+\frac{1}{2} \int [D^{\alpha},\vartheta_r]v_n\, \vartheta_r v_n +\frac{1}{2} \int [D^{\alpha},\varphi_r]v_n\, \varphi_r v_n  \\
&-\frac{1}{m(m+1)}\int (\vartheta_r^{m+1}-\vartheta_r^2)v_n^{m+1} 
-\frac{1}{m(m+1)}\int (\varphi_r^{m+1}-\varphi_r^2)v_n^{m+1}.
\end{aligned}
\end{equation}
We proceed to estimate each term on the right hand-side of the above identity. Recalling the operators $D^{\alpha, \beta}$ defined in Proposition \ref{fractionalDeriv}, we write
\begin{equation}\label{eq:estab4.2}
\begin{aligned}
\left[D^{\alpha},\vartheta_r \right]v_n=&  
\underbrace{D^{\alpha}(\vartheta_r v_n)-\sum_{|\beta|\leq \alpha}\frac{1}{\beta !}\partial^{\beta}(\vartheta_r)D^{\alpha, \beta}v_n}_{R(\vartheta_r,v_n)}
+\sum_{1 \leq |\beta|\leq \alpha}\frac{1}{\beta !}\partial^{\beta}(\vartheta_r)D^{\alpha, \beta}v_n\\
\end{aligned}
\end{equation}
where we denoted the first two terms by $R(\vartheta_r,v_n)$ and for the second, when needed, we follow the zero convention for the empty summation, that is, if $\alpha<1$, then $\sum_{1\leq |\beta|\leq \alpha}(\cdots)=0$. By Proposition \ref{fractionalDeriv}, we have
\begin{equation}
\begin{aligned}
\|R(\vartheta_r,v_n)\|_{L^{2}}&\lesssim \|D^{\alpha}(\vartheta_r)\|_{L^{\infty}}\|v_n\|_{L^{2}}\lesssim r^{-\alpha}\|v_n\|_{H^{\frac{\alpha}{2}}}\sim r^{-\alpha},
\end{aligned}
\end{equation}
where we used Lemma \ref{lemmaUnibound}. Now, since $|\beta|\leq \alpha$, there exist a multi-index $\beta'=(\beta_1',\dots,\beta_d')$ and $c\neq 0$ such that
\begin{equation}
D^{\alpha, \beta}v_n=c \, \mathcal{R}_1^{\beta_1'}\dots\mathcal{R}_d^{\beta_d'}D^{\alpha-|\beta|}v_n,
\end{equation} 
where $\mathcal{R}_j=\partial_{x_j}D^{-1}$ denotes the Riesz transform operator in the $j$th-direction. Thus, we get
\begin{equation}\label{eq:estab4.3}
\begin{aligned}
\|\sum_{1 \leq |\beta|\leq \alpha}\frac{1}{\beta !}\partial^{\beta}(\vartheta_r)D^{\alpha, \beta}v_n\|_{L^2} \lesssim & \sum_{1 \leq |\beta|\leq \alpha} \frac{1}{r^{|\beta|}}\|\partial^{\beta}\vartheta\|_{L^{\infty}}\|\mathcal{R}_1^{\beta_1'}\dots \mathcal{R}_d^{\beta_d'}D^{\alpha-|\beta|}v_n\|_{L^2}\\
\lesssim & \, \frac{1}{r} \,\|v_n\|_{H^{\frac{\alpha}{2}}}\sim \frac{1}{r},
\end{aligned}
\end{equation} 
provided that $r\geq 1$. Similarly, we estimate $[D^{\alpha},\varphi_r]v_n$: a key observation 
here is to recall the definition of $\varphi$ and $\varphi^2+\vartheta^2=1$, which yield
\begin{equation}
\varphi=1-\widetilde{\varphi}, \, \, \text{ where }\, \, \widetilde{\varphi}=1-\sqrt{1-\vartheta^2}\in C^{\infty}_c(\mathbb{R}^d),
\end{equation}
which implies that in the distributional sense $D^{s,\beta}\varphi=-D^{s,\beta}\widetilde{\varphi}$ for all $s>0$, and $0\leq |\beta| \leq s$. Summarizing, there exists some $m_1>0$ such that
\begin{equation}
\Big|\int [D^{\alpha},\vartheta_r]v_n\, \vartheta_r v_n \Big|+\Big| \int [D^{\alpha},\varphi_r]v_n\, \varphi_r v_n\Big|=O(r^{-m_1}).
\end{equation}
Next, let $\widetilde{\vartheta}\in C^{\infty}_c(\mathbb{R}^d)$ be such that $\supp(\widetilde{\vartheta})\subset \{x:\, \frac{1}{2}\leq |x|\leq 3\}$, $0\leq \widetilde{\vartheta}\leq 1$, and $\widetilde{\vartheta}=1$ on $\{x: \, 1\leq |x|\leq 2\}$. Then, 
recalling the support of ${\vartheta}$, we have 
$\widetilde{\vartheta}^{m+1}(\vartheta^2-\vartheta^{m+1})=(\vartheta^2-\vartheta^{m+1})$. With this, setting $\widetilde{\vartheta}_r(x)=\widetilde{\vartheta}(\frac{x-z_n}{r})$ for all $x\in \mathbb{R}^d$ and applying the Gagliardo-Nirenberg inequality \eqref{GNineq}, we obtain
\begin{equation}\label{eq:estab5}
\begin{aligned}
\Big|\int (\vartheta_r^{2}-\vartheta_r^{m+1})v_n^{m+1}  \Big|\lesssim & \int |\widetilde{\vartheta}_r v_n|^{m+1}  \\ \lesssim  &\|\widetilde{\vartheta}_r v_n\|_{L^2}^{(m+1)-\frac{d(m-1)}{\alpha}}\|D^{\frac{\alpha}{2}}(\widetilde{\vartheta}_r v_n)\|_{L^{2}}^{\frac{d(m-1)}{\alpha}} \\
\lesssim &\,  \delta^{\frac{(m+1)}{2}-\frac{d(m-1)}{2\alpha}}\|D^{\frac{\alpha}{2}}(\widetilde{\vartheta}_r v_n)\|_{L^2}^{\frac{d(m-1)}{\alpha}},
\end{aligned}
\end{equation}
where we used \eqref{eq:estab4.0} to get
\begin{equation}\label{eq:estab6}
\begin{aligned}
\|\widetilde{\vartheta}_r v_n\|_{L^2}^2 \lesssim \int_{\{\frac{r}{2}<|x-z_n|< 3r\}} |v_n|^2 \lesssim \delta.
\end{aligned}
\end{equation}
The estimate for $\|D^{\frac{\alpha}{2}}(\widetilde{\vartheta}_r v_n)\|_{L^2}$ follows from the same argument as in \eqref{eq:estab4.2}. Indeed, we have
\begin{equation}\label{eq:estab6.0}
    \begin{aligned}
    \|D^{\frac{\alpha}{2}}(\widetilde{\vartheta}_r v_n)\|_{L^2}\lesssim \|[D^{\frac{\alpha}{2}},\widetilde{\vartheta}_r]v_n\|_{L^2}+\|\widetilde{\vartheta}_r D^{\frac{\alpha}{2}}v_n\|_{L^2}.
    \end{aligned}
\end{equation}
To use Proposition \ref{fractionalDeriv},  we write
\begin{equation*}
\begin{aligned}
\, [D^{\frac{\alpha}{2}},\widetilde{\vartheta}_r]v_n=R^{\ast}(\widetilde{\vartheta}_r,v_n)
\end{aligned}
\end{equation*}
with the upper bound
\begin{equation*}
    \|R^{\ast}(\widetilde{\vartheta}_r,v_n)\|_{L^2}\lesssim \|D^{\frac{\alpha}{2}}(\widetilde{\vartheta}_r)\|_{L^{\infty}}\|v_n\|_{L^2}\lesssim r^{-\frac{\alpha}{2}}\|v_n\|_{L^2}.
\end{equation*}
Once again, Lemma \ref{lemmaUnibound} provides a uniform constant for the above inequality independent of $n$.

By noticing that $\supp(\varphi_r^{m+1}-\varphi_r^2)\subset\{x: \, r\leq |x|\leq 2r\}$, the above arguments also establish the same control for the fifth term on the right-hand side of \eqref{eq:estab4.1}. More precisely, we obtain 
\begin{equation}
\Big|\int (\vartheta_r^{2}-\vartheta_r^{m+1})v_n^{m+1}  \Big|+\Big|\int (\varphi_r^{2}-\varphi_r^{m+1})v_n^{m+1}  \Big| \lesssim \delta^{\frac{m+1}{2}-\frac{d(m-1)}{2\alpha}}.
\end{equation}
Going back to \eqref{eq:estab4.1}, collecting the previous estimates, and taking $r$ large enough, we complete the proof of part (c). 
\end{proof}

\begin{corollary}\label{contrsubaddprop} 
Let $0<\alpha < 2$ and $1<m<\frac{2\alpha}{d}+1$. If $0<\lambda<q$, then $\mathcal{I}_q\geq \mathcal{I}_{\lambda}+\mathcal{I}_{q-\lambda}$.
\end{corollary}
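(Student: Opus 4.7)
The plan is to feed the output of Lemma \ref{dichotomyLemma} into the variational characterization of $\mathcal{I}_\lambda$ and $\mathcal{I}_{q-\lambda}$, after a scaling correction that forces the masses to match exactly.

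First, I fix a minimizing sequence $\{v_n\}$ for $\mathcal{I}_q$ (so $E[v_n]\to \mathcal{I}_q$ and $M[v_n]=q$), and fix a small $\delta>0$. Applying Lemma \ref{dichotomyLemma}, I obtain an integer $N$ and sequences $\{g_n\}_{n\geq N}$, $\{h_n\}_{n\geq N}$ in $H^{\frac{\alpha}{2}}(\mathbb{R}^d)$ satisfying the three bullet points (a)--(c) of that lemma. Since the truncations $g_n=\vartheta_r(\cdot-z_n)v_n$ and $h_n=\varphi_r(\cdot-z_n)v_n$ inherit a uniform $H^{\frac{\alpha}{2}}$-bound from Lemma \ref{lemmaUnibound}, their energies $E[g_n]$ and $E[h_n]$ are uniformly bounded as well.

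Next, I normalize the masses. Because $g_n$ typically does not have mass exactly $\lambda$, I rescale using the scaling from the proof of Lemma \ref{subaddprop}: set $\theta_n=\lambda/M[g_n]$ and $\widetilde{g}_n(x)=\theta_n^{\frac{\alpha}{2\alpha-d(m-1)}}\, g_n\bigl(\theta_n^{\frac{m-1}{2\alpha-d(m-1)}}x\bigr)$, which yields $M[\widetilde{g}_n]=\lambda$ and
\begin{equation*}
E[\widetilde{g}_n]=\theta_n^{\kappa}E[g_n], \qquad \kappa=\tfrac{\alpha(m+1)-d(m-1)}{2\alpha-d(m-1)}.
\end{equation*}
By property (a), $|\theta_n-1|\leq \delta/(\lambda-\delta)$, so $\theta_n\to 1$ uniformly in $n$ as $\delta\to 0$. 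The analogous rescaling $\widetilde{h}_n$, with $\widetilde{\theta}_n=(q-\lambda)/M[h_n]$, satisfies $M[\widetilde{h}_n]=q-\lambda$ and $\widetilde{\theta}_n\to 1$. The definition of $\mathcal{I}_\lambda$ and $\mathcal{I}_{q-\lambda}$ then yields $E[\widetilde{g}_n]\geq \mathcal{I}_\lambda$ and $E[\widetilde{h}_n]\geq \mathcal{I}_{q-\lambda}$, equivalently,
\begin{equation*}
E[g_n]\geq \theta_n^{-\kappa}\,\mathcal{I}_\lambda, \qquad E[h_n]\geq \widetilde{\theta}_n^{-\kappa}\,\mathcal{I}_{q-\lambda}.
\end{equation*}

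Combining these with property (c) gives
\begin{equation*}
E[v_n]\geq \theta_n^{-\kappa}\mathcal{I}_\lambda+\widetilde{\theta}_n^{-\kappa}\mathcal{I}_{q-\lambda}-\delta.
\end{equation*}
Sending $n\to\infty$ (so that $E[v_n]\to\mathcal{I}_q$) and using that $\theta_n^{-\kappa},\widetilde{\theta}_n^{-\kappa}\to 1$ uniformly with an error $O(\delta)$ (while $\mathcal{I}_\lambda,\mathcal{I}_{q-\lambda}$ are fixed finite numbers by Lemma \ref{Lemmanegat}), I obtain $\mathcal{I}_q\geq \mathcal{I}_\lambda+\mathcal{I}_{q-\lambda}-C\delta$ for a constant $C$ independent of $\delta$. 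Letting $\delta\to 0$ yields the claimed inequality. The only subtle point to watch is the sign of $\mathcal{I}_\lambda$ (which is negative), so that the correction $(\theta_n^{-\kappa}-1)\mathcal{I}_\lambda$ must be absorbed in $O(\delta)$ from the correct side; this is straightforward once $\theta_n,\widetilde{\theta}_n$ are uniformly within $\delta/(\lambda-\delta)$ of $1$.
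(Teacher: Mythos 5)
Your proof is correct and follows the same overall strategy as the paper: apply Lemma \ref{dichotomyLemma} to a minimizing sequence, normalize the masses of $g_n$ and $h_n$, invoke the variational definitions of $\mathcal{I}_\lambda$ and $\mathcal{I}_{q-\lambda}$, combine with property (c), and let $n\to\infty$, $\delta\to 0$. The only difference is the normalization device: the paper multiplies $g_n$ by a scalar $\beta_n$ with $\beta_n^2=\lambda/M[g_n]$ and estimates $|E[\beta_n g_n]-E[g_n]|\lesssim\delta$ via the uniform $H^{\frac{\alpha}{2}}$-bound, while you use the mass--energy scaling of Lemma \ref{subaddprop}, getting the exact identity $E[\widetilde{g}_n]=\theta_n^{\kappa}E[g_n]$; both yield the same $O(\delta)$ error and neither is materially simpler.
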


\begin{proof}
We first notice that if $|M[g]-\lambda|<\delta$, then setting $\beta^2=\frac{\lambda}{M[g]}$, we have $M[\beta g]=\lambda$, and $|\beta-1|<C_1 \delta$, where $C_1$ is independent of $g$ and $\delta>0$. Thus, we get
\begin{equation}
\mathcal{I}_{\lambda} \leq E[\beta g]=E[g]+C_2 \delta,
\end{equation}
where $C_2$ depends only on $\|g\|_{H^{\frac{\alpha}{2}}}$ and $C_1$. A similar reasoning works for a function $h\in H^{\frac{\alpha}{2}}(\mathbb{R}^d)$ with $|M[h]-(q-\lambda)|<\delta$. Thus, the previous argument and Lemma \ref{dichotomyLemma} imply that there exists a subsequence $\{v_{n_k}\}$ of $\{v_{n}\}$ with corresponding subsequences $\{h_{n_k}\}$ of $\{h_{n}\}$ and $\{g_{n_k}\}$ of $\{g_{n}\}$ such that
\begin{equation}
\begin{aligned}
&E[g_{n_k}]\geq \mathcal{I}_{\lambda}-\frac{1}{k}, \\
&E[h_{n_k}]\geq \mathcal{I}_{q-\lambda}-\frac{1}{k}, \\
&E[v_{n_k}] \geq  E[g_{n_k}]+E[h_{n_k}]-\frac{1}{k}.
\end{aligned}
\end{equation}
Hence,
\begin{equation}
E[v_{n_k}] \geq  \mathcal{I}_{\lambda}+\mathcal{I}_{q-\lambda}-\frac{3}{k}.
\end{equation}
Taking the limit $n_k \to \infty$ in the inequality above yields the desired conclusion. 
\end{proof}

Observe that Corollary \ref{contrsubaddprop} contradicts the subadditivity property established in Lemma \ref{subaddprop}. This rules out the case of dichotomy. 
We now  show that the vanishing case  does not occur either.

\begin{lemma}\label{rulevanish}   
Let $0<\alpha < 2$ and $1<m<\frac{2\alpha}{d}+1$. For every minimizing sequence, $\lambda>0$.
\end{lemma}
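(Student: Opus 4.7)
I would argue by contradiction: suppose $\lambda=0$ for the subsequence along which $\rho_n\to\rho$ locally uniformly. Since $\rho$ is nondecreasing with $\lim_{r\to\infty}\rho(r)=0$, this forces $\rho(r)=0$ for every $r>0$, so for each fixed $r>0$,
\begin{equation*}
\sup_{z\in \mathbb{R}^d}\int_{B(z,r)}|v_n|^2\,dx\longrightarrow 0 \quad \text{as } n\to\infty.
\end{equation*}
On the other hand, Lemma \ref{lemmaUnibound} supplies uniform bounds $\|v_n\|_{H^{\frac{\alpha}{2}}}\leq C$ and $\|v_n\|_{L^{m+1}}\geq \delta>0$ for all large $n$, so the non-vanishing criterion of Lemma \ref{lemmavanish} produces $\eta=\eta(C,\delta)>0$ and a sequence $\{z_n\}\subset\mathbb{R}^d$ with
\begin{equation*}
\int_{B(z_n,\,2\sqrt{d})}|v_n|^{m+1}\,dx\;\geq\;\eta\quad\text{for all large } n.
\end{equation*}

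The plan is to upgrade this local $L^{m+1}$ lower bound into a local $L^2$ lower bound via a cut-off Gagliardo--Nirenberg argument, in direct contradiction with the vanishing of $\rho_n(r)$ above. Fix $\chi\in C^\infty_c(\mathbb{R}^d)$ with $\chi\equiv 1$ on $B(0,2\sqrt{d})$ and $\supp\chi\subset B(0,3\sqrt{d})$, and set $\chi_n(x)\defeq \chi(x-z_n)$. Translation invariance gives $\|D^{\frac{\alpha}{2}}\chi_n\|_{L^\infty}=\|D^{\frac{\alpha}{2}}\chi\|_{L^\infty}$, a constant independent of $n$. Since $0<\frac{\alpha}{2}<1$, Proposition \ref{fractionalDeriv} with $s=\frac{\alpha}{2}$ reduces to the commutator estimate $\|[D^{\frac{\alpha}{2}},\chi_n]v_n\|_{L^2}\lesssim \|D^{\frac{\alpha}{2}}\chi\|_{L^\infty}\|v_n\|_{L^2}$, and combined with $\|\chi_n D^{\frac{\alpha}{2}} v_n\|_{L^2}\leq \|\chi\|_{L^\infty}\|D^{\frac{\alpha}{2}} v_n\|_{L^2}$ this yields $\|\chi_n v_n\|_{H^{\frac{\alpha}{2}}}\lesssim \|v_n\|_{H^{\frac{\alpha}{2}}}\lesssim 1$ uniformly in $n$.

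Applying the Gagliardo--Nirenberg inequality \eqref{GNineq} to $\chi_n v_n$, I would obtain
\begin{equation*}
\eta\;\leq\;\int_{B(z_n,2\sqrt{d})}|v_n|^{m+1}\,dx\;\leq\;\|\chi_n v_n\|_{L^{m+1}}^{m+1}\;\lesssim\;\|\chi_n v_n\|_{L^2}^{(m+1)-\frac{d(m-1)}{\alpha}},
\end{equation*}
where the implicit constant is uniform in $n$ thanks to the bound in the previous paragraph. The subcriticality $1<m<\frac{2\alpha}{d}+1$ forces the exponent $(m+1)-\frac{d(m-1)}{\alpha}$ to be strictly positive, and since $\|\chi_n v_n\|_{L^2}\leq \|v_n\|_{L^2(B(z_n,3\sqrt{d}))}$, this produces a uniform lower bound $\|v_n\|_{L^2(B(z_n,3\sqrt{d}))}\geq c_0>0$ for all large $n$. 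Hence $\rho_n(3\sqrt{d})\geq c_0^2/2$, contradicting the displayed vanishing $\rho_n(3\sqrt{d})\to 0$, and therefore $\lambda>0$.

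The main technical obstacle beyond the classical Lions concentration-compactness template is the uniform control of $\|D^{\frac{\alpha}{2}}(\chi_n v_n)\|_{L^2}$ for the nonlocal operator $D^{\frac{\alpha}{2}}$. Proposition \ref{fractionalDeriv} dispatches this via a standard commutator estimate, and translation invariance of $D^{\frac{\alpha}{2}}\chi_n$ ensures that all implicit constants are independent of the moving centers $z_n$; the remainder is a routine contradiction argument.
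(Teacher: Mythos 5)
Your proof is correct and follows essentially the same route as the paper: combine Lemmas \ref{lemmaUnibound} and \ref{lemmavanish} to get a uniform local $L^{m+1}$ lower bound, then apply the Gagliardo--Nirenberg inequality to a translated cut-off $\phi(\cdot-z_n)v_n$, controlling $\|D^{\alpha/2}(\phi(\cdot-z_n)v_n)\|_{L^2}$ uniformly via the commutator estimate from Proposition \ref{fractionalDeriv}, to deduce a uniform local $L^2$ lower bound that forces $\lambda>0$. The only cosmetic difference is that you phrase it as a contradiction argument while the paper directly bounds $\rho_n(3\sqrt{d})$ from below.
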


\begin{proof}
By Lemma \ref{lemmaUnibound} and \ref{lemmavanish} there exist $\eta>0$ and a sequence $\{z_n\} $ such that
\begin{equation}
\begin{aligned}
\int_{B(z_n,2 \sqrt{d}) }|v_n|^{m+1}  \geq \eta, \, \, \text{ for all } \, \, n.
\end{aligned}
\end{equation}
Let $\phi\in C^{\infty}_c(\mathbb{R}^d)$ be such that $0\leq \phi \leq 1$, $\phi(x)=1$ when $|x|\leq 2\sqrt{d}$,  
and $\supp \phi \subseteq \{ |x|\leq 3 \sqrt{d} \}$. 
Then by Lemma \ref{lemmaUnibound} and Gagliardo-Nirenberg inequality (Proposition \ref{GNineq}), we get
\begin{equation}
\begin{aligned}
\eta \leq \int_{B(z_n,2\sqrt{d})} |v_n|^{m+1}  &\lesssim \|\phi(\cdot-z_n) v_n\|_{L^2}^{(m+1)-\frac{d(m-1)}{\alpha}}\|D^{\frac{\alpha}{2}}(\phi(\cdot-z_n)v_n)\|_{L^{2}}^{\frac{d(m-1)}{\alpha}} \\
&\leq C \|v_n\|_{L^2(B(z_n,3\sqrt{d} \,))}^{(m+1)-\frac{d(m-1)}{\alpha}},
\end{aligned}
\end{equation}
where the term $\|D^{\frac{\alpha}{2}}(\phi(\cdot-z_n)v_n)\|_{L^{2}}$ is estimated the same as in \eqref{eq:estab6.0}. Therefore, we obtain
\begin{equation}
\lambda= \lim_{r\to \infty} \rho(r) \geq \rho(3 \sqrt{d}\, )=\lim_{n \to \infty} \rho_n(3 \sqrt{d} \,)\geq \frac{1}{2}\Big(\frac{\eta}{C}\Big)^{\frac{2\alpha}{(m+1)\alpha-d(m-1)}}>0.
\end{equation}
\end{proof}
We summarize that the cases of vanishing and dichotomy do not occur and draw the following conclusion.

\begin{theorem}\label{stabilTheorem-1}
Let $0 <\alpha<2$ and $1<m<\frac{2\alpha}{d}+1$.  
For every $q>0$ there exists a nonempty set $\mathcal{G}_q$ of minimizers of \eqref{min:prob}. 
Moreover, if $\{v_n\}$ is a minimizing sequence for $\mathcal{I}_q$, then 
\begin{itemize}
    \item[(i)] there exist a sequence $\{z_n\}$ and an element $g\in \mathcal{G}_q$ such that $\{v_n(\cdot+z_n)\}$ has a subsequence converging strongly in $H^{\frac{\alpha}{2}}(\mathbb{R}^d)$ to $g$,
\item[(ii)] 
$$
\lim_{n\to \infty}\inf_{g\in \mathcal{G}_q, z\in \mathbb{R}^d}\|v_n(\cdot+z)-g\|_{H^{\frac{\alpha}{2}}}=0,
$$
\item[(iii)] 
$$
\lim_{n\to \infty}\inf_{g\in \mathcal{G}_q}\|v_n-g\|_{H^{\frac{\alpha}{2}}}=0.
$$
\end{itemize}

\end{theorem}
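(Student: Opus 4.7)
The plan is to assemble the concentration--compactness pieces already developed (Lemmas \ref{Lemmanegat}--\ref{rulevanish} and Corollary \ref{contrsubaddprop}) into a single argument. Let $\{v_n\}$ be an arbitrary minimizing sequence for $\mathcal I_q$. By Lemma \ref{lemmaUnibound}(a), $\{v_n\}$ is uniformly bounded in $H^{\frac{\alpha}{2}}(\mathbb R^d)$, so the concentration functions $\rho_n$ defined just before Lemma \ref{lemmacompactres} are nondecreasing and take values in $[0,q]$. Helly's selection principle then provides a subsequence (still denoted $\{v_n\}$) such that $\rho_n$ converges uniformly on compact subsets of $[0,\infty)$ to a nondecreasing $\rho\colon[0,\infty)\to[0,q]$, and we set $\lambda=\lim_{r\to\infty}\rho(r)\in[0,q]$.

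The second step is to rule out vanishing and dichotomy. Vanishing ($\lambda=0$) is excluded by Lemma \ref{rulevanish}, whose proof combines the uniform lower bound $\|v_n\|_{L^{m+1}}\geq\delta$ from Lemma \ref{lemmaUnibound}(b) with the localization estimate in Lemma \ref{lemmavanish} to force $\rho(3\sqrt{d})>0$. Dichotomy ($0<\lambda<q$) is excluded because Corollary \ref{contrsubaddprop} would give $\mathcal I_q\geq \mathcal I_\lambda+\mathcal I_{q-\lambda}$, directly contradicting the strict subadditivity of Lemma \ref{subaddprop}. Hence the compactness case $\lambda=q$ must occur, and Lemma \ref{lemmacompactres} then produces a sequence of translations $\{z_n\}\subset\mathbb R^d$ and some $g\in\mathcal G_q$ such that, along a further subsequence, $v_n(\cdot+z_n)\to g$ strongly in $H^{\frac{\alpha}{2}}(\mathbb R^d)$. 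In particular $\mathcal G_q\neq\emptyset$ and (i) holds.

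Conclusion (ii) requires upgrading the subsequential convergence in (i) to a statement for the whole sequence. This follows from a standard contradiction argument: if $\inf_{g\in\mathcal G_q,\,z\in\mathbb R^d}\|v_n(\cdot+z)-g\|_{H^{\frac{\alpha}{2}}}$ did not vanish along the full sequence, some subsequence would remain bounded below in that infimum; but being itself a minimizing sequence, the argument above could be reapplied, producing a further subsequence that converges (after translation) in $H^{\frac{\alpha}{2}}$ to an element of $\mathcal G_q$, a contradiction. Finally, (iii) is an immediate consequence of the translation invariance of $E$ and $M$: if $g\in\mathcal G_q$, then $g(\cdot-z)\in\mathcal G_q$ for every $z\in\mathbb R^d$, hence $\inf_{g\in\mathcal G_q}\|v_n-g\|_{H^{\frac{\alpha}{2}}}=\inf_{g\in\mathcal G_q,\,z\in\mathbb R^d}\|v_n(\cdot+z)-g\|_{H^{\frac{\alpha}{2}}}$, which coincides with the quantity in (ii).

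The one step that genuinely required care already appeared earlier in the setup: the cutoff estimate in Lemma \ref{dichotomyLemma} used to derive the strict subadditivity had to accommodate the nonlocal operator $D^{\alpha}$, which is handled there via the commutator decomposition from Proposition \ref{fractionalDeriv}. Once Lemmas \ref{subaddprop}, \ref{lemmacompactres}, \ref{lemmavanish}, \ref{dichotomyLemma} and \ref{rulevanish} are in hand, the proof of Theorem \ref{stabilTheorem-1} reduces to the bookkeeping described above.
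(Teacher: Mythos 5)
Your argument is correct and follows essentially the same route as the paper's proof: rule out vanishing via Lemma \ref{rulevanish} and dichotomy via the clash between Corollary \ref{contrsubaddprop} and Lemma \ref{subaddprop}, deduce compactness and invoke Lemma \ref{lemmacompactres} for (i), prove (ii) by the subsequence contradiction, and obtain (iii) from translation invariance of $E$ and $M$. The only cosmetic difference is that you name Helly's selection principle explicitly and present (ii) before (iii), whereas the paper states the selection of $\rho_n$ before the lemmas and derives (iii) directly from (ii).
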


\begin{proof}
Gathering Lemmas \ref{subaddprop}, \ref{dichotomyLemma}, \ref{rulevanish}, and Corollary \ref{contrsubaddprop}, it must be the case $\lambda=q$. Thus, Lemma \ref{lemmacompactres} establishes part (i). The part (iii) follows from (ii) and the fact that the functionals $E[\cdot]$ and $M[\cdot]$ are invariant under translation, more precisely, if $g \in \mathcal{G}_q$, then  any translation of $g$ is also in $\mathcal{G}_q$. 

The part (ii) follows by contradiction generalizing the proof of \cite[Theorem 2.9]{Albert1999} to all dimensions.
If (ii) does not hold, then there exists a subsequence $\{v_{n_k}\}$ of $\{v_n\}$ and a number $\delta>0$ such that
\begin{equation}\label{contradieq1}
    \inf_{g\in \mathcal{G}_q, z\in \mathbb{R}^d}\|v_{n_k}(\cdot+z)-g\|_{H^{\frac{\alpha}{2}}}\geq \delta
\end{equation}
for all $k\geq 1$ integer. However, $\{v_{n_k}\}$ is a minimizing sequence of $\mathcal{I}_q$, and thus, from part (i), there exists a sequence $\{z_k\}$ and $g_0\in \mathcal{G}_q$ such that
\begin{equation*}
    \liminf_{k\to \infty}\|v_{n_k}(\cdot+z_k)-g_0\|_{H^{\frac{\alpha}{2}}}=0.
\end{equation*}
This is a contradiction to \eqref{contradieq1}, which completes the proof of (ii).
\end{proof}

Before we proceed to the discussion of the stability of the set $\mathcal{G}_q$, we recall some well-posedness results as it was mentioned that in some cases the desired well-posedness (in $H^{\frac{\alpha}2}$) is not yet established. 

Recall that when $m>1$ is an integer, by a standard parabolic regularization argument, the Cauchy problem associated to \eqref{EQ:fKdV} is locally well-posed in $H^{s}(\mathbb{R}^d)$ with $s> \frac{d}{2}+1$. Briefly, this argument consists of adding an extra term $\mu \Delta u$ to the right-hand side of \eqref{EQ:fKdV}, after which the limit $\mu \to 0$ is taken in an appropriate space. 

\begin{lemma}\label{L:Regular}
Let $s>\frac{d}{2}+1$, $0<\alpha<2$ and $m>1$ be an integer. Then for any $u_0 \in H^s(\mathbb{R}^d)$ there exists a maximal time of existence $T^{\ast}=T^{\ast}(\|u_0\|_{H^s})>0$ and a unique solution $u\in C([0,T^{\ast}); H^s(\mathbb{R}^d))$ of the initial value problem associated to \eqref{EQ:fKdV}. 
\end{lemma}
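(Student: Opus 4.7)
My plan is to follow the classical parabolic regularization scheme that the authors outline. For each $\mu>0$, I consider the regularized Cauchy problem
\begin{equation*}
\partial_t u_\mu -\partial_{x_1} D^\alpha u_\mu + u_\mu^{m-1}\partial_{x_1} u_\mu = \mu\Delta u_\mu, \qquad u_\mu(0)=u_0.
\end{equation*}
Since $\mu\Delta$ generates an analytic semigroup on $H^s(\mathbb{R}^d)$ while the remaining right-hand side $\partial_{x_1} D^{\alpha} u - u^{m-1}\partial_{x_1} u$ is locally Lipschitz on $H^s$ when $s>\frac{d}{2}+1$ (so that Sobolev embedding gives $H^{s-1}\hookrightarrow L^\infty$ for both $u$ and its first derivative), a Duhamel fixed-point argument in $C([0,T_\mu];H^s(\mathbb{R}^d))$ produces a unique local solution $u_\mu$ for each fixed $\mu>0$.

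The heart of the proof is to obtain $H^s$-energy estimates that are \emph{uniform in $\mu$}. Applying $J^s$ to the equation, pairing with $J^s u_\mu$ in $L^2$, and using the skew-adjointness of $\partial_{x_1} D^\alpha$, one arrives at
\begin{equation*}
\tfrac{1}{2}\tfrac{d}{dt}\|u_\mu\|_{H^s}^2 + \mu\|\nabla u_\mu\|_{H^s}^2 = -\int J^{s}\bigl(u_\mu^{m-1}\partial_{x_1} u_\mu\bigr)\,J^{s}u_\mu\,dx.
\end{equation*}
Splitting the right-hand side as $[J^s,u_\mu^{m-1}]\partial_{x_1} u_\mu + u_\mu^{m-1}J^s\partial_{x_1} u_\mu$, I handle the first summand by the Kato–Ponce commutator estimate and the second by integration by parts in $x_1$, which shifts the derivative onto $u_\mu^{m-1}$ and yields a factor $\|\partial_{x_1}(u_\mu^{m-1})\|_{L^\infty}\lesssim\|u_\mu\|_{H^s}^{m-1}$ (here is where $s>\frac{d}{2}+1$ is used). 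Altogether this gives a closed inequality $\tfrac{d}{dt}\|u_\mu\|_{H^s}^2\lesssim \|u_\mu\|_{H^s}^{m+1}$, independent of $\mu$, from which a uniform existence time $T^{\ast}=T^{\ast}(\|u_0\|_{H^s})>0$ and a uniform bound $\sup_{\mu\in(0,1],\,t\in[0,T^{\ast}]}\|u_\mu(t)\|_{H^s}\le C(\|u_0\|_{H^s})$ follow by Gronwall.

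Next I would pass to the limit $\mu\to 0^+$. Reading $\partial_t u_\mu$ from the equation, the uniform $H^s$-bound yields a uniform bound on $\partial_t u_\mu$ in $H^{s-\alpha-1}\cap H^{s-2}$ (the $\mu\Delta u_\mu$ contribution is $O(\mu)$ in $H^{s-2}$). An Aubin–Lions argument extracts a subsequence converging in $C([0,T^{\ast}];H^{s'}_{\mathrm{loc}})$ for any $s'<s$, and this limit $u$ solves \eqref{EQ:fKdV} in the distributional sense and lies in $L^\infty([0,T^{\ast}];H^s)$ by weak lower semicontinuity. Strong continuity in time at the $H^s$-level is then upgraded by a Bona–Smith-type argument: mollifying the initial data as $u_{0,\varepsilon}=\rho_\varepsilon\ast u_0$, running the above procedure, and using $L^2$-difference estimates between $u_{\mu,\varepsilon}$ and $u_{\mu,\varepsilon'}$ (which close at low regularity because $s>\frac{d}{2}+1$ makes $\|\nabla u\|_{L^\infty}$ available) gives a Cauchy property in $C([0,T^{\ast}];H^s)$. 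Uniqueness, continuous dependence, and the blow-up alternative defining the maximal time $T^{\ast}$ then follow from the same $L^2$-energy estimate applied to the difference of two solutions and a Gronwall argument.

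The main technical obstacle is the highest-order term in the nonlinearity $\partial_{x_1}(u^m)$: the $H^s$ energy estimate can only absorb it by integrating by parts once, at the cost of one $L^\infty$ derivative on $u^{m-1}$. This is precisely why the threshold $s>\frac{d}{2}+1$ is sharp for this parabolic regularization scheme, and it is also the only place where the integer condition on $m$ enters (so that $u^{m-1}$ is smooth in $u$ and the fractional Leibniz rule applies with no regularity defect).
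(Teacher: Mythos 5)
Your proposal is exactly the parabolic-regularization scheme the paper invokes and refers to the cited references for; the paper gives no proof beyond a one-sentence sketch, so your fleshed-out version is the intended argument and the key steps (the $J^s$ energy estimate closed via Kato--Ponce plus one integration by parts to reach $\frac{d}{dt}\|u_\mu\|_{H^s}^2\lesssim\|u_\mu\|_{H^s}^{m+1}$ uniformly in $\mu$, Aubin--Lions to pass $\mu\to 0$, Bona--Smith to upgrade to $C([0,T^{\ast});H^s)$, and the $L^2$-difference Gronwall for uniqueness) are all correct.

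One step would fail as literally written. You treat $\partial_{x_1}D^{\alpha}u_\mu$ as part of the locally Lipschitz perturbation of the semigroup $e^{\mu t\Delta}$ in the Duhamel fixed point. But $\partial_{x_1}D^{\alpha}$ has order $1+\alpha$, and the heat semigroup only smooths at rate $\|e^{\mu t\Delta}\|_{H^{s-1-\alpha}\to H^s}\lesssim (\mu t)^{-(1+\alpha)/2}$; this power is time-integrable near $t=0$ only for $\alpha<1$, so for $1\le\alpha<2$ the iteration does not close in $C([0,T_\mu];H^s)$. The standard remedy is to absorb the skew-adjoint dispersive operator into the generator, i.e.\ work with $A_\mu=\mu\Delta+\partial_{x_1}D^{\alpha}$, which (having symbol with real part $-\mu|\xi|^2$) still generates a $C_0$ semigroup on $H^s$ with the same parabolic smoothing, and treat only the nonlinearity $u_\mu^{m-1}\partial_{x_1}u_\mu:H^s\to H^{s-1}$ as the locally Lipschitz perturbation. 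With that adjustment, the local theory for each fixed $\mu$ holds for the whole range $0<\alpha<2$, and the remainder of your argument goes through unchanged.
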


\begin{proof}
The proof closely follows the arguments for the one-dimensional models in \cite{ABFS1989, Iorio1986}. For the reader's convenience we provide a short overview of the proof. For a fixed initial condition $u_0\in H^s(\mathbb{R}^d)$, the first step is to find solutions to the regularized equation, obtained by adding the term $\mu \Delta u$ to the right-hand side of \eqref{EQ:fKdV}.  Such solutions exist for all $\mu>0$, by a contraction argument, where this extra factor $\mu \Delta u$ is essential to dominate the dispersive effects and to be able to control the extra derivative in the nonlinear term $\partial_{x_1}(u^m)$. 

Next, note that using the Kato-Ponce commutator estimate \cite{KatoPonce1988}, and the fractional Leibniz rule for the operator $J^s$ (e.g., see \cite[Theorem 1]{GrafakosOh2014}), from the energy estimates for \eqref{EQ:fKdV} one obtains
\begin{equation}
\frac{d}{dt}\|u(t)\|_{H^s}^2 \leq c \|u(t)\|^{m-2}_{L^{\infty}}\|\nabla u(t)\|_{L^{\infty}}\|u(t)\|_{H^s}^2,  
\end{equation}
from which Gronwall's inequality yields
\begin{equation}\label{E:G}
 \sup_{t\in [0,T]} \|u(t)\|_{H^s}^2 \leq \|u_0\|_{H^s}^2 \, e^{c \int_0^T \|u(\tau)\|^{m-2}_{L^{\infty}}\|\nabla u(\tau)\|_{L^{\infty}} \, d\tau}, 
\end{equation}
for some  constant $c>0$ and all $T>0$ (for similar ideas in the case $m=2$, $\alpha=1$, see \cite{HLORW2019}). Using the Sobolev embedding $H^{s}(\mathbb{R}^d) \hookrightarrow L^{\infty}(\mathbb{R}^d)$, $s>\frac{d}{2}+1$, the argument in the above exponential in \eqref{E:G} is controlled via the $H^s$-norm. Next, note that similar estimates are valid with a constant independent of $\mu$ for the solutions of the regularized equation \eqref{EQ:fKdV}. Hence, one can show that the existence time of solutions of the regularized equation is independent of $0<\mu<1$, and such solutions are uniformly bounded in $H^s(\mathbb{R}^d)$. Thus, using these previous estimates, the convergence as $\mu\to 0^+$ follows (for further details, see \cite{ABFS1989, Iorio1986}).
\end{proof}

We are now ready to describe the stability of the set of minimizers $\mathcal{G}_q$.

\begin{theorem}\label{stabilTheorem-2}
Let $0 <\alpha<2$ and $1<m<\frac{2\alpha}{d}+1$ is an integer. 
The set $\mathcal{G}_q$ is stable, that is, for any $\omega>0$ there exists $\delta>0$ such that if $u_0\in H^{s}(\mathbb{R}^d)$, $s>\frac{d}{2}+1$, and
\begin{equation*}
\inf_{g\in \mathcal{G}_q}\|u_0-g\|_{H^{\frac{\alpha}{2}}}<\delta,
\end{equation*}
then the corresponding solution $u \in C([0,T^{\ast});H^{s}(\mathbb{R}^d))$ of \eqref{EQ:fKdV} with the initial condition $u_0$ satisfies
\begin{equation}
\inf_{g\in \mathcal{G}_q}\|u(\cdot,t)-g\|_{H^{\frac{\alpha}{2}}}<\omega,
\end{equation}
for all $t>0$
(in cases, where the global well-posedness is not yet established in a corresponding $H^s$, then it should read $0<t<T^{\ast}$, where $0<T^{\ast} \leq \infty$ is the maximal existence time of solution $u(t)$).
\end{theorem}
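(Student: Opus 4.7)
The plan is to argue by contradiction and reduce the stability claim to the compactness conclusion of Theorem \ref{stabilTheorem-1}(iii) via the conservation laws. Suppose $\mathcal{G}_q$ is not stable. Then there exist $\omega_0 > 0$ and initial data $u_{0,n} \in H^s(\mathbb{R}^d)$ with $\inf_{g \in \mathcal{G}_q} \|u_{0,n} - g\|_{H^{\alpha/2}} \to 0$, whose solutions $u_n$ to \eqref{EQ:fKdV} satisfy $\inf_{g \in \mathcal{G}_q} \|u_n(\cdot, t) - g\|_{H^{\alpha/2}} \geq \omega_0$ for some $t \in (0, T_n^{\ast})$. Since the flow $t \mapsto u_n(\cdot,t)$ is continuous in $H^{s}$, hence in $H^{\alpha/2}$, and the initial discrepancy tends to zero, by continuity I can pick a first time $t_n \in (0, T_n^{\ast})$ at which
\begin{equation*}
\inf_{g \in \mathcal{G}_q} \|u_n(\cdot, t_n) - g\|_{H^{\alpha/2}} = \tfrac{\omega_0}{2}.
\end{equation*}

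Next I show that $\{u_n(\cdot, t_n)\}$ is, up to a vanishing normalization, a minimizing sequence for $\mathcal{I}_q$. Pick $g_n \in \mathcal{G}_q$ with $\|u_{0,n} - g_n\|_{H^{\alpha/2}} \to 0$. Since $M[g_n] = q$ and $E[g_n] = \mathcal{I}_q$, and the functionals $M[\cdot]$ and $E[\cdot]$ are continuous on $H^{\alpha/2}(\mathbb{R}^d)$ (the $L^{m+1}$-term in $E$ being controlled by the Gagliardo-Nirenberg inequality \eqref{GNineq}), the conservation of mass \eqref{E:mass} and energy \eqref{E:energy} yield
\begin{equation*}
M[u_n(t_n)] = M[u_{0,n}] \longrightarrow q, \qquad E[u_n(t_n)] = E[u_{0,n}] \longrightarrow \mathcal{I}_q.
\end{equation*}
To obtain mass exactly equal to $q$, I rescale $\tilde{u}_n \defeq \lambda_n u_n(\cdot, t_n)$ with $\lambda_n \defeq \sqrt{q / M[u_n(t_n)]} \to 1$. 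Then $M[\tilde{u}_n] = q$, and the continuity of $E$ together with boundedness of $\{u_n(t_n)\}$ in $H^{\alpha/2}$ give $E[\tilde{u}_n] \to \mathcal{I}_q$. Hence $\{\tilde{u}_n\}$ is a minimizing sequence for \eqref{min:prob}.

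Applying Theorem \ref{stabilTheorem-1}(iii) to $\{\tilde{u}_n\}$, I obtain $\inf_{g \in \mathcal{G}_q} \|\tilde{u}_n - g\|_{H^{\alpha/2}} \to 0$. The boundedness of $\{\tilde{u}_n\}$ in $H^{\alpha/2}$ (Lemma \ref{lemmaUnibound}) and $\lambda_n \to 1$ imply $\|u_n(t_n) - \tilde{u}_n\|_{H^{\alpha/2}} \to 0$, so by the triangle inequality
\begin{equation*}
\inf_{g \in \mathcal{G}_q} \|u_n(\cdot, t_n) - g\|_{H^{\alpha/2}} \longrightarrow 0,
\end{equation*}
contradicting the choice $\inf_{g \in \mathcal{G}_q} \|u_n(\cdot, t_n) - g\|_{H^{\alpha/2}} = \omega_0/2$.

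The one delicate point is the selection of the intermediate time $t_n$: this rests on the continuity of $t \mapsto \inf_{g \in \mathcal{G}_q} \|u_n(\cdot,t) - g\|_{H^{\alpha/2}}$, which follows from continuity of the flow in $H^s(\mathbb{R}^d)$ for $s > d/2 + 1$ and the continuous embedding $H^s \hookrightarrow H^{\alpha/2}$. The rest of the argument is a routine application of the conservation laws combined with the compactness of minimizing sequences modulo translations, which is precisely what Theorem \ref{stabilTheorem-1}(iii) provides. Note that translations drop out here because $\mathcal{G}_q$ is invariant under spatial translations, so the infimum over $g \in \mathcal{G}_q$ already absorbs the translation parameter; this is why part (iii), rather than part (ii), is the right statement to invoke. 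In cases where global well-posedness in $H^s$ is unavailable, the conclusion is interpreted on $[0, T^{\ast})$ as stated.
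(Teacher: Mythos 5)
Your proof is correct and follows essentially the same contradiction-via-minimizing-sequence strategy as the paper: use conservation of mass and energy to show the (normalized) snapshots $u_n(t_n)$ form a minimizing sequence for $\mathcal{I}_q$, then invoke the compactness conclusion of Theorem \ref{stabilTheorem-1}(iii) to contradict the assumed lower bound on the distance to $\mathcal{G}_q$. The intermediate-time selection $\inf_{g}\|u_n(t_n)-g\|_{H^{\alpha/2}}=\omega_0/2$ is an unnecessary extra step here (the paper works directly with the times at which the distance is $\geq\omega_0$, which suffices since the argument concludes that this distance tends to zero along the sequence), but it does no harm.
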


\begin{proof}
Assume to the contrary that the stability of $\mathcal{G}_q$ does not hold. Then, there exist $\omega>0$, a sequence $\{u_{0,n}\}$ in $H^{\frac{\alpha}{2}}(\mathbb{R}^d)$ and a sequence of times $\{t_n\}$ such that
\begin{equation}
\inf_{g\in \mathcal{G}_q}\|u_{0,n}-g\|_{H^{\frac{\alpha}{2}}}<\frac{1}{n},
\end{equation}
and 
\begin{equation}
\inf_{g\in \mathcal{G}_q}\|u_n(\cdot,t_n)-g\|_{H^{\frac{\alpha}{2}}}\geq \omega,
\end{equation}
for all $n$, where we denote by $u_{n}(x,t)$ the solution of \eqref{EQ:fKdV} with the initial condition $u_{0,n}$. Since $u_{0,n}$ is converging in $\mathcal{G}_q$ in $H^{\frac{\alpha}{2}}(\mathbb{R}^d)$, it follows that $E[u_{0,n}] \to \mathcal{I}_q$ and $M[u_{0,n}]\to q$. Then for each $n$ we choose $\theta_n $ such that $M[\theta_n u_{0,n}]=q$, hence, it must follow that $\theta_n\to 1$. Setting $f_n=\theta_n u_n(\cdot,t_n)$, we have $M[f_n]=q$ and
\begin{equation}
\lim_{n\to \infty} E[f_n]=\lim_{n\to \infty}E[u_n(\cdot,t_n)]=\lim_{n\to \infty}E[u_{0,n}]=\mathcal{I}_q.
\end{equation}
Then $\{f_n\}$ is a minimizing sequence for $\mathcal{I}_q$. Note that Theorem \ref{stabilTheorem-1} assures that for all $n$ sufficiently large there exists $g_n \in \mathcal{G}_q$ such that $\|f_n-g_n\|_{H^{\frac{\alpha}{2}}}<\frac{\omega}{2}$. However, this implies that 
\begin{equation}
\begin{aligned}
\omega\leq \|u_n(\cdot,t_n)-g_n\|_{H^{\frac{\alpha}{2}}}&\leq \|u_n(\cdot,t_n)-f_n\|_{H^{\frac{\alpha}{2}}}+\|f_n-g_n\|_{H^{\frac{\alpha}{2}}} \\
&\leq|1-\theta_n|\|u_n(\cdot, t_n)\|_{H^{\frac{\alpha}{2}}}+\frac{\omega}{2},
\end{aligned}
\end{equation}
which yields a contradiction when $n \to \infty$ and completes the proof of the theorem.
\end{proof}

Next, we relate the set $\mathcal{G}_q$ with the minimizers of the Gagliardo--Nirenberg inequality \eqref{GNineq}.

\begin{lemma}\label{caraGround} 
Let $q>0$, $0<\alpha<2$ and $1<m<\frac{2\alpha}{d}+1$. Any minimizer $f$ of $\mathcal{I}_q$ can be written as 
\begin{equation*}
f(x)=c^{\frac{1}{m-1}}Q(c^{\frac{1}{\alpha}}(x-z)),
\end{equation*}
for some $z\in \mathbb{R}^d$ and $Q$ a ground state solution of \eqref{NLEQ}
with the parameter $c>0$ chosen such that $\frac{1}{2}\int f^2=q$. 
\end{lemma}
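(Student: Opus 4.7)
The plan is to derive the Euler--Lagrange equation satisfied by $f$, check positivity of the Lagrange multiplier, reduce to a non-negative minimizer, and then invoke the uniqueness of the ground state from \cite{FLS2016}.

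First, since $f$ minimizes the smooth functional $E$ on the $C^{\infty}$-manifold $\{M[u]=q\}$ and $f\not\equiv 0$ (because $\mathcal{I}_q<0$ by Lemma \ref{Lemmanegat}), the Lagrange multiplier theorem in $H^{\frac{\alpha}{2}}(\mathbb R^{d})$ produces $c\in\mathbb R$ with $E'[f]+c\,M'[f]=0$, i.e., $cf+D^{\alpha}f-\tfrac{1}{m}f^{m}=0$, so $f$ solves \eqref{EQ:groundState}. A bootstrap on this elliptic equation (analogous to Lemma \ref{LemmaQprop}) gives enough smoothness and decay to apply the Pohozaev identity \eqref{Phideneq1}:
\[
\|D^{\frac{\alpha}{2}}f\|_{L^2}^{2} = \frac{dc(m-1)}{2d-(d-\alpha)(m+1)}\|f\|_{L^2}^{2}.
\]
Since $m\in(1,\frac{2\alpha}{d}+1)\subset(1,m_{\ast})$, the denominator is strictly positive by \eqref{defcritm} and $\|f\|_{L^2}>0$, so $c>0$.

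Next, I would argue that $f$ has constant sign. The pointwise inequality $\big||f|(x)-|f|(y)\big|\leq |f(x)-f(y)|$ combined with the singular integral formula for $D^{\alpha}$ yields the fractional Polya--Szego bound $\|D^{\frac{\alpha}{2}}|f|\|_{L^2}\leq\|D^{\frac{\alpha}{2}}f\|_{L^2}$, while $\int|f|^{m+1}\geq\int f^{m+1}$ is trivial (equality for odd $m$; strict for even $m$ unless $f\geq 0$). Thus $E[|f|]\leq E[f]$ with $M[|f|]=M[f]$, so $|f|$ is also a minimizer. When $m$ is even, strict inequality when $f$ changes sign contradicts minimality, so $f\geq 0$. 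When $m$ is odd, the $L^{m+1}$-term is insensitive to sign, so equality must hold in the Polya--Szego bound, which forces $f$ to have constant sign; replacing $f$ by $-f$ if necessary, we may again assume $f\geq 0$. Setting $\widetilde f(y):=c^{-\frac{1}{m-1}}f(c^{-\frac{1}{\alpha}}y)$ then gives a non-negative $H^{\frac{\alpha}{2}}$-solution of \eqref{NLEQ}. By Proposition \ref{existTHR} and the uniqueness theorem of Frank--Lenzmann--Silvestre in \cite{FLS2016}, there exists $y_0\in\mathbb R^{d}$ such that $\widetilde f(y)=Q(y-y_0)$, whence
\[
f(x)=c^{\frac{1}{m-1}}Q\bigl(c^{\frac{1}{\alpha}}(x-z)\bigr),\qquad z:=c^{-\frac{1}{\alpha}}y_0.
\]
Substituting into the mass constraint $\tfrac12\int f^{2}=q$ and changing variables yields $\tfrac12\,c^{\frac{2}{m-1}-\frac{d}{\alpha}}\|Q\|_{L^2}^{2}=q$; since the exponent $\frac{2}{m-1}-\frac{d}{\alpha}>0$ in the $L^2$-subcritical range, this uniquely determines the positive $c$ of the statement.

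The main obstacle is the sign step: carefully justifying that sign-changing minimizers cannot exist through the equality analysis of the fractional Polya--Szego inequality. An alternative bypass is to note that a simple two-parameter rescaling $u\mapsto\mu u(\lambda\,\cdot)$ reduces $\mathcal I_q$ to an equivalent minimization of the Weinstein functional $\mathcal W$ on $H^{\frac{\alpha}{2}}\setminus\{0\}$; then the full characterization of $\mathcal W$-minimizers as $\beta Q(\gamma(\cdot+z))$, quoted in Section \ref{sectionPrelimDecay} from \cite{FLS2016}, directly gives the form of $f$, with $\beta,\gamma$ pinned down by \eqref{EQ:groundState} and the mass constraint.
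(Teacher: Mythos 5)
Your proposal is correct and arrives at the same conclusion by a slightly different path than the paper. Both proofs begin identically: Lagrange multipliers give $\theta_q f + D^{\alpha}f - \tfrac1m f^m = 0$, the Pohozaev identity \eqref{Phideneq1} forces $\theta_q > 0$, and the fractional Polya--Szego inequality $\|D^{\alpha/2}|f|\|_{L^2}\leq\|D^{\alpha/2}f\|_{L^2}$ is used to reduce to a non-negative minimizer. From there the paper takes a detour you avoid: it computes $\mathcal{W}(f)$ from the Pohozaev relations \eqref{eq:estab7}--\eqref{eq:estab8}, pairs the inequality $\mathcal{W}(f)\geq\mathcal{W}(Q_{c^\ast})$ with the energy comparison $E[f]=\mathcal I_q\leq E[Q_{c^\ast}]$ (using $s_c<0$) to squeeze out $\theta_q=c^\ast$, and only then invokes uniqueness of global Weinstein minimizers from \cite{FLS2016}. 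You instead rescale $f$ directly to a non-negative $H^{\alpha/2}$ solution of \eqref{NLEQ} and appeal to Proposition \ref{existTHR} together with the uniqueness theorem of \cite{FLS2016}, then recover $c$ from the mass constraint; this is shorter since the Lagrange multiplier is fixed by the constraint anyway, so the Weinstein-quotient bookkeeping is dispensable. What the paper's route buys is a direct verification that $\mathcal I_q$-minimizers are global $\mathcal W$-minimizers, which matches exactly the form in which uniqueness is stated in Section \ref{sectionPrelimDecay}; your route assumes the slightly stronger phrasing that any non-negative $H^{\alpha/2}$ solution of \eqref{NLEQ} is unique up to translation, which is also what the paper's discussion of \cite{FLS2016} asserts, so this is fine. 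One place where your write-up is actually more careful than the paper is the sign step: the paper's one-sentence argument silently uses $\int |f|^{m+1}\geq\int f^{m+1}$ and does not discuss the equality case, whereas you separate the $m$ even and $m$ odd cases and point out that for $m$ odd one must analyze equality in Polya--Szego (or simply pass to $-f$). Your closing remark that the problem can be recast as minimizing the Weinstein functional via a two-parameter rescaling is precisely the mechanism underlying the paper's version of the argument.
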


\begin{proof}
By Theorem \ref{existTHR} and scaling, there exists a positive minimizer $Q_{c^{\ast}}$ 
of the functional \eqref{Wfunctional}, solving \eqref{EQ:groundState} such that $M[Q_{c^{\ast}}]=q$, and
\begin{equation}\label{eq:estab6.1}
E[Q_{c^{\ast}}] = s_c \, \frac{2c^{\ast}(m-1)}{(2d-(d-\alpha)(m+1))} \, q,
\end{equation}
which holds by Lemma \ref{PHident}. Furthermore, in terms of the minimizers of the Weinstein functional \eqref{Wfunctional}, we have
\begin{equation}
\begin{aligned}
\mathcal{W}(Q_{c^{\ast}})&=\frac{\| D^{\frac{\alpha}{2}}Q_{c^{\ast}}\|_{L^2}^{\frac{d(m-1)}{\alpha}}\|Q_{c^{\ast}}\|_{L^2}^{(m+1)-\frac{d(m-1)}{\alpha}}}{\|Q_{c^{\ast}}\|_{L^{m+1}}^{m+1}}\\
&=\frac{d^{\frac{d(m-1)}{2\alpha}}(m-1)^{\frac{d(m-1)}{2\alpha}}\Big(2d-(d-\alpha)(m+1)\Big)^{\frac{2\alpha-d(m-1)}{2\alpha}}}{2^{\frac{1-m}{2}}m\alpha(m+1)(c^{\ast})^{\frac{2\alpha-d(m-1)}{2\alpha}}} \, q^{\frac{m-1}{2}}.
\end{aligned}
\end{equation}
On the other hand, if $f$ is a minimizer of $\mathcal{I}_q$, by Lagrange multiplier theory, there exists a constant $\theta_q\in \mathbb{R}$ such that
\begin{equation}
D^{\alpha}f-\frac{1}{m}f^m+\theta_q f=0.
\end{equation}
Then, Lemma \ref{PHident} yields
\begin{equation}\label{eq:estab7}
\|D^{\frac{\alpha}{2}}f\|_{L^2}^2=\frac{2d\theta_q (m-1)}{(2d-(d-\alpha)(m+1))}\, q,
\end{equation}
\begin{equation}\label{eq:estab8}
\int f^{m+1}=\frac{2\alpha\theta_q m(m+1)}{(2d-(d-\alpha)(m+1))}\, q,
\end{equation}
and
\begin{equation}\label{eq:estab9}
E[f] = s_c \, \frac{2\theta_q (m-1)}{(2d-(d-\alpha)(m+1))} \, q.
\end{equation}
From \eqref{eq:estab7} it follows that $\theta_q>0$, and by \eqref{eq:estab8} 
\begin{equation}
\int f^{m+1} \geq 0.
\end{equation}
We claim that $f$ is positive. Indeed, since $\|D^{\frac{\alpha}{2}}(|f|)\|_{L^2}\leq \|D^{\frac{\alpha}{2}}f\|_{L^2}$ (see (2.10) in \cite{FelmerQuaTan2012}), then $E[|f|]\leq E[f]$ and $M[|f|]=q$, hence, it implies that $f$ is positive, otherwise, we would have $E[|f|]<E[f]=\mathcal{I}_q$, contrary to $f$ being a minimizer of $\mathcal{I}_q$.

Applying \eqref{eq:estab7} and \eqref{eq:estab8} to compute $\mathcal{W}(f)$, the inequality $\mathcal{W}(f)\geq \mathcal{W}(Q_{c^{\ast}})$ implies that $ c^{\ast}\geq \theta_q$, however, from \eqref{eq:estab6.1} and \eqref{eq:estab9} we would have $ \theta_q\geq c^{\ast}$. This gives $\mathcal{W}(f)= \mathcal{W}(Q_{c^{\ast}})$. Then Lemma \ref{existTHR} and the uniqueness result in \cite{FLS2016, FL2013} as discussed in Section \ref{sectionPrelimDecay} show
that $f=Q_{c^{\ast}}(\cdot-z)$ for some $z\in \mathbb{R}^d$. Lastly, observe that $Q_{c^{\ast}}(x)=(c^{\ast})^{\frac{1}{m-1}}Q((c^{\ast})^{\frac{1}{\alpha}}x)$, where $Q$ is a ground state solution of \eqref{NLEQ}, thus, completing the proof.
\end{proof}

Finally, as a consequence of Theorem \ref{stabilTheorem-2} and Lemma \ref{caraGround}, we obtain the orbital stability result claimed in Theorem \ref{OrbstabilThm} in the subcritical setting of fKdV \eqref{EQ:fKdV}.


\section{Instability: Preliminaries}
\label{InstaPrelimC}

We now would like to investigate the instability of traveling solitary waves of \eqref{EQ:fKdV}. We use the approach to prove the instability of solitary waves in gKdV and its higher dimensional generalization Zakharov-Kuznetsov equations (where the dispersion is given by local operators) as in  \cite{Combet2010,FHR2019a,FHR2019b,FHR2019c}.

Recall the unique positive ground state solution $Q$ of 
\eqref{NLEQ} and the linearized around $Q$ operator, 
which has the following properties. 

\begin{theorem}[Properties of $L$] \label{propL} The following holds for the  operator $L$: \begin{itemize}
\item 
$\ker L=\spn \{\partial_{x_1} Q,\partial_{x_2} Q,\dots, \partial_{x_d} Q\}$.

\item 
$L$ has a unique single negative eigenvalue $-\lambda_0$ (with $\lambda_0>0$) associated to a positive, radially symmetric eigenfunction $\chi_0$.  Moreover,
    \begin{equation}
|\chi_0(x_1,\dots,x_d)|\lesssim \langle x \rangle^{-\alpha-d}, \, \, \text{ for all }\,\, x=(x_1,\dots,x_d)\in \mathbb{R}^d.
    \end{equation}

\item (Orthogonality Condition I) 
There exists some $c_0>0$ such that for all $f\in H^{\frac{\alpha}{2}} (\mathbb{R}^d)$ with $f$ orthogonal to the set~ $\spn\{\chi_0,\partial_{x_1}Q,\partial_{x_2}Q, \dots, \partial_{x_d}Q\}$, that is,
    \begin{equation}
    (f,\chi_0)=(f,\partial_{x_j}Q)=0, \qquad \text{ for all }\, j=1,\dots, d
    \end{equation}
one has
\begin{equation}\label{Coercond}
(Lf,f)\geq c_0 \|f\|_{H^{\frac{\alpha}{2}}}^2.
\end{equation}

\item (Orthogonality Condition II) 
There exists $k_1,k_2>0$ such that for all $\epsilon \in H^{\frac{\alpha}{2}} (\mathbb{R}^d)$, satisfying $\epsilon \bot \{\partial_{x_1} Q, \partial_{x_2} Q,\dots, \partial_{x_d} Q\}$, one has
\begin{equation}
(L\epsilon,\epsilon)=\int |D^{\frac{\alpha}{2}}\epsilon|^2+\int \epsilon^2-\int Q^{m-1}\epsilon^2 \geq k_1\|\epsilon\|_{H^{\frac{\alpha}{2}}}^2-k_2|(\epsilon,\chi_0)|^2.
\end{equation}
\end{itemize}
\end{theorem}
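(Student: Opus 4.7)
The plan is to treat the four assertions in the order they are stated, leaning heavily on the ground-state structure and on the nondegeneracy/uniqueness results from Frank--Lenzmann--Silvestre cited in Section \ref{sectionPrelimDecay}. I would first dispatch the kernel characterization: differentiating \eqref{NLEQ} in $x_j$ immediately gives $L\partial_{x_j} Q = 0$, so $\spn\{\partial_{x_j} Q\}_{j=1}^d \subseteq \ker L$, and the reverse inclusion is precisely the nondegeneracy result of \cite{FLS2016}. For the negative eigenvalue, the Morse index $1$ condition built into the definition of ground state furnishes a single simple negative eigenvalue $-\lambda_0$. The first eigenfunction $\chi_0$ may be taken positive because $|\chi_0|$ still realizes the infimum of the Rayleigh quotient via the contraction $\|D^{\frac{\alpha}{2}}|f|\|_{L^2}\leq \|D^{\frac{\alpha}{2}}f\|_{L^2}$, and radiality follows from simplicity combined with rotational covariance of $L$ (since $Q$ is radial). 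The decay estimate $|\chi_0(x)|\lesssim \langle x\rangle^{-\alpha-d}$ I would extract by rewriting the eigenvalue equation as
\begin{equation*}
\chi_0 = (1+\lambda_0+D^{\alpha})^{-1}(Q^{m-1}\chi_0)
\end{equation*}
and running the iteration scheme from Lemma \ref{LemmaQprop}: the convolution kernel of $(1+\lambda_0+D^{\alpha})^{-1}$ enjoys the same $\langle x\rangle^{-\alpha-d}$ asymptotics as $\widetilde{G}_{\alpha}$, while $Q^{m-1}\chi_0$ has pointwise decay that bootstraps through Lemma \ref{lemmadeca}.

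For Orthogonality Condition I, I would invoke self-adjointness of $L$ on $L^2(\mathbb{R}^d)$ together with the observation that multiplication by $Q^{m-1}$ is a relatively compact perturbation of $D^{\alpha}+1$ (since $Q^{m-1}(x)\to 0$ as $|x|\to\infty$ by Lemma \ref{LemmaQprop}). Weyl's theorem then locates the essential spectrum of $L$ at $[1,\infty)$, so everything below $1$ is discrete; combined with the preceding two items, the only spectrum at or below zero is $\{-\lambda_0,0\}$, with $0$ of multiplicity $d$. Hence on the $L^2$-orthogonal complement of $\spn\{\chi_0,\partial_{x_1}Q,\dots,\partial_{x_d}Q\}$ one gets $L^2$-coercivity $(Lf,f)\geq \mu_1\|f\|_{L^2}^2$ for some $\mu_1>0$. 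To upgrade to $H^{\frac{\alpha}{2}}$, I use the standard convex combination trick: for small $\theta\in(0,1)$,
\begin{equation*}
(Lf,f)=\theta(Lf,f)+(1-\theta)(Lf,f)\geq \theta\bigl(\|D^{\frac{\alpha}{2}}f\|_{L^2}^2+\|f\|_{L^2}^2\bigr)-\theta\|Q^{m-1}\|_{L^\infty}\|f\|_{L^2}^2+(1-\theta)\mu_1\|f\|_{L^2}^2,
\end{equation*}
and choose $\theta$ small enough so that the coefficient of $\|f\|_{L^2}^2$ remains positive, producing \eqref{Coercond}.

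For Condition II I would reduce to Condition I by decomposing $\epsilon=a\chi_0+\epsilon^{\perp}$ with $\epsilon^{\perp}\perp\chi_0$ and $a=(\epsilon,\chi_0)/\|\chi_0\|_{L^2}^2$. The key observation is that $\chi_0$, being radial, is automatically orthogonal to each $\partial_{x_j}Q$ (which is odd in $x_j$), so the assumption $\epsilon\perp\partial_{x_j}Q$ passes to $\epsilon^{\perp}$, making Condition I applicable. Since $L\chi_0=-\lambda_0\chi_0$ and the cross term $(L\chi_0,\epsilon^{\perp})=-\lambda_0(\chi_0,\epsilon^{\perp})=0$, one finds $(L\epsilon,\epsilon)=(L\epsilon^{\perp},\epsilon^{\perp})-a^2\lambda_0\|\chi_0\|_{L^2}^2$, and combining with $\|\epsilon^{\perp}\|_{H^{\frac{\alpha}{2}}}^2\geq \tfrac{1}{2}\|\epsilon\|_{H^{\frac{\alpha}{2}}}^2-a^2\|\chi_0\|_{H^{\frac{\alpha}{2}}}^2$ gives the stated inequality with explicit $k_1,k_2$.

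The main obstacle I anticipate is the decay estimate for $\chi_0$: proving that the \emph{linear} eigenfunction inherits the full polynomial decay dictated by the resolvent kernel requires carefully adapting the commutator/iteration machinery of Lemma \ref{LemmaQprop}, now with the nonlinear factor $Q^m$ replaced by the product $Q^{m-1}\chi_0$ whose decay must be fed back into the iteration. The other three parts are essentially spectral-theoretic and follow standard lines once positivity, simplicity, and radiality of $\chi_0$ are in hand.
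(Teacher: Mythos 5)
Your argument for Orthogonality Condition II is, up to notation, the same as the paper's: you write $\epsilon = a\chi_0 + \epsilon^\perp$, the paper writes $\epsilon_1 = \epsilon - a\chi_0$ with the same $a$, and both then combine the coercivity of Condition I applied to $\epsilon^\perp$ (resp. $\epsilon_1$) with the algebraic identity $(L\epsilon,\epsilon) = (L\epsilon^\perp,\epsilon^\perp) - a^2\lambda_0\|\chi_0\|_{L^2}^2$. For the first three bullets the paper simply cites \cite{FL2013,FLS2016} and defers the decay of $\chi_0$ to Lemma~\ref{LemmaChiprop}; you instead sketch the underlying spectral-theoretic arguments (nondegeneracy, Morse index one, relative compactness/Weyl to locate the essential spectrum at $[1,\infty)$, and the convex-combination upgrade from $L^2$- to $H^{\alpha/2}$-coercivity), and your resolvent-iteration for the decay of $\chi_0$ is exactly the mechanism of Lemma~\ref{LemmaChiprop}. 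All of these steps are correct and standard; they are the content of the cited references rather than a genuinely different route.

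One small point worth tightening: the Rayleigh-quotient argument via $\|D^{\alpha/2}|f|\|_{L^2}\le\|D^{\alpha/2}f\|_{L^2}$ only shows that $\chi_0$ may be taken \emph{non-negative} (and, by simplicity, that it has a definite sign). Strict positivity of $\chi_0$ requires an additional step (e.g.\ writing $\chi_0 = (1+\lambda_0+D^{\alpha})^{-1}(Q^{m-1}\chi_0)$ and using that the resolvent kernel $\widetilde G_{\alpha,\lambda_0}$ is strictly positive, or a fractional strong maximum principle). This is established in the cited references, but as written your sketch leaves that gap.
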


\begin{proof}
The first three properties in Theorem \ref{propL} are proved in \cite{FL2013,FLS2016}. We show the Orthogonality Condition II. Setting $a=(\epsilon,\chi_0)\|\chi_0\|_{L^2}^{-2}$ and $\epsilon_1=\epsilon-a \chi_0$, by \eqref{Coercond}, we have
\begin{equation}\label{E:4.5}
\begin{aligned}
(L \epsilon_1,\epsilon_1) 
\geq & \, c_0\|\epsilon\|_{H^{\frac{\alpha}{2}}}^{2}-2c_0 a(\epsilon,\chi_0)_{H^{\frac{\alpha}{2}}}+c_0 a^2\|\chi_0\|_{H^{\frac{\alpha}{2}}}^{2} \\
\geq & \, \frac{c_0}{2}\|\epsilon\|_{H^{\frac{\alpha}{2}}}^2+(c_0-c_1)a^{2}\|\chi_0\|_{H^{\frac{\alpha}{2}}}^2,
\end{aligned}
\end{equation}
where by $(\cdot,\cdot)_{H^{\frac{\alpha}{2}}}$ we denoted the inner product in  $H^{\frac{\alpha}{2}}(\mathbb{R}^d)$ and used Cauchy-Schwarz and Young's inequalities
$$
|2c_0 a(\epsilon,\chi_0)_{H^{\frac{\alpha}{2}}}|\leq c_1 a^2\|\chi_0\|_{H^{\frac{\alpha}{2}}}^2+\frac{c_0}{2}\|\epsilon\|_{H^{\frac{\alpha}{2}}}^2,
$$
for some constant $c_1>0$. On the other hand, 
\begin{equation} \label{E:4.7}
\begin{aligned}
(L\epsilon_1,\epsilon_1)=&(L\epsilon,\epsilon)-2a(\epsilon,L\chi_0)+a^2(L\chi_0, \chi_0)\\
=&(L\epsilon,\epsilon)+2a\lambda_0\, (\epsilon,\chi_0)-a^2\lambda_0\|\chi_0\|_{L^2}^2.
\end{aligned}
\end{equation}
Putting together \eqref{E:4.5} and \eqref{E:4.7}, and recalling that $a=(\epsilon,\chi_0)\|\chi_0\|_{L^2}^{-2}$, one can find a universal constant $\widetilde{c}_1 \in \mathbb{R}$ such that
\begin{equation}
\begin{aligned}
(L \epsilon, \epsilon) \geq \frac{c_0}{2}\|\epsilon\|_{H^{\frac{\alpha}{2}}}^{2}+\widetilde{c}_1|(\epsilon,\chi_0)|^2,
\end{aligned}
\end{equation}
completing the proof.
\end{proof}
We also require some regularity and spatial decay properties of the function $\chi_0$. 
\begin{lemma}\label{LemmaChiprop} 
Let $0< \alpha<2$ and $0<m<m_{\ast}$ be an integer with $m_{\ast}$ defined by \eqref{defcritm}. Let $\chi_0$ be a positive radial symmetric eigenfunction associated to the eigenvalue $-\lambda_0$, $\lambda_0>0$ of the operator $L$. Then it follows  $\chi_0\in H^{\infty}(\mathbb{R}^d)=\bigcap_{s>0}H^{s}(\mathbb{R}^d)$. Moreover, for any multi-index $\beta$,
\begin{equation}\label{decderchi}
    |\partial^{\beta} \chi_0(x)|\lesssim \langle x \rangle^{-\alpha-d-|\beta|}.
\end{equation}
\end{lemma}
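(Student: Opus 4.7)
The plan is to follow verbatim the strategy used for Lemma \ref{LemmaQprop}, replacing the self-referencing identity \eqref{eqSNLFS} for $Q$ with the corresponding eigenfunction identity for $\chi_0$. Since $L\chi_0 = -\lambda_0 \chi_0$ with $\lambda_0>0$, we rewrite
\begin{equation*}
((1+\lambda_0) + D^{\alpha})\chi_0 = Q^{m-1}\chi_0,
\qquad \text{hence} \qquad
\chi_0 = ((1+\lambda_0)+D^{\alpha})^{-1}(Q^{m-1}\chi_0).
\end{equation*}
The kernel of $((1+\lambda_0)+D^\alpha)^{-1}$ is $L^1$ and satisfies $|x|^{d+\alpha}$-decay with a limit at infinity, by the same argument as in \cite[Lemma C.1]{FLS2016} (the shift $1\mapsto 1+\lambda_0$ only rescales constants).

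First I would prove $\chi_0\in H^{\infty}(\mathbb{R}^d)$ by bootstrapping. Starting from $\chi_0\in H^{\alpha/2}$ (in fact $H^{\alpha}$ as an eigenfunction of $L$), the fractional Leibniz estimate \eqref{sestimQ}, together with the fact that $Q\in H^\infty$ with rapid decay from Lemma \ref{LemmaQprop}, yields $Q^{m-1}\chi_0\in H^s$ whenever $\chi_0\in H^s$. Then the identity above gains $\alpha$ derivatives, so an induction on $s=k\alpha$ gives $\chi_0\in H^{\infty}$. In particular $\chi_0\in L^\infty$ by Sobolev embedding.

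Next I would establish the base case $|\chi_0(x)|\lesssim \langle x\rangle^{-\alpha-d}$. Since $\chi_0\in L^\infty$ and $Q^{m-1}(x)\lesssim \langle x\rangle^{-(d+\alpha)(m-1)}$ by Lemma \ref{LemmaQprop}, with $m\geq 2$ the product $Q^{m-1}\chi_0$ decays faster than $\langle x\rangle^{-(d+\alpha)}$ and tends to zero when multiplied by $|x|^{d+\alpha}$. Lemma \ref{lemmadeca} applied to the convolution representation gives the desired bound. The inductive step on low-order derivatives $|\eta|=1,2,\dots$ proceeds exactly as in the base-case part of Lemma \ref{LemmaQprop}: differentiate the convolution identity, isolate the principal term, and apply Lemma \ref{lemmadeca} once the previously established decays are in hand.

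For the higher-order decay $|\partial^\eta \chi_0(x)|\lesssim \langle x\rangle^{-\alpha-d-\kappa}$ for $|\eta|\geq \kappa$, I would mimic the inductive procedure used in the $\kappa\geq 1$ case of Lemma \ref{LemmaQprop}, applied to the multiplier $x^\beta \partial^\eta \chi_0$, with $Q^m$ replaced by $Q^{m-1}\chi_0$. The commutator identity of Lemma \ref{conmident} and the kernel decay of Lemma \ref{kernDecay} are independent of whether the operator is $(1+D^\alpha)^{-1}$ or $((1+\lambda_0)+D^\alpha)^{-1}$, so they apply without modification. The only piece that needs to be re-verified is the analogue of Lemma \ref{decayQm} with $Q^m$ replaced by $Q^{m-1}\chi_0$; this follows from the Leibniz rule since each factor inherits at least the decay $\langle x\rangle^{-\alpha-d}$ (and its derivatives the decays already obtained inductively). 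Assembling the bounds \eqref{concludecayeq3}, \eqref{concludecayeq4}, \eqref{concludecayeq5} in this modified setting gives $|x^\beta \partial^\eta \chi_0(x)|\lesssim \langle x\rangle^{-\alpha-d}$ whenever $|\beta|=\kappa$ and $|\eta|\geq \kappa$, which is exactly \eqref{decderchi}.

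The main obstacle I anticipate is purely bookkeeping rather than conceptual: one must keep track of the fact that $Q^{m-1}\chi_0$ is a product of two functions (one of which is only known to decay at the rate being proved), so the inductive hypothesis on $\chi_0$ has to be used inside the analogue of Lemma \ref{decayQm} at each step. Since $Q$ and all its derivatives satisfy the stronger bound from Lemma \ref{LemmaQprop} (already fully proved), the inductive closure on $\chi_0$ goes through: at stage $\kappa$, the factor $\partial^{\gamma_1}(Q^{m-1})$ carries decay of order $-(d+\alpha)$ or better, and $\partial^{\gamma_2}\chi_0$ carries the decay already established at stage $\leq \kappa-1$, which suffices.
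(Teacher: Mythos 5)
Your proposal matches the paper's proof: both rewrite the eigenvalue equation as $\chi_0=\big((1+\lambda_0)+D^{\alpha}\big)^{-1}(Q^{m-1}\chi_0)$, bootstrap regularity via the fractional Leibniz rule, and then establish the decay by running the machinery of Lemma~\ref{LemmaQprop} — Lemmas~\ref{lemmadeca}, \ref{decayQm}, \ref{conmident}, and \ref{kernDecay} — with $Q^m$ replaced by $Q^{m-1}\chi_0$ and the kernel $\widetilde G_{\alpha}$ replaced by the shifted kernel $\widetilde G_{\alpha,\lambda_0}$. The one issue you flag, namely that the analogue of Lemma~\ref{decayQm} must mix the fully established decay of $Q$ with the inductive decay of $\chi_0$, is correctly identified and resolved; the paper's own proof leaves this step implicit, so your write-up is simply a more explicit rendering of the same argument.
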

\begin{proof}
Since $L\chi_0=-\lambda_0\chi_0$, we have
 \begin{equation}\label{eqChi}
        \begin{aligned}
        D^{\beta}\chi_0=&\big(D^{\alpha}+1+\lambda_0\big)^{-1}D^{\beta}(Q^{m-1}\chi_0)\\
        =&\widetilde{G}_{\alpha,\lambda_0}\ast D^{\beta}(Q^{m-1}\chi_0),
        \end{aligned}
    \end{equation}
for any $\beta>0$, where $\widetilde{G}_{\alpha,\lambda_0}\in L^1(\mathbb{R}^d)$ denotes the kernel associated to the operator $\big(D^{\alpha}+1+\lambda_0\big)^{-1}$, which satisfies \eqref{Decayderieq1} (see for example \cite[Lemma C.1]{FLS2016}). Thus, since $Q^{m-1}\chi_0\in L^2(\mathbb{R}^d)$, setting $\beta=\alpha$ in \eqref{eqChi}, we have $\chi_0\in H^{\alpha}(\mathbb{R}^d)$. We can argue by induction on $\beta=k\alpha$, with $k\geq 0$ integer 
to obtain that $\chi_0\in H^{\infty}(\mathbb{R}^d)$. A key observation here is that if $\chi_0\in H^{k\alpha}(\mathbb{R}^d)$, then $Q^{m-1}\chi_0\in H^{k\alpha}(\mathbb{R}^d)$, which can be easily obtained as a consequence of the fractional Leibniz rule \eqref{fLR} as it is done in the proof of Lemma \ref{LemmaQprop}.

Once we have established that $\chi_0\in H^{\infty}(\mathbb{R}^d)$, using \eqref{eqChi} and the spatial decay properties of $Q$, we can follow the arguments in the proof of Lemma \ref{LemmaQprop} to obtain \eqref{decderchi}. We remark that Lemmas \ref{lemmadeca}, \ref{decayQm}, \ref{conmident}, and \ref{kernDecay} are fundamental to obtain this inequality.
\end{proof}
Next, we define the scaling generator operator $\Lambda$ by
\begin{equation}\label{opeLam}
\Lambda f=\frac{1}{m-1}f+\frac{1}{\alpha} x\cdot \nabla f.
\end{equation}
\begin{lemma}\label{propGamop}
The following identities hold
\begin{itemize}
\item[(a)] $L(\Lambda Q)=-Q$,
\item[(b)] $\int Q \Lambda Q=\frac{2\alpha-d(m-1)}{2\alpha(m-1)}\int Q^2<0$, $\frac{2\alpha}{d}+1<m$, and $\int Q\Lambda Q=0$ if $m=\frac{2\alpha}{d}+1$.
\end{itemize}
\end{lemma}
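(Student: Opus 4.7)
\medskip

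\noindent\textbf{Proof proposal.} My plan is to obtain (a) from the scaling structure of the rescaled equation \eqref{EQ:groundState} by differentiating in the speed parameter $c$, and then to obtain (b) by a direct computation with integration by parts.

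For part (a), recall from \eqref{E:Qscaled} that $Q_c(x)=c^{\frac{1}{m-1}}Q(c^{\frac{1}{\alpha}}x)$ solves $cQ_c+D^{\alpha}Q_c-\tfrac{1}{m}Q_c^m=0$ for every $c>0$. The regularity and decay of $Q$ provided by Lemma \ref{LemmaQprop} guarantee that the map $c\mapsto Q_c$ is differentiable (in every $H^s$) in a neighborhood of $c=1$, so that we may differentiate the identity in $c$ and evaluate at $c=1$ to obtain
\begin{equation*}
Q+\partial_c Q_c\big|_{c=1}+D^{\alpha}\partial_c Q_c\big|_{c=1}-Q^{m-1}\partial_c Q_c\big|_{c=1}=0,
\end{equation*}
i.e., $L\big(\partial_c Q_c|_{c=1}\big)=-Q$. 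A direct computation using the explicit form of $Q_c$ gives
\begin{equation*}
\partial_c Q_c(x)\big|_{c=1}=\tfrac{1}{m-1}Q(x)+\tfrac{1}{\alpha}x\cdot\nabla Q(x)=\Lambda Q(x),
\end{equation*}
which yields $L(\Lambda Q)=-Q$.

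For part (b), I would simply expand and integrate by parts. Using the decay from Lemma \ref{LemmaQprop} (in particular, $Q$ and $x\cdot\nabla Q$ decay at rate $\langle x\rangle^{-(\alpha+d)}$ and $\langle x\rangle^{-(\alpha+d)}$ respectively, so $Q(x\cdot\nabla Q)\in L^1(\mathbb R^d)$), we get
\begin{equation*}
\int Q\,\Lambda Q\,dx=\tfrac{1}{m-1}\int Q^2\,dx+\tfrac{1}{2\alpha}\int x\cdot\nabla(Q^2)\,dx=\tfrac{1}{m-1}\int Q^2\,dx-\tfrac{d}{2\alpha}\int Q^2\,dx,
\end{equation*}
where in the last step I used the identity $\int x\cdot\nabla f\,dx=-d\int f\,dx$, valid for any integrable $f$ of sufficient decay (again justified by Lemma \ref{LemmaQprop}). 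Combining the coefficients gives
\begin{equation*}
\int Q\,\Lambda Q\,dx=\frac{2\alpha-d(m-1)}{2\alpha(m-1)}\int Q^2\,dx,
\end{equation*}
and the sign assertions follow immediately from the hypotheses on $m$: the bracket vanishes precisely when $m=\tfrac{2\alpha}{d}+1$ (the $L^2$-critical case), and is strictly negative when $m>\tfrac{2\alpha}{d}+1$ since the denominator $2\alpha(m-1)$ is positive.

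Neither step involves a serious obstacle: the only subtlety is justifying the differentiation in $c$ for part (a) and the integration by parts for part (b), but both are immediate consequences of the decay estimate \eqref{decderQ} established in Lemma \ref{LemmaQprop}. The expected ``main point,'' more than a technical obstacle, is simply the observation that $\Lambda Q$ is exactly the scaling derivative $\partial_c Q_c|_{c=1}$, so that the linearization $L$ inherits the scaling identity from the nonlinear equation.
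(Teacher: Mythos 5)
Your proof is correct, but your route to part~(a) is genuinely different from the paper's. The paper applies $L$ directly to $\Lambda Q$ and handles the term $D^{\alpha}(x\cdot\nabla Q)$ via the commutator identity $[D^{\alpha},x_j]\partial_{x_j}f=-\alpha D^{\alpha-2}\partial_{x_j}^2f$, combined with the equation \eqref{NLEQ}, to produce $-Q$; this is a direct but somewhat bare-hands calculation with the nonlocal operator. You instead identify $\Lambda Q$ as the derivative $\partial_c Q_c\big|_{c=1}$ of the scaling family \eqref{E:Qscaled} and differentiate the $c$-parametrized equation \eqref{EQ:groundState}, which transfers the work to a one-line chain rule. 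Your argument is cleaner conceptually (it explains \emph{why} $L(\Lambda Q)=-Q$: linearization of the equation in the scaling direction) and avoids the commutator identity entirely; the cost is that you must justify the $c$-differentiability of $c\mapsto Q_c$ in a sufficiently strong topology, which you correctly delegate to the regularity and decay from Lemma \ref{LemmaQprop}, whereas the paper's computation is purely formal/algebraic once the commutator identity is in hand. For part~(b) both arguments are the same integration by parts, and your sign analysis matches the stated conclusion.
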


\begin{proof}
Part (a) follows from the fact that $Q$ solves \eqref{NLEQ} and the identity 
\begin{equation}\label{conmIden1}
[D^{\alpha},x_j]\partial_{x_j}f=-\alpha D^{\alpha-2}\partial_{x_j}^2f,
\end{equation}
for all $j=1,\dots,d$. These identities are used to operate the fractional derivative $D^{\alpha}$. Part (b) is obtained by integration by parts.  
\end{proof}

Next, we introduce a Lyapunov-type functional or action functional $W$ (also called Weinstein functional) and its expansion around $Q$.

\begin{lemma}
\label{lemWF} Let $0<\alpha<2$, and $1<m<m_{\ast}$ be an integer with $m_{\ast}$ defined in \eqref{defcritm}. Let $Q$ be a positive $H^{\frac{\alpha}2}$ solution of \eqref{NLEQ}.  Recall \eqref{E:mass}, \eqref{E:energy}, and define
\begin{equation*}
W[u]=E[u]+M[u]
\end{equation*}
Then
\begin{equation}
W[Q+\epsilon]=W[Q]+\frac{1}{2}(L \epsilon,\epsilon)+K[\epsilon]
\end{equation}
with $K: H^{\frac{\alpha}{2}}(\mathbb{R}^d)\rightarrow \mathbb{R}$ such that
\begin{equation}\label{Kope1}
|K[\epsilon]|\lesssim \sum_{k=3}^{m+1} \|D^{\frac{\alpha}{2}}\epsilon\|_{L^2}^{\frac{d(k-2)}{\alpha}}\|\epsilon\|_{L^2}^{k-\frac{d(k-2)}{\alpha}}.
\end{equation}
\end{lemma}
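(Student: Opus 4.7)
The plan is a direct Taylor/binomial expansion of $W[Q+\epsilon]$ around $Q$, followed by an application of the Gagliardo--Nirenberg inequality to control the remainder. Since $m$ is an integer, the nonlinear term $\int (Q+\epsilon)^{m+1}$ admits a finite binomial expansion, and this is what makes the argument elementary.

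\textbf{Step 1 (Expansion).} Writing $W[u]=\tfrac12\int |D^{\alpha/2}u|^2+\tfrac12\int u^2-\tfrac{1}{m(m+1)}\int u^{m+1}$ and substituting $u=Q+\epsilon$, I would expand each term. The quadratic pieces are bilinear, so
\begin{equation*}
\tfrac12\!\int |D^{\alpha/2}(Q{+}\epsilon)|^2+\tfrac12\!\int (Q{+}\epsilon)^2 = \tfrac12\!\int |D^{\alpha/2}Q|^2+\tfrac12\!\int Q^2+\!\int\! (D^\alpha Q+Q)\epsilon+\tfrac12\!\int(|D^{\alpha/2}\epsilon|^2+\epsilon^2),
\end{equation*}
and the binomial theorem yields
$\int (Q+\epsilon)^{m+1}=\sum_{k=0}^{m+1}\binom{m+1}{k}\int Q^{m+1-k}\epsilon^k$.

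\textbf{Step 2 (Collecting by order in $\epsilon$).} The $\epsilon^0$ terms reassemble into $W[Q]$. The $\epsilon^1$ coefficient equals $\int\!(D^\alpha Q+Q-\tfrac{1}{m}Q^m)\,\epsilon\,dx$, which vanishes because $Q$ solves the ground state equation \eqref{NLEQ}. The $\epsilon^2$ terms combine to
\begin{equation*}
\tfrac12\!\int |D^{\alpha/2}\epsilon|^2+\tfrac12\!\int \epsilon^2-\tfrac{1}{2}\!\int Q^{m-1}\epsilon^2 \,=\, \tfrac12(L\epsilon,\epsilon),
\end{equation*}
by definition \eqref{linearizedop} of $L$. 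The remaining terms define
\begin{equation*}
K[\epsilon] \,=\, -\frac{1}{m(m+1)}\sum_{k=3}^{m+1}\binom{m+1}{k}\int Q^{m+1-k}\epsilon^k\,dx.
\end{equation*}

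\textbf{Step 3 (Bounding $K[\epsilon]$).} For each $3\le k\le m+1$, since $Q\in L^\infty(\mathbb R^d)$ by Lemma~\ref{LemmaQprop} (or Proposition~\ref{existTHR}), I would estimate
\begin{equation*}
\Bigl|\int Q^{m+1-k}\epsilon^k\Bigr|\le \|Q\|_{L^\infty}^{m+1-k}\,\|\epsilon\|_{L^k}^k.
\end{equation*}
Then I would invoke the fractional Gagliardo--Nirenberg inequality (which is the $L^k$ version of \eqref{GNineq} with $\theta=\frac{d(k-2)}{\alpha k}\in [0,1]$; the upper bound $\theta\le 1$ uses the intercritical assumption $k\le m+1<m_\ast+1$), giving
\begin{equation*}
\|\epsilon\|_{L^k}^k\,\lesssim\,\|D^{\frac{\alpha}{2}}\epsilon\|_{L^2}^{\frac{d(k-2)}{\alpha}}\,\|\epsilon\|_{L^2}^{k-\frac{d(k-2)}{\alpha}}.
\end{equation*}
Summing the resulting bounds over $k=3,\dots,m+1$ yields exactly \eqref{Kope1}.

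The computation is routine; the only mild point to check is that the Gagliardo--Nirenberg exponents lie in the admissible range, which is precisely guaranteed by the restriction $m<m_\ast$ from \eqref{defcritm}. No estimate on the nonlocal operator $D^\alpha$ beyond the quadratic identity defining $L$ is needed, because the nonlinearity is a pure power of an integer exponent.
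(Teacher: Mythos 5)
Your proof is correct and follows essentially the same route as the paper: a binomial expansion of $W[Q+\epsilon]$, cancellation of the linear term via the ground state equation, identification of the quadratic part with $\frac12(L\epsilon,\epsilon)$, and then bounding $K[\epsilon]$ using $Q\in L^\infty$ together with the fractional Gagliardo--Nirenberg inequality applied to $\|\epsilon\|_{L^k}^k$. No substantive differences.
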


\begin{proof}
A simple computation and the fact that $Q$ solves \eqref{NLEQ} yield 
\begin{equation}\label{eq:insta1}
\begin{aligned}
E[Q+\epsilon]&=\frac{1}{2}\int |D^{\frac{\alpha}{2}}(Q+\epsilon)|^2-\frac{1}{m(m+1)}\int (Q+\epsilon)^{m+1}\\
&=E[Q]+\frac{1}{2}\int D^{\alpha}\epsilon \epsilon+\int D^{\alpha}Q \epsilon-\frac{1}{m}\int Q^m \epsilon-\frac{1}{2}\int Q^{m-1}\epsilon^2+K[\epsilon]\\
&=E[Q]+\frac{1}{2}\int D^{\alpha}\epsilon \epsilon-\int Q\epsilon-\frac{1}{2}\int Q^{m-1}\epsilon^2+K[\epsilon],
\end{aligned}
\end{equation}
where 
\begin{equation}\label{E:4.15}
K[\epsilon]=-\frac{1}{m(m+1)}\sum_{k=3}^{m+1} \int  \binom{m+1}{k}Q^{m+1-k}\epsilon^k.
\end{equation}
Since $Q$ is in $L^{\infty}(\mathbb{R}^d)$, we have
\begin{equation*}
|K[\epsilon]|\lesssim \sum_{k=3}^{m+1}\|\epsilon\|^{k}_{L^k},
\end{equation*}
and the Gagliardo-Nirenberg inequality \eqref{GNineq} applied to the above right-hand side yields \eqref{Kope1}, completing the proof.
\end{proof}
Note that in the above proof it is essential that $m$ is an integer, it is used for a binomial expansion into a sum in \eqref{E:4.15}. 


\subsection{Decomposition of $u$ and modulation theory}

We consider the canonical parametrization of the solution $u(x,t)$ close to $Q$:
\begin{equation}\label{eqMT1}
\epsilon(x, t) 
=u(x_1+z_1(t),x_2+z_2(t),\dots,x_d+z_d(t),t)-Q(x),
\end{equation}
where $x=(x_1,\dots,x_d)\in \mathbb{R}^d$, $u$ solves \eqref{EQ:fKdV} and $z_j(t)$, $j=1,\dots,d$, are $C^1$ functions with bounded derivatives to be determined later. In the following lemma, we obtain the equation for $\epsilon(x,t)$.

\begin{lemma}[Equation for $\epsilon$]\label{Lemeps}
Let $0<\alpha<2$ and $1<m<m_{\ast}$ be an integer. Let $k_{\alpha}\geq 0$ be such that $\frac{d}{2}-\alpha<k_{\alpha}<d-\alpha$ if $d\geq 2$, and set $k_{\alpha}=1$ if $d=1$. Let $u\in L^{\infty}([0,\infty);H^{\frac{\alpha}{2}}(\mathbb{R}^d))$ be a solution of  \eqref{EQ:fKdV}. Then
\begin{equation}\label{eqMT2}
\partial_t\epsilon-\partial_{x_1} L\epsilon=(z_1'(t)-1) \, \partial_{x_1}(Q+\epsilon) +\sum_{j=2}^d z_j'(t) \, \partial_{x_j}(Q+\epsilon)-R(\epsilon),
\end{equation}
where the remainder $R(\epsilon)$ contains higher order terms of $\epsilon$
\begin{equation}\label{eqMT2.1}
R(\epsilon)=\frac{1}{m}\partial_{x_1}\Big( \sum_{k=2}^{m} \binom{m}{k} Q^{m-k}\epsilon^k\Big),
\end{equation}
and the equation \eqref{eqMT2} makes sense in the $H^{-(1+k_{\alpha}+\alpha)}(\mathbb{R}^d)$ topology. (In the one-dimensional case, the convention is that the second sum on the right-hand side of \eqref{eqMT2} is zero.)  
\end{lemma}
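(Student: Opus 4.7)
\textbf{Proof plan for Lemma \ref{Lemeps}.} The strategy is to differentiate the defining relation \eqref{eqMT1} in time and substitute the equation \eqref{EQ:fKdV} for $u$. Introduce the translated solution $\tilde u(x,t) := u(x_1+z_1(t),\dots,x_d+z_d(t),t)$, so that $\tilde u = Q + \epsilon$. Since $Q$ is time-independent, $\partial_t\epsilon = \partial_t \tilde u$, and the chain rule combined with \eqref{EQ:fKdV} applied at the translated point (using that $D^\alpha$ and $\partial_{x_j}$ commute with translations) yields
\begin{equation*}
\partial_t \epsilon = \partial_{x_1} D^{\alpha}\tilde u - \tfrac{1}{m}\partial_{x_1}(\tilde u^{\,m}) + \sum_{j=1}^d z_j'(t)\, \partial_{x_j}\tilde u.
\end{equation*}

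Next I expand $\tilde u = Q+\epsilon$ in each term on the right. For the linear-dispersive part this is trivial. For the nonlinear term, because $m>1$ is an integer, the binomial theorem applies and gives
\begin{equation*}
\tilde u^{\,m} = Q^m + m\,Q^{m-1}\epsilon + \sum_{k=2}^{m}\binom{m}{k} Q^{m-k}\epsilon^k.
\end{equation*}
Grouping the $\epsilon$-independent contributions and using \eqref{NLEQ} in the form $D^\alpha Q - \tfrac{1}{m}Q^m = -Q$, and recognizing from \eqref{linearizedop} that (since $Q>0$) $D^\alpha\epsilon - Q^{m-1}\epsilon = L\epsilon - \epsilon$, I obtain
\begin{equation*}
\partial_t\epsilon - \partial_{x_1}L\epsilon = -\partial_{x_1}(Q+\epsilon) + \sum_{j=1}^d z_j'(t)\partial_{x_j}(Q+\epsilon) - R(\epsilon),
\end{equation*}
with $R(\epsilon)$ as in \eqref{eqMT2.1}. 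Peeling off the $j=1$ summand into the form $(z_1'(t)-1)\partial_{x_1}(Q+\epsilon)$ is exactly the identity \eqref{eqMT2}.

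The main technical point, and the one I expect will require the most care, is justifying this computation in the distributional topology $H^{-(1+k_\alpha+\alpha)}$. Since $u \in L^\infty_t H^{\alpha/2}_x$, the same holds for $\epsilon$, and Lemma \ref{LemmaQprop} guarantees $Q \in H^\infty$. The linear piece $\partial_{x_1} L\epsilon = \partial_{x_1}(D^\alpha+1-Q^{m-1})\epsilon$ lies in $H^{-1-\alpha/2}$ by the fractional Leibniz rule. The problematic term is $R(\epsilon)$, whose summands $\partial_{x_1}(Q^{m-k}\epsilon^k)$ must be interpreted through duality against test functions $\varphi \in H^{1+k_\alpha+\alpha}$. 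The lower bound $k_\alpha > \tfrac{d}{2}-\alpha$ (for $d \geq 2$; for $d=1$ the choice $k_\alpha = 1$ works directly) is tailored so that $H^{k_\alpha+\alpha}\hookrightarrow L^\infty$, allowing one to pair $Q^{m-k}\epsilon^k$ (which lies in $L^{(m+1)/k}$ by the Gagliardo--Nirenberg inequality \eqref{GNineq}, using $m<m_\ast$) against $\partial_{x_1}\varphi$. The upper bound $k_\alpha < d-\alpha$ keeps the test-function class nontrivial on $\mathbb{R}^d$. Once each term is shown to lie in $H^{-(1+k_\alpha+\alpha)}$, the identity \eqref{eqMT2} follows from the pointwise computation by density of $C^\infty_c(\mathbb{R}^d)$.
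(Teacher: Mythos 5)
Your proposal is correct and follows essentially the same route as the paper: derive the pointwise identity by translating the equation and expanding $(Q+\epsilon)^m$ with the binomial theorem (exactly as the paper does, using \eqref{NLEQ} to absorb the $\epsilon$-free terms), and then justify the resulting identity in $H^{-(1+k_\alpha+\alpha)}$ by controlling each summand of $R(\epsilon)$. The only implementation difference is in the last step: the paper estimates $\|J^{-(1+k_\alpha+\alpha)}\partial_{x_1}(Q^{m-k}\epsilon^k)\|_{L^2}$ directly via the Bessel kernel $G_{k_\alpha+\alpha}$ and Young's inequality ($G_{k_\alpha+\alpha}\in L^2$ precisely when $k_\alpha+\alpha>\tfrac{d}{2}$), whereas you argue dually, pairing $Q^{m-k}\epsilon^k\in L^1$ against $\partial_{x_1}\varphi$ using $H^{k_\alpha+\alpha}\hookrightarrow L^\infty$ — the same threshold $k_\alpha+\alpha>\tfrac{d}{2}$. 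These are dual formulations of the identical estimate. One small slip: your explanation that the upper bound $k_\alpha<d-\alpha$ ``keeps the test-function class nontrivial'' is not right (all the spaces $H^{s}(\mathbb{R}^d)$ are nontrivial); in the paper this upper bound is just a convenience that places $k_\alpha+\alpha$ in the regime $0<s<d$ where the Bessel-kernel asymptotics $G_s(x)\sim|x|^{-d+s}$ near the origin apply cleanly. It is not actually needed for your embedding argument, so the remark is harmless but should be dropped or corrected.
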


\begin{proof}
We rewrite \eqref{eqMT1} as
\begin{equation}
u(x_1,\dots,x_d,t)=Q(x_1-z_1(t),\dots,x_d-z_d(t))+\epsilon(x_1-z_1(t),\dots,x_d-z_d(t),t).
\end{equation}
Since $u$ and $Q$ solve \eqref{EQ:fKdV} and \eqref{NLEQ}, respectively, from the above decomposition and expanding $(Q+\epsilon)^m$, we get \eqref{eqMT2}. All of the previous arguments can be justified in the $H^{-(1+k_{\alpha}+\alpha)}(\mathbb{R}^d)$ topology. Indeed, since $Q\in H^{\infty}(\mathbb{R}^d)$ and
\begin{equation}
\begin{aligned}
&\partial_t u\in L^{\infty}((0,\infty);H^{-(1+k_{\alpha}+\alpha)}(\mathbb{R}^d)),\\
 &\partial_{x_1} L u \in L^{\infty}((0,\infty);H^{-(1+k_{\alpha}+\alpha)}(\mathbb{R}^d)),
\end{aligned}
\end{equation}
it is not difficult to check the validity of the left-hand side of \eqref{eqMT2} in $H^{-(1+k_{\alpha}+\alpha)}(\mathbb{R}^d)$. For example, to justify the nonlinear term in \eqref{eqMT2} for $1\leq k \leq m$, we have
\begin{equation}
\|J^{-1-k_{\alpha}-\alpha}\partial_{x_1} \big(Q^{m-k}\epsilon^k\big)\|_{L^2}\leq 
\|J^{-k_{\alpha}-\alpha}\big(Q^{m-k}\epsilon^k\big)\|_{L^2} 
= 
\|G_{k_{\alpha}+\alpha}\ast \big(Q^{m-k}\epsilon^k\big) \|_{L^2},
\end{equation}
where for any $s>0$, $G_{s}$ denotes the kernel associated to the Bessel potential $J^{-s}$, which satisfies the following asymptotics: $G_{s}(x)\sim |x|^{-d+s}$ as $|x|\to 0$, if $0<s<d$; $G_{s}(x)\sim -\ln|x|$ as $|x|\to 0$, if $s=d$; $G_{s}(x)\sim 1$ as $|x|\to 0$, if $s>d$; and $G_s(x)=O(e^{-c|x|})$ as $|x|\to \infty$, for any $s>0$ (see for instance \cite[Chapter V, page 132]{SteinSingInte1970}). Then, the definition of $k_{\alpha}$ and the decay properties of $G_{k_{\alpha}+\alpha}$ imply that $G_{k_{\alpha}+\alpha}\in L^{2}(\mathbb{R}^d)$, which combined with Young's inequality, allow us to conclude
\begin{equation}\label{E:4.22}
\|J^{-k_{\alpha}-\alpha}\big(Q^{m-k}\epsilon^k\big)\|_{L^2}\lesssim 
\|G_{k_{\alpha}+\alpha}\|_{L^2}\|Q^{m-k}\epsilon^k\|_{L^1}\\
\lesssim 
\|\epsilon\|_{L^k}^k.
\end{equation}
Using the Gagliardo-Nirenberg inequality, the inequality \eqref{E:4.22} is controlled by the right hand-side of \eqref{Kope1}.
\end{proof}

Next, we introduce the modulation theory close to the ground state solution $Q$. We will use the neighborhood $U_{\omega}$ introduced in \eqref{tubdef}.

\begin{proposition}[Modulation theory]\label{propmodThe} 
Assume $0<\alpha<2$, and $1<m<m_{\ast}$. Then there exists $\omega_1>0$, $C_1>0$ and a unique $C^1$ map
\begin{equation*}
(z_1,\dots,z_d): U_{\omega_1}\mapsto \mathbb{R}^d
\end{equation*}
such that if $u\in U_{\omega_1}$ and $\epsilon_{(z_1(u),\dots,z_d(u))}$ is given by
\begin{equation}\label{eqMT3}
\epsilon_{(z_1(u),\dots, z_d(u))}(x)=u(x_1+z_1(u),\dots,x_d+z_d(u))-Q(x),
\end{equation}
where $x=(x_1,\dots,x_d)\in \mathbb{R}^d$, then
\begin{equation}\label{eqMT3.1}
\epsilon_{(z_1(u),\dots,z_d(u))} \perp \partial_{x_j}Q,
\end{equation}
for all $j=1,\dots, d$. Moreover, if $u\in U_{\omega}$ with $0<\omega<\omega_1$, then
\begin{equation}\label{eqMT4}
\|\epsilon_{(z_1(u),\dots,z_d(u))} \|_{H^{\frac{\alpha}{2}}}\leq C_1 \omega.
\end{equation}
Additionally, when $d\geq 2$, if $u$ is cylindrically symmetric, i.e., $u(x)=u(x_1,|(x_2,\dots,x_d)|)$, then taking $\omega_1$ smaller, if necessary, we can assume $z_j(u)=0$, for all $j=2,\dots,d$.
\end{proposition}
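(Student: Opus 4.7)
The plan is a classical implicit function theorem argument. Introduce the map
\begin{equation*}
G: H^{\frac{\alpha}{2}}(\mathbb{R}^d) \times \mathbb{R}^d \to \mathbb{R}^d, \qquad G_j(u,z) = \int_{\mathbb{R}^d}\bigl(u(x+z) - Q(x)\bigr)\,\partial_{x_j} Q(x)\,dx,
\end{equation*}
which is well-defined by $H^{\frac{\alpha}{2}}$--$H^{-\frac{\alpha}{2}}$ duality since $\partial_{x_j} Q \in H^{\infty}(\mathbb{R}^d)$ by Lemma \ref{LemmaQprop}. The map is linear (hence smooth) in $u$, and because $\partial_{x_j} Q$ is smooth and polynomially decaying, one can transfer $z$-derivatives onto $\partial_{x_j}Q$, yielding $\partial_{z_k} G_j(u,z) = -\int u(x+z)\,\partial_{x_j}\partial_{x_k} Q(x)\,dx$; in particular $G \in C^{1}$ jointly in $(u,z)$.

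At the base point $(Q,0)$ one has $G(Q,0) = 0$ (each integrand reduces to $\tfrac12 \partial_{x_j}(Q^{2})$), and integration by parts together with the radiality of $Q$ from Proposition \ref{existTHR} gives
\begin{equation*}
\partial_{z_k} G_j(Q,0) = \int \partial_{x_k} Q\,\partial_{x_j} Q \, dx = \frac{\delta_{jk}}{d}\,\|\nabla Q\|_{L^{2}}^{2}.
\end{equation*}
Thus the $z$-Jacobian at $(Q,0)$ is a non-zero multiple of the identity, and the implicit function theorem delivers a neighborhood $V$ of $Q$ in $H^{\frac{\alpha}{2}}$ and a unique $C^{1}$ map $z^{\ast}: V \to \mathbb{R}^d$ with $z^{\ast}(Q)=0$, $G(u, z^{\ast}(u)) = 0$ for $u\in V$, and a Lipschitz bound $|z^{\ast}(u)| \lesssim \|u - Q\|_{H^{\frac{\alpha}{2}}}$.

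To extend the construction to the full tube $U_{\omega_1}$, given $u \in U_{\omega_1}$ select any $\zeta \in \mathbb{R}^{d}$ with $\|u(\cdot - \zeta) - Q\|_{H^{\frac{\alpha}{2}}} \leq 2\omega_1$, apply the local result to $v := u(\cdot - \zeta)$ (which lies in $V$ for $\omega_1$ sufficiently small), and define $(z_1(u),\dots,z_d(u)) := z^{\ast}(v) - \zeta$. The uniqueness clause of the IFT ensures this is independent of the choice of $\zeta$, so the map is globally well-defined and $C^{1}$ on $U_{\omega_1}$. The orthogonality \eqref{eqMT3.1} is immediate from $G(u(\cdot - \zeta), z^{\ast}(v))=0$, and the estimate \eqref{eqMT4} follows from the triangle inequality
\begin{equation*}
\|\epsilon_{(z_1(u),\dots,z_d(u))}\|_{H^{\frac{\alpha}{2}}} = \|v(\cdot + z^{\ast}(v)) - Q\|_{H^{\frac{\alpha}{2}}} \leq \|v - Q\|_{H^{\frac{\alpha}{2}}} + \|Q(\cdot + z^{\ast}(v)) - Q\|_{H^{\frac{\alpha}{2}}} \leq C_1\omega,
\end{equation*}
using the Lipschitz bound and smoothness of $Q$ to control the translation term.

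For the cylindrical symmetry claim, observe that if $u$ depends only on $x_1$ and $|(x_2,\dots,x_d)|$, then for any $z_1 \in \mathbb{R}$ the function $u(x + z_1 e_1) - Q(x)$ remains cylindrically symmetric, whereas for $j\geq 2$ we have $\partial_{x_j} Q(x) = (x_j/|x|)Q'(|x|)$, which is odd under the reflection $x_j \mapsto -x_j$. Hence $G_j(u,(z_1,0,\dots,0)) = 0$ automatically by parity for every $j \geq 2$, and one is reduced to solving the single scalar equation $G_1(u,(z_1,0,\dots,0))=0$ in $z_1$, solvable for $u$ close to $Q$ by the one-dimensional IFT. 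By the uniqueness of the $d$-dimensional solution, this choice must coincide with $(z_1(u),\dots,z_d(u))$, forcing $z_j(u)=0$ for $j\geq 2$. The only real technical subtlety is the passage from the local IFT neighborhood of $Q$ to the tubular neighborhood $U_{\omega_1}$, which hinges on the uniqueness statement to eliminate the ambiguity in the auxiliary translate $\zeta$.
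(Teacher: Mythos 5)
Your argument is correct and takes essentially the same route as the paper: define the orthogonality functional $\Phi$ (your $G$), compute the $z$-Jacobian at $(Q,0)$ using the radiality of $Q$ to get a diagonal invertible matrix, apply the implicit function theorem, extend to the tube $U_{\omega_1}$ by conjugating with an auxiliary translate, and dispose of the transverse modulation parameters under cylindrical symmetry via parity together with uniqueness of the IFT. The only cosmetic differences are that you integrate by parts to put both derivatives onto $Q$ in the Jacobian computation and use an arbitrary auxiliary translate $\zeta$ (with $\zeta$-independence from IFT uniqueness and translation equivariance of $G$) where the paper picks $\widetilde{z}$ as a minimizer of the $H^{\alpha/2}$ distance; both give the same well-defined $C^1$ map.
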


\begin{proof}
The proof is based on the arguments in 
\cite{FHR2019b}, \cite[Lemma 4.4]{Bouard1996}, and \cite{MartelMerle2001}. 
We begin by defining the functional
\begin{equation}
\begin{aligned}
\Phi(z_1,\dots,z_d,u)=&\int_{\mathbb{R}^d} \big(u(x_1+z_1,\dots,x_d+z_d)-Q(x)\big) \nabla Q(x)\, dx \\
=&\Big(\int_{\mathbb{R}^d} \big(u(x_1+z_1,\dots,x_d+z_d)-Q(x)\big)\partial_{x_1} Q(x)\, dx,\\
&\dots,\int_{\mathbb{R}^d} \big(u(x+z_1,\dots,x_d+z_d)-Q(x)\big) \partial_{x_d}Q(x)\, dx  \Big)\\
=:&\big(\Phi_1(z_1,\dots,z_d,u),\dots,\Phi_d(z_1,\dots,z_d,u)\big),
\end{aligned}
\end{equation}
where $(z_1,\dots,z_d,u)\in \mathbb{R}^d\times \{u\in H^{\frac{\alpha}{2}}(\mathbb{R}^d)\, :\,\|u-Q\|_{H^{\frac{\alpha}{2}}}<\omega^{\ast}\}$ for any $\omega* >0$. 
Given that $u \in H^{\frac{\alpha}{2}}(\mathbb{R}^d)$,
\begin{equation}
\begin{aligned}
&\frac{\partial}{\partial z_j}(u(x+z_1,\dots,x_d+z_d)-Q(x))=\partial_{x_j}u(x+z_1,\dots,x_d+z_d)\in H^{\frac{\alpha}{2}-1}(\mathbb{R}^d),
\end{aligned}
\end{equation}
$j=1,\dots, d$, and using that $Q\in H^{\infty}(\mathbb{R}^d)$ (see Lemma \ref{LemmaQprop}),  we find
\begin{equation}
\begin{aligned}
&\frac{\partial \Phi_j}{\partial z_j}\Big|_{(0,\dots,0,Q)}=\|\partial_{x_j} Q\|_{L^2}^2, \qquad \frac{\partial \Phi_j}{\partial z_l}\Big|_{(0,\dots,0,Q)}=0, \, \, \text{ if } \, \, j\neq l, \, \, j,l=1,\dots,d.
\end{aligned}
\end{equation}
Thus, by the implicit function theorem, there exist a sufficiently small $\omega_1>0$ (and $\omega_1 <\omega^{\ast}$), a neighborhood $\mathcal{N}$ of $(0,\dots,0)$ in $\mathbb{R}^d$, and a unique $C^1$ 
(due to the regularity and decay of $Q$) 
map  $(z_1,\dots,z_d): \{u\in H^{\frac{\alpha}{2}}(\mathbb{R}^d)\, :\,\|u-Q\|_{H^{\frac{\alpha}{2}}}<\omega_1\} \mapsto \mathcal{N}$ such that  \eqref{eqMT4} holds true. Moreover, there exists $C>0$ such that if $\|u-Q\|_{H^{\frac{\alpha}{2}}}<\omega<\omega_1$, then $|(z_1,\dots,z_d)|\leq C\omega$. It also implies that $\|\epsilon_{(z_1(u),\dots,z_d(u))} \|_{H^{\frac{\alpha}{2}}}\leq C \omega$ for some $C>0$. 

By a further application of the implicit function theorem and similar reasoning as above, there exists $0<\omega_1<\omega^{\ast}$ such that there exists a $C^1$ function $(\widetilde{z}_1,\dots,\widetilde{z}_d): U_{\omega_1}\mapsto \mathbb{R}^d$, for which
\begin{equation*}
\begin{aligned}
\|u(\cdot+\widetilde{z}_1,\dots,\cdot+\widetilde{z}_d)-Q(\cdot)\|_{H^{\frac{\alpha}{2}}}=\inf_{y\in \mathbb{R}^d} \|u(\cdot+y_1,\dots,\cdot+y_d)-Q(\cdot)\|_{H^{\frac{\alpha}{2}}}\leq \omega_1,
\end{aligned}
\end{equation*}
for all $u\in U_{\omega_1}$. Thus, denoting by $\widetilde{z}(u)=(\widetilde{z}_1(u),\dots,\widetilde{z}_d(u))$, and setting $z^{\ast}_j(u)=z_j(u(\cdot+\widetilde{z}(u))) 
{+\widetilde{z}_j(u)} $, 
$j=1,\dots,d$, we extend  $(z_1,\dots,z_d)$ uniquely to a tube $U_{\omega_1}$ for $\omega_1>0$ sufficiently small. 

Now, arguing as above, we can apply the implicit function theorem to assure the existence of a $C^1$ mapping $\widetilde{z}_1$, defined for $u$ in some neighborhood of $Q$ such that 
\begin{equation*}
\begin{aligned}
\int u(x_1+\widetilde{z}_1(u),\dots,x_d)\,\partial_{x_1} Q(x)\, dx =0.
\end{aligned}
\end{equation*}
If $u$ is cylindrically symmetric, using the fact that $Q$ is radial (and thus, has cylindrical symmetry), it follows
\begin{equation*}
\begin{aligned}
\int u(x_1+\widetilde{z}_1(u),\dots,x_d)\,\partial_{x_j} Q(x)\, dx =0, \qquad \text{ for all } j=2,\dots,d.
\end{aligned}
\end{equation*}
Hence, up to taking $\omega_1$ small, by the uniqueness of the implicit function theorem, it follows that $(z_1(u),\dots,z_d(u))=(\widetilde{z}_1(u),0,\dots,0)$, whenever $u \in U_{\omega_1}$ has cylindrical symmetry, thus, completing the proof. \end{proof}

Assuming that $u(t)\in U_{\omega}$ with $\omega<\omega_1$, for all $t \geq 0$, and $\omega_1>0$ small enough, we can use Proposition \ref{propmodThe} to define the functions $\widetilde{z}_j(t)=z_j(u(t))$, $j=1,\dots,d$. 

\begin{definition}\label{epsidefi} 
For all $t\geq 0$, let $z_j(t)$, $j=1,\dots,d$, be such that $\epsilon_{(z_1(t),\dots,z_d(t))}$ (as defined in \eqref{eqMT3}) satisfies 
\begin{equation*}
\epsilon_{(z_1(t),\dots,z_d(t))} \perp \partial_{x_j}Q, \, \, j=1,\dots,d.
\end{equation*}
In this case, we set
\begin{equation}\label{eqMT4.1}
\epsilon(t) \stackrel{def}{=} \epsilon_{(z_1(t),\dots,z_d(t))}=u(x_1+z_1(t),\dots,x_d+z_d(t))-Q(x).
\end{equation}
\end{definition}
Notice that \eqref{eqMT4} yields
\begin{equation}
\|\epsilon(t)\|_{H^{\frac{\alpha}{2}}}\leq C_1\, \omega.
\end{equation}

\begin{lemma}[Control of the modulation parameters]\label{lemmacontrompar} Assume $0<\alpha<2$ and $1<m<m_{\ast}$ be an integer. Let $\omega_1>0$ be as in Proposition \ref{propmodThe}. There exists $0<\omega_2< \omega_1$ 
such that if for all $t\geq 0$ $u(t)\in U_{\omega_2}$ and $z_j(t)$, $j=1,\dots,d$, are $C^1$ functions of $t$
for each $j=1,\dots,d$, then the parameters $z_j(t)$, $j=1,\dots,d$, satisfy the equations for any $l=1,\dots, d$
\begin{equation}\label{eqMT5}
\begin{aligned}
(z_1'-1)&\big(\|\partial_{x_1} Q\|_{L^2}^2\,\delta_{1,l}-(\epsilon,\partial_{x_1}\partial_{x_l}Q)\big)+\sum_{j=2}^d z_j'(t)\big(\|\partial_{x_l}Q\|_{L^2}^2\,\delta_{j,l}-(\epsilon,\partial_{x_j}\partial_{x_l}Q)\big)\\
&=(\epsilon,L(\partial_{x_1}\partial_{x_l}Q))+(R(\epsilon),\partial_{x_l} Q),
\end{aligned}
\end{equation}
where $\delta_{j,l}$ 
stands for the Kronecker delta function and $R(\epsilon)$ is  as defined in \eqref{eqMT2.1}. 
Moreover, there exists a universal constant $C_2>0$ such that for any $0< \omega < \omega_2$ if $\|\epsilon(t)\|_{L^2}\leq \omega$ 
for all $t\geq 0$, then
\begin{equation}\label{eqMT6.0}
|z_1'(t)-1|+\sum_{j=2}^d|z'_j(t)|\leq C_2 \, \|\epsilon(t)\|_{H^{\frac{\alpha}{2}}}.
\end{equation}
(In the one-dimensional case $d=1$, we assume $\sum_{j=2}^d(\dots)=0$ in \eqref{eqMT5} and \eqref{eqMT6.0}.)
\end{lemma}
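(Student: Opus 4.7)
The plan is to derive \eqref{eqMT5} by differentiating the orthogonality condition $(\epsilon(t),\partial_{x_l}Q)=0$ (which holds for each $l=1,\dots,d$ by Definition \ref{epsidefi}) in $t$ and substituting the evolution equation \eqref{eqMT2}. Proposition \ref{propmodThe} supplies $z_j\in C^1$, Lemma \ref{Lemeps} gives $\partial_t\epsilon\in L^\infty([0,\infty);H^{-(1+k_\alpha+\alpha)}(\mathbb R^d))$, and Lemma \ref{LemmaQprop} gives $\partial_{x_l}Q\in H^\infty(\mathbb R^d)$, so the pairing $\langle\partial_t\epsilon,\partial_{x_l}Q\rangle$ is well-defined and vanishes. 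Substituting \eqref{eqMT2} yields
\[
0 = (\partial_{x_1}L\epsilon,\partial_{x_l}Q) + (z_1'-1)(\partial_{x_1}(Q+\epsilon),\partial_{x_l}Q) + \sum_{j=2}^d z_j'(\partial_{x_j}(Q+\epsilon),\partial_{x_l}Q) - (R(\epsilon),\partial_{x_l}Q).
\]
Three simplifications complete the derivation: (i) integration by parts in $x_1$ and self-adjointness of $L$ give $(\partial_{x_1}L\epsilon,\partial_{x_l}Q)=-(\epsilon,L(\partial_{x_1}\partial_{x_l}Q))$; (ii) radiality of $Q$ gives $(\partial_{x_j}Q,\partial_{x_l}Q)=\|\partial_{x_1}Q\|_{L^2}^2\delta_{j,l}$; (iii) integration by parts gives $(\partial_{x_j}\epsilon,\partial_{x_l}Q)=-(\epsilon,\partial_{x_j}\partial_{x_l}Q)$. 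Rearranging produces \eqref{eqMT5}.

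For \eqref{eqMT6.0}, view \eqref{eqMT5} as the linear system $Mw=b$ with unknowns $w=(z_1'-1,z_2',\dots,z_d')$, coefficient matrix $M_{l,j}=\|\partial_{x_1}Q\|_{L^2}^2\delta_{l,j}-(\epsilon,\partial_{x_j}\partial_{x_l}Q)$, and data $b_l=(\epsilon,L(\partial_{x_1}\partial_{x_l}Q))+(R(\epsilon),\partial_{x_l}Q)$. Since $\partial_{x_j}\partial_{x_l}Q\in L^2(\mathbb R^d)$ by Lemma \ref{LemmaQprop}, Cauchy--Schwarz gives $|(\epsilon,\partial_{x_j}\partial_{x_l}Q)|\lesssim\|\epsilon\|_{L^2}$, so $M=\|\partial_{x_1}Q\|_{L^2}^2\,I+O(\|\epsilon\|_{L^2})$. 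Since $\|\epsilon(t)\|_{L^2}\leq C_1\omega_2$ (from \eqref{eqMT4}), choosing $\omega_2$ sufficiently small makes $M$ invertible uniformly in $t$ with $\|M^{-1}\|\lesssim 1$.

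It remains to show $|b_l|\lesssim\|\epsilon\|_{H^{\alpha/2}}$. For the linear term, $L(\partial_{x_1}\partial_{x_l}Q)\in L^2(\mathbb R^d)$ by the smoothness and decay of $Q$, so $|(\epsilon,L(\partial_{x_1}\partial_{x_l}Q))|\lesssim\|\epsilon\|_{L^2}\leq \|\epsilon\|_{H^{\alpha/2}}$. For the nonlinear term, integration by parts gives
\[
|(R(\epsilon),\partial_{x_l}Q)|\lesssim \sum_{k=2}^m\|Q^{m-k}\partial_{x_1}\partial_{x_l}Q\|_{L^\infty}\|\epsilon\|_{L^k}^k,
\]
and combining the Sobolev embedding $H^{\alpha/2}\hookrightarrow L^k$ (valid since $k\leq m<m_*$) with $\|\epsilon(t)\|_{H^{\alpha/2}}\leq C_1\omega_2$ yields $\|\epsilon\|_{L^k}^k\lesssim\omega_2^{k-1}\|\epsilon\|_{H^{\alpha/2}}\lesssim\|\epsilon\|_{H^{\alpha/2}}$. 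Hence $|b|\lesssim\|\epsilon\|_{H^{\alpha/2}}$, and inverting produces $|w|\leq\|M^{-1}\|\,|b|\lesssim\|\epsilon\|_{H^{\alpha/2}}$, which is \eqref{eqMT6.0}. The only real care point is the justification of the duality pairings at the low regularity of $\epsilon$, already prepared by Lemma \ref{Lemeps}; otherwise the argument is a direct computation together with a perturbation of the identity.
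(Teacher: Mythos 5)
Your proposal is correct and follows essentially the same argument as the paper: you derive \eqref{eqMT5} by pairing the evolution equation \eqref{eqMT2} for $\partial_t\epsilon$ against $\partial_{x_l}Q$ in the $H^{-(1+k_\alpha+\alpha)}$--$H^{1+k_\alpha+\alpha}$ duality and using the orthogonality conditions from Definition \ref{epsidefi} (the paper phrases this as multiplying \eqref{eqMT2} by $\partial_{x_l}Q$ and integrating, yielding \eqref{E:4.33}, then invoking orthogonality), and you obtain \eqref{eqMT6.0} by viewing \eqref{eqMT5} as a linear system whose coefficient matrix is a small $L^2$-perturbation of $\|\partial_{x_1}Q\|_{L^2}^2 I$ and whose right-hand side is bounded by $\|\epsilon\|_{H^{\alpha/2}}$ via Cauchy--Schwarz and Gagliardo--Nirenberg, exactly as the paper does with the matrix $\mathcal{A}(\epsilon,Q)$ and its adjugate.
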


\begin{proof} Multiplying \eqref{eqMT2} by $\partial_l Q$, $l=1,\dots,d$, and integrating by parts yields
\begin{equation}\label{E:4.33}
\begin{aligned}
\int \partial_t\epsilon \,\partial_{x_l} Q & +\int L(\epsilon)\,\partial_{x_1}\partial_{x_l} Q =(z_1'(t)-1)\big(\int |\partial_{x_1}Q|^2\big) \delta_{1,l}-(z_1'(t)-1)\int \epsilon \, \partial_{x_1}\partial_{x_l}Q\\
&+\sum_{j=2}^d z_j'(t)\big(\int |\partial_{x_l}Q|^2\big) \, \delta_{j,l}-\sum_{j=2}^d z_j'(t)\int \epsilon \, \partial_{x_j}\partial_{x_l}Q
-\int R(\epsilon) \, \partial_{x_l}Q,
\end{aligned}
\end{equation}
where we also used that $\partial_{x_j} Q\perp \partial_{x_l} Q$, $j\neq l$. Given that $Q\in H^{\infty}(\mathbb{R}^d)$,
the above is justified by the duality pair $H^{-(1+k_{\alpha}+\alpha)}(\mathbb{R}^d)$ and $H^{1+k_{\alpha}+\alpha}(\mathbb{R}^d)$, where $k_{\alpha}\geq 0$ is fixed as in Lemma \ref{Lemeps}. Now, the orthogonality conditions for $\epsilon(t)$ (defined in \eqref{eqMT4.1}) reduces \eqref{E:4.33} to \eqref{eqMT5}. 

Next, there exists some $\omega_2>0$ sufficiently small such that
\begin{equation}\label{matrixIneq}
    |\mathcal{A}(\epsilon,Q)|\gtrsim \|\partial_{x_1}Q\|_{L^2}^2\|\partial_{x_2}Q\|_{L^2}^2\dots \|\partial_{x_d}Q\|_{L^2}^2,
\end{equation}
where
\begin{equation*}
\mathcal{A}(\epsilon,Q)=\left(\begin{array}{c c c c}
\|\partial_{x_1} Q\|_{L^2}^2-(\epsilon,\partial_{x_1}^2Q) &-(\epsilon,\partial_{x_1}\partial_{x_2} Q) & \dots & -(\epsilon,\partial_{x_1}\partial_{x_d} Q) \\
-(\epsilon,\partial_{x_1}\partial_{x_2} Q) &\|\partial_{x_2}Q\|_{L^2}^2-(\epsilon,\partial_{x_2}^2 Q) & \dots & -(\epsilon,\partial_{x_2}\partial_{x_d}Q) \\
 \vdots &\ddots & & \\
 -(\epsilon,\partial_{x_1}\partial_{x_d} Q) &\dots & \dots & \|\partial_{x_d}Q\|_{L^2}^2-(\epsilon,\partial_{x_d}^2Q) \\
\end{array}\right),
\end{equation*}
and the implicit constant in \eqref{matrixIneq} depends on the dimension, provided that $\|\epsilon\|_{L^2} \leq \omega <\omega_2$ for every $t\geq 0$ and  small enough  $\omega_2$. We notice that integration by parts yields 
\begin{equation*}
\begin{aligned}
    |(R(\epsilon),\partial_{x_l}Q)|\lesssim & \sum_{k=2}^m\|\partial_{x_1}\partial_{x_l}Q\|_{L^{\infty}}\|Q^{m-k}\epsilon^k\|_{L^1}\\
    \lesssim & \sum_{k=2}^m \|\partial_{x_1}\partial_{x_l}Q\|_{L^{\infty}}\|Q^{m-k}\|_{L^{\infty}}\|\epsilon^k\|_{L^1},
\end{aligned}
\end{equation*}
for each $l=1,\dots,d$. Now, an application of the Gagliardo-Nirenberg inequality establishes
\begin{equation*}
\begin{aligned}
\|\epsilon\|_{L^k}^k \lesssim \|D^{\frac{\alpha}{2}}\epsilon\|_{L^2}^{\frac{d(k-2)}{\alpha}}\|\epsilon\|_{L^2}^{k-\frac{d(k-2)}{\alpha}}\lesssim \|\epsilon\|_{H^{\frac{\alpha}{2}}}^k.
\end{aligned}
\end{equation*}
Thus, it follows
\begin{equation*}
\begin{aligned}
    |(R(\epsilon),\partial_{x_l}Q)|\lesssim & \sum_{k=2}^m\omega_2^{k-1}\|\epsilon\|_{H^{\frac{\alpha}{2}}}.
\end{aligned}
\end{equation*}
Using  the previous estimates, the matrix $\mathcal{A}=\mathcal{A}(\epsilon,Q)$, and the equations \eqref{eqMT5}, we obtain
\begin{equation*}
\begin{aligned}
\left|\left(\begin{array}{c}
    z_1'(t)-1   \\
    z_2'(t) \\
    \vdots \\
    z_d'(t)
\end{array}\right)\right|=&\frac{1}{|\mathcal{A}|}\left|\text{Adj}(\mathcal{A})\left(\begin{array}{c}
    (\epsilon,L(\partial_{x_1}^2Q))+(R(\epsilon),\partial_{x_1}Q)   \\
    (\epsilon,L(\partial_{x_1}\partial_{x_2}Q))+(R(\epsilon),\partial_{x_2}Q) \\
    \vdots \\
    (\epsilon,L(\partial_{x_1}\partial_{x_d}Q))+(R(\epsilon),\partial_{x_d}Q) 
\end{array}\right)\right|\\
\lesssim& \|\epsilon(t)\|_{H^{\frac{\alpha}{2}}},
\end{aligned}
\end{equation*}
where $\text{Adj}(\mathcal{A})$ denotes the adjoint matrix of $\mathcal{A}$. The above inequality assures the existence of a constant $C_2>0$ such that \eqref{eqMT6.0} holds true, completing the proof.
\end{proof}

\begin{remark}\label{remarkregulModPar}
In the case $d=1$ (or $d\geq 2$ with the solution $u$ of \eqref{EQ:fKdV} cylindrically symmetric), we do not have to assume the $C^1$ regularity of the parameters $z_j(t)$ in Lemma \ref{lemmacontrompar}. To see this, by Proposition \ref{propmodThe}, we have $z_j(t)=0$ for all $j=2,\dots, d$, and the proof that $z_1(t)$ is of class $C^1$ follows the same arguments as in \cite[Lemma 1]{BonaSoyeur1994}, provided that $\omega_2>0$ in Lemma \ref{lemmacontrompar} is sufficiently small.  
\end{remark}


\subsection{Virial-type estimates}

In this part, we introduce a virial-type functional, which is used to show instability of solitary waves in the supercritical case. Assuming to the contrary that a solitary wave is stable, on one hand, we show the upper bound for this functional. On the other hand, its time derivative is lower bounded, which implies growth in time, contradicting the first fact, the boundedness. This type of functional was used in the 1d context of the critical gKdV equation in  \cite{MartelMerle2001} 
(see also the instability argument reviewed in the supercritical gKdV case in \cite{Combet2010} and \cite{FHR2019a}). The two-dimensional version of the virial-type functional was introduced by the second author and her collaborators in \cite{FHR2019b, FHR2019c}.
In our approach it is sufficient to use a truncation of this functional, avoiding 
monotonicity (as, for instance, in \cite{FHR2019c}). 

We recall the scaling generator $\Lambda$ from \eqref{opeLam} and the eigenfunction $\chi_0$ of $L$ corresponding to the negative eigenvalue. We let $\widetilde{\sigma}\in C^{\infty}_c(\mathbb{R})$ be such that $\widetilde{\sigma}(x)=1$, if $|x|\leq 1$, and $\widetilde{\sigma}(x)=0$, if $|x|\geq 2$.
Given $A\geq 1$, we let
\begin{equation}\label{sigmaprim}
    \widetilde{\sigma}_A(x)=\widetilde{\sigma}\Big(\frac{x}{A}\Big).
\end{equation}
For each $x\in \mathbb{R}^d$, we define
\begin{equation}\label{defF}
F(x_1,\dots,x_d)=\int_{-\infty}^{x_1} \big(\Lambda Q(\kappa,x_2,\dots,x_d)+\beta\chi_0(\kappa,x_2,\dots,x_d) \big)\, d\kappa,
\end{equation}
where $\beta \in \mathbb{R}$ to be chosen later. We denote 
\begin{equation}\label{defFA}
    F_A(x_1,\dots,x_d) := F(x_1,\dots,x_d)\,\sigma_A(x_1,\dots,x_d),
\end{equation}
where $\sigma_A(x_1,\dots,x_d):=\widetilde{\sigma}_A(x_1)$ is a function of one variable.

The next lemma establishes some key results for the functions $F$ and $F_A$ introduced above.
\begin{lemma}\label{Fproperties}
Assume $0<\alpha<2$ and $1<m<m_{\ast}$ be an {integer}. 
Then for each $j=2,\dots,d$, 
\begin{equation}\label{boundF0}
\begin{aligned}
|\partial_{x_j}^{l}F(x_1,\dots,x_d)| \lesssim & \langle (x_2,\dots,x_d) \rangle^{-m_1}\Big| \int_{-\infty}^{x_1} \langle \kappa \rangle^{-m_2} \, d\kappa\Big|,
\end{aligned}
\end{equation}
for any integer $l\geq 0$, where $m_1+m_2=d+\alpha+l$, $m_2>1$. In particular, if $ x_1\leq 0$, we have
\begin{equation}\label{boundF}
\begin{aligned}
|\partial_{x_j}^{l}F(x_1,\dots,x_d)|\lesssim \langle (x_2,\dots,x_d) \rangle^{-m_1}\Big|\langle x_1 \rangle^{-m_2} \int_{-\infty}^{x_1} \langle \kappa \rangle^{-m_3} \, d\kappa\Big|,
\end{aligned}
\end{equation}
for any integer $l\geq 0$, 
where $m_1+m_2+m_3=d+\alpha+l$, $m_3>1$. 

In the one-dimensional setting, i.e., $j=1$, the convention is that \eqref{boundF0} and \eqref{boundF} are valid without incorporating the factor $\langle (x_2,\dots,x_d) \rangle^{-m_1}$, i.e., $m_1=0$ in each of the above inequalities.

Moreover, for any fixed $A\geq 1$, 
\begin{equation}
  F_A\in H^{\infty}(\mathbb{R}^d).  
\end{equation}
\end{lemma}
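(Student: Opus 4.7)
The plan is to derive all three assertions directly from the pointwise decay bounds $|\partial^{\eta} Q(x)| \lesssim \langle x \rangle^{-\alpha-d-|\eta|}$ (Lemma \ref{LemmaQprop}) and $|\partial^{\eta} \chi_0(x)| \lesssim \langle x \rangle^{-\alpha-d-|\eta|}$ (Lemma \ref{LemmaChiprop}). First I would observe that $\Lambda Q = \tfrac{1}{m-1} Q + \tfrac{1}{\alpha}\, x \cdot \nabla Q$ inherits the same decay: by the Leibniz rule, $\partial^{\eta}(x_j \partial_{x_j} Q) = x_j \partial^{\eta+e_j} Q + \eta_j \partial^{\eta} Q$, and the factor $x_j$ absorbs exactly one extra power from the stronger decay of $\partial^{\eta+e_j} Q$. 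Thus
\[
|\partial^{\eta}(\Lambda Q + \beta \chi_0)(x)| \lesssim \langle x \rangle^{-\alpha-d-|\eta|}
\]
for every multi-index $\eta$. This in particular guarantees that $F$ is well defined, since for fixed $(x_2, \dots, x_d)$ the integrand is in $L^1(\mathbb{R})$ as a function of $\kappa$.

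To prove \eqref{boundF0} for $j \in \{2, \dots, d\}$ and $l \geq 0$, I would differentiate under the integral sign to obtain $\partial_{x_j}^l F(x) = \int_{-\infty}^{x_1} \partial_{x_j}^l(\Lambda Q + \beta \chi_0)(\kappa, x_2, \dots, x_d)\, d\kappa$. The key elementary step is the split $\langle (\kappa, y) \rangle \geq \max(\langle \kappa \rangle, \langle y \rangle)$, which yields, for any non-negative $m_1, m_2$ with $m_1 + m_2 = d + \alpha + l$,
\[
\langle (\kappa, y) \rangle^{-(d+\alpha+l)} \leq \langle \kappa \rangle^{-m_2} \langle y \rangle^{-m_1}.
\]
Setting $y = (x_2, \dots, x_d)$ and integrating in $\kappa$ gives \eqref{boundF0}, with the constraint $m_2 > 1$ ensuring convergence at $-\infty$. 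The 1d convention corresponds to dropping the $\langle y \rangle^{-m_1}$ factor entirely (i.e.\ taking $m_1 = 0$).

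For the refined bound \eqref{boundF} in the regime $x_1 \leq 0$, I would exploit that the integration variable satisfies $\kappa \leq x_1 \leq 0$, so $|\kappa| \geq |x_1|$ and $\langle \kappa \rangle \geq \langle x_1 \rangle$. Splitting further $\langle \kappa \rangle^{-(m_2+m_3)} \leq \langle x_1 \rangle^{-m_2} \langle \kappa \rangle^{-m_3}$ and pulling the $\langle x_1 \rangle^{-m_2}$ factor out of the integral produces the three-factor estimate, with $m_3 > 1$ again required for integrability at $-\infty$.

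Finally, for $F_A \in H^{\infty}(\mathbb{R}^d)$: the function $F$ is $C^{\infty}$ as the indefinite integral in $x_1$ of a smooth function, and the cutoff $\sigma_A$ restricts the $x_1$-support of $F_A$ to $[-2A, 2A]$. For $d \geq 2$, choosing $m_1$ in the interval $\bigl((d-1)/2,\, d+\alpha - 1\bigr)$---nonempty since $d + 2\alpha > 1$---in \eqref{boundF0} gives enough polynomial decay in $(x_2, \dots, x_d)$ to place $F_A$ in $L^2(\mathbb{R}^d)$. For a general multi-index $\eta$, the Leibniz rule writes $\partial^{\eta} F_A$ as a finite sum of products $\partial^{\eta'} F \cdot \partial^{\eta''} \sigma_A$, where the derivatives $\partial^{\eta'} F$ obey bounds of the same type by the argument above (with $l = |\eta'|$). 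Each such term is smooth, compactly supported in $x_1$, and decays polynomially in $(x_2, \dots, x_d)$, so it lies in $L^2(\mathbb{R}^d)$. In the one-dimensional case the argument is immediate since $F$ is smooth and bounded, rendering $F_A$ smooth and compactly supported. The main, though mild, obstacle will be the bookkeeping of the indices $m_1, m_2, m_3$ to verify that all the positivity and summation constraints hold simultaneously for each choice of $l$ and $\eta$; this is essentially routine.
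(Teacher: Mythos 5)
Your proof is correct and follows essentially the same route as the paper's: split $\langle(\kappa,x_2,\dots,x_d)\rangle^{-\alpha-d-l}$ into transverse and $\kappa$ factors, use $\langle\kappa\rangle\gtrsim\langle x_1\rangle$ for $\kappa\leq x_1\leq 0$ to extract the extra decay, and combine compact $x_1$-support of $\sigma_A$ with transverse polynomial decay for the $H^\infty$ membership. The one step you make explicit that the paper leaves implicit is the verification that $\Lambda Q=\tfrac{1}{m-1}Q+\tfrac{1}{\alpha}x\cdot\nabla Q$ inherits the decay $\langle x\rangle^{-\alpha-d-|\eta|}$ from $Q$ via Leibniz; this is a genuine (if elementary) point worth flagging, since the extra factor of $x_j$ costs a power that is exactly compensated by the stronger decay of $\partial_{x_j}Q$.
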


\begin{proof}
The inequality \eqref{boundF0} is a consequence of the decay properties of $Q$ and $\chi_0$ in Lemmas \ref{LemmaQprop} and \ref{LemmaChiprop}, which imply
\begin{equation*}
    \begin{aligned}
    |\partial_{x_j}^{l}\Lambda Q(\kappa,x_2,\dots,x_d)+\beta\partial_{x_j}^{l}\chi_0(\kappa,x_2,\dots,x_d)| \lesssim & \langle (\kappa,x_2,\dots,x_d) \rangle^{-\alpha-d-l} \\
    \lesssim &  \langle (x_2,\dots,x_d) \rangle^{-m_1}\langle \kappa \rangle^{-m_2},
    \end{aligned}
\end{equation*}
since $m_1+m_2=d+\alpha+l$. The condition $\kappa\leq x_1\leq 0$ implies
$\langle x_1 \rangle \lesssim \langle \kappa \rangle$, and hence, \eqref{boundF} is a consequence of \eqref{boundF0}. 

Next, by using \eqref{boundF0}, \eqref{boundF} and the support of the function $\sigma_A$, we have
\begin{equation}\label{L2estF}
\|F_A\|_{L^2(\mathbb{R}^d)}\lesssim 
\|F\chi_{\{|x_1|< 2A\}}\|_{L^2(\mathbb{R}^d)} 
\lesssim 
A^{\frac{1}{2}}\|\langle (x_2,\dots,x_d) \rangle^{-m_1}\|_{L^2(\mathbb{R}^{d-1})} 
\lesssim 
(1+A^{\frac{1}{2}}),
\end{equation}
where the above inequality
is finite provided that, in the case $d\geq 2$, there exist $m_1$ and $m_2$ such that
\begin{equation*}
\left\{\begin{aligned}
&m_1+m_2=d+\alpha\\
&m_1>\frac{d-1}{2} \, \, \text{ and } \, m_2>1.
\end{aligned}\right.
\end{equation*}
Note that such numbers $m_1$ and $m_2$ exist, if $\alpha>\frac{1}{2}-\frac{d}{2}$. In the one-dimensional case, $d=1$, the above conditions are reduced to $m_1=\alpha>0$. Consequently, since Lemmas \ref{LemmaQprop} and \ref{LemmaChiprop} show that the spatial decay of the derivatives of $Q$ and $\chi_0$ is faster in proportion to the order of the derivative taken, by using a similar argument to that in \eqref{L2estF}, we obtain  $F_A\in H^{k}(\mathbb{R}^d)$ for all $k\geq 0$, that is, $F_A\in H^{\infty}(\mathbb{R}^d)$. 
\end{proof}

Recalling $\epsilon(t)$ from \eqref{eqMT4.1}, we are finally ready to introduce 
the virial-type functional 
\begin{equation}\label{Jfunct}
\mathcal{J}_A(t)=\int_{\mathbb{R}^d}\epsilon(t) \, F_A(x)\, dx ,
\end{equation}
for $A \geq 1$.
Note that under the conditions of Lemma \ref{Fproperties},  the above functional is well-defined for any function $\epsilon(t)\in H^{s}(\mathbb{R}^d)$, $s\in \mathbb{R}$, since $F_A\in H^{\infty}(\mathbb{R}^d)$. We conclude this section by finding the time derivative of $\mathcal{J}_A(t)$.

\begin{lemma}\label{lemmaderJ}
Assume $0<\alpha<2$ and $1<m<m_{\ast}$ be an integer. Suppose for any $t\geq 0$, $\epsilon(t)\in H^{\frac{\alpha}{2}}(\mathbb{R}^d)$ and  $z_j(t) \in C^1([0,\infty) )$  for $j=1,\dots,d$. Then the function $t\mapsto \mathcal{J}_A(t)$ is $C^{1}([0,\infty))$ and
\begin{equation}\label{derivJ}
\frac{d}{dt}\mathcal{J}_A(t)=\beta \lambda_0 \int \epsilon \chi_0\sigma_A+ R_1(\epsilon),
\end{equation}
where
\begin{equation*}
\beta := \frac{-\int Q \Lambda Q}{\int Q \chi_0}
\end{equation*}
and recalling the definition of $R$ in \eqref{eqMT2.1}, we denote
\begin{equation}
\begin{aligned}
R_1(\epsilon):=&\int \epsilon Q \sigma_A-(z_1'(t)-1)\int \epsilon (\Lambda Q+\beta \chi_0)\sigma_{A}-(z_1'(t)-1)\int Q(\Lambda Q+\beta \chi_0)(\sigma_{A}-1)\\
&-\int \epsilon[L,\sigma_A] ( \Lambda Q+\beta \chi_0)-\frac{1}{A}\int \epsilon L\big(F\sigma'_A\big)\\
&-\frac{(z'_1(t)-1)}{A}\int (Q+\epsilon)F\sigma'_A-\sum_{j=2}^d z_j'(t)\int (Q+\epsilon)(\partial_{x_j} F)\sigma_A-\int R(\epsilon)F_A,
\end{aligned}
\end{equation}
where $F$and $F_A$ are defined by \eqref{defF} and \eqref{defFA}, respectively, and 
\begin{equation*}
    \sigma'_A(x_1,\dots,x_d)=(\widetilde{\sigma})'\Big(\frac{x_1}{A}\Big).
\end{equation*}
In the one-dimensional case, we assume $\sum_{j=2}^d(\cdots)=0$ in the definition of $R_1(\epsilon)$ above.
\end{lemma}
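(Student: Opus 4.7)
The plan is to differentiate $\mathcal{J}_A(t) = \langle \epsilon(t), F_A \rangle$ in time using the evolution equation for $\epsilon$ and then integrate by parts to move all derivatives off $\epsilon$ onto $F_A$. Since Lemma \ref{Fproperties} gives $F_A \in H^{\infty}(\mathbb{R}^d)$, the duality pairing is well defined even though $\epsilon(t)$ only lives in $H^{\frac{\alpha}{2}}(\mathbb{R}^d)$, and equation \eqref{eqMT2} of Lemma \ref{Lemeps} holds in the negative Sobolev topology $H^{-(1+k_{\alpha}+\alpha)}(\mathbb{R}^d)$. The $C^1$ regularity of $t \mapsto \mathcal{J}_A(t)$ will follow once $\tfrac{d}{dt}\mathcal{J}_A$ is identified as a continuous function of $t$.

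First, substituting from the equation for $\partial_t \epsilon$ gives
\begin{equation*}
\frac{d}{dt}\mathcal{J}_A = \int \partial_{x_1}(L\epsilon)\, F_A + (z_1' - 1)\int \partial_{x_1}(Q+\epsilon)\, F_A + \sum_{j=2}^d z_j' \int \partial_{x_j}(Q+\epsilon)\, F_A - \int R(\epsilon)\, F_A.
\end{equation*}
Using $\partial_{x_1} F = \Lambda Q + \beta \chi_0$ and $F_A = F \sigma_A$, one has
\begin{equation*}
\partial_{x_1} F_A = (\Lambda Q + \beta \chi_0)\sigma_A + \frac{1}{A}\, F \sigma_A'.
\end{equation*}
Moving the $\partial_{x_1}$ in the first integral onto $F_A$ and using the self-adjointness of $L$ produces $-\int \epsilon\, L(\partial_{x_1} F_A)$. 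After the commutator split $L[(\Lambda Q + \beta\chi_0)\sigma_A] = \sigma_A L(\Lambda Q + \beta\chi_0) + [L,\sigma_A](\Lambda Q + \beta\chi_0)$, the decisive algebraic identity is
\begin{equation*}
L(\Lambda Q + \beta \chi_0) = -Q - \beta \lambda_0 \chi_0,
\end{equation*}
which follows from $L(\Lambda Q) = -Q$ in Lemma \ref{propGamop}(a) together with $L\chi_0 = -\lambda_0 \chi_0$ from Theorem \ref{propL}. This immediately produces the principal term $\beta \lambda_0 \int \epsilon \chi_0 \sigma_A$ of the claim, together with $\int \epsilon\, Q\, \sigma_A$ and the two remainders $-\int \epsilon\, [L,\sigma_A](\Lambda Q + \beta\chi_0)$ and $-\tfrac{1}{A}\int \epsilon\, L(F \sigma_A')$ appearing in $R_1(\epsilon)$.

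For the modulation terms I would integrate by parts in the corresponding variable. When $j=1$, inserting the decomposition of $\partial_{x_1} F_A$ displayed above produces a contribution $-(z_1' - 1)\int Q\,(\Lambda Q + \beta \chi_0)\sigma_A$. The crucial cancellation is built into the choice $\beta = -\int Q \Lambda Q / \int Q \chi_0$: writing $\sigma_A = (\sigma_A - 1) + 1$, the ``$+1$'' contribution equals $-(z_1' - 1)\bigl(\int Q \Lambda Q + \beta \int Q \chi_0\bigr) = 0$, leaving precisely $-(z_1' - 1)\int Q(\Lambda Q + \beta \chi_0)(\sigma_A - 1)$. For $j \geq 2$ the situation is easier, because $\sigma_A$ is independent of $x_j$, so $\partial_{x_j} F_A = (\partial_{x_j} F)\sigma_A$ and integration by parts directly yields the corresponding term of $R_1$. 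Collecting all contributions reproduces the stated formula.

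The main obstacle, as I see it, is making the self-adjointness step and the commutator split $L[\sigma_A f] = \sigma_A L f + [L, \sigma_A] f$ rigorous, given that $L = D^\alpha + 1 - Q^{m-1}$ is nonlocal and $\epsilon$ has only $H^{\frac{\alpha}{2}}$ regularity. The saving grace is that the functions to which $L$ is applied, namely $\Lambda Q + \beta \chi_0$ and $F \sigma_A'$, are smooth and rapidly decaying: the former obeys the pointwise bound $|\partial^{\eta}(\Lambda Q + \beta\chi_0)(x)| \lesssim \langle x \rangle^{-\alpha - d - |\eta|}$ on every derivative by Lemmas \ref{LemmaQprop} and \ref{LemmaChiprop}, while $F\sigma_A'$ is compactly supported in $x_1$ with Schwartz-type decay in the transverse variables by Lemma \ref{Fproperties}. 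Consequently $L(\Lambda Q + \beta \chi_0)$, $[L, \sigma_A](\Lambda Q + \beta \chi_0)$ and $L(F\sigma_A')$ all lie in $H^{\infty}(\mathbb{R}^d)$, so the pairings against $\epsilon$ are rigorously defined and continuous in $t$; this also delivers the $C^1$ assertion.
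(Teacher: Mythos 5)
Your proposal is correct and follows essentially the same route as the paper: differentiate $\mathcal{J}_A$ using \eqref{eqMT2}, integrate by parts onto $F_A$, use self-adjointness of $L$ and the commutator split, invoke $L\Lambda Q=-Q$ and $L\chi_0=-\lambda_0\chi_0$, and exploit the choice of $\beta$ together with $\sigma_A=(\sigma_A-1)+1$ to cancel $\int Q(\Lambda Q+\beta\chi_0)$. Your closing discussion of why $L(\Lambda Q+\beta\chi_0)$, $[L,\sigma_A](\Lambda Q+\beta\chi_0)$, $L(F\sigma_A')$ lie in $H^\infty$ (via Lemmas \ref{LemmaQprop}, \ref{LemmaChiprop}, \ref{Fproperties}) is a good, slightly more explicit justification of the duality pairing and the $C^1$ claim than what the paper records.
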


\begin{proof}
Given that $\epsilon$ solves \eqref{eqMT2} in $H^{-(1+\kappa_{\alpha}+\alpha)}(\mathbb{R}^d)$, where $\kappa_{\alpha}\geq 0$ is defined in Lemma \ref{Lemeps}, and that $F_A \in H^{\infty}(\mathbb{R}^d)$, we can integrate by parts and use the self-adjointness of the operator $L$ defined in \eqref{linearizedop} to get
\begin{equation}
\begin{aligned}
\frac{d}{dt}\mathcal{J}_A(t)=&-\int \epsilon L \partial_{x_1} F_A-(z_1'(t)-1)\int (Q+\epsilon)\partial_{x_1} F_A-\sum_{j=2}^d z'_j(t)\int (Q+\epsilon)\partial_{x_j}F_A\\
&-\int R(\epsilon)F_A\\
=&-\int \epsilon ( L\Lambda Q+\beta L\chi_0)\sigma_A-\int \epsilon[L,\sigma_A] ( \Lambda Q+\beta \chi_0)-\frac{1}{A}\int \epsilon L\big(F\sigma'_A\big)\\
&-(z'_1(t)-1)\int (Q+\epsilon)(\Lambda Q+\beta \chi_0)\sigma_A-\frac{(z'_1(t)-1)}{A}\int (Q+\epsilon)F\sigma'_A\\
&-\sum_{j=2}^d z'_j(t)\int (Q+\epsilon)(\partial_{x_j} F)\sigma_A-\int R(\epsilon)F_A.
\end{aligned}
\end{equation}
By Lemma \ref{propGamop}, we have $L (\Lambda Q) =-Q$, and our choice of $\chi_0$ shows $L\chi_0=-\lambda_0\chi$ with $\lambda_0>0$, thus, we get
 \begin{equation}
\begin{aligned}
\frac{d}{dt}\mathcal{J}_A(t)=&\beta \lambda_0 \int \epsilon \chi_0\sigma_A-(z'_1(t)-1)\int (Q+\epsilon)(\Lambda Q+\beta \chi_0)\sigma_A+\int \epsilon Q \sigma_A\\
&-\int \epsilon[L,\sigma_A] ( \Lambda Q+\beta \chi_0)-\frac{1}{A}\int \epsilon L\big(F\sigma'_A\big)\\
&-\frac{(z'_1(t)-1)}{A}\int (Q+\epsilon)F\sigma'_A-\sum_{j=2}^d z_j'(t)\int (Q+\epsilon)(\partial_{x_j} F)\sigma_A-\int R(\epsilon)F_A.
\end{aligned}
\end{equation}
Setting $\beta=-\frac{\int Q\Lambda Q}{\int Q \chi_0}>0$, we have
\begin{equation*}
\begin{aligned}
\int (Q+\epsilon)(\Lambda Q+\beta \chi_0)\sigma_A=&\int Q(\Lambda Q+\beta \chi_0)+\int Q(\Lambda Q+\beta \chi_0)(\sigma_A-1)\\
&+\int \epsilon(\Lambda Q+\beta \chi_0)\sigma_A  \\
=&\int Q(\Lambda Q+\beta \chi_0)(\sigma_A-1)+\int \epsilon(\Lambda Q+\beta \chi_0)\sigma_A.
\end{aligned}
\end{equation*}
Gathering the expressions together finishes the proof of the lemma.
\end{proof}

\begin{remark}
We emphasize that Theorem \ref{propL}, Lemma \ref{propGamop} and Proposition \ref{propmodThe} are valid for values of $m$ not necessarily an integer. In contrast, the fact that $m$ is an integer is crucial in Lemmas \ref{LemmaChiprop}, \ref{lemWF}, \ref{Lemeps}, \ref{lemmacontrompar}, \ref{Fproperties}, \ref{lemmaderJ}.
\end{remark}


\section{$H^{\frac{\alpha}{2}}$-instability} 
\label{SectInstabProof}

The aim of this section is to establish Theorem \ref{InstMainThm}. We assume $\max\{1-\frac{d}{2},0\}<\alpha<2$ and $\frac{2\alpha}{d}+1<m< m_{\ast}$ is an integer with $m_{\ast}$ defined in \eqref{defcritm}. For simplicity, 
we assume that the speed of the ground state is $c=1$. The general case follows by simple rescaling of our arguments and the linearized operator in \eqref{linearizedop}. Thus, we consider the following sequence of initial conditions
\begin{equation}\label{eqins1}
u_{0,n}(x):=Q_{\lambda_n}(x) \equiv \lambda_n Q(\lambda_n^{\frac{2}{d}} x) \, \text{ with } \, \lambda_n=1+\frac{1}{n},
\end{equation}
where $Q>0$ is the (unique) positive radial $H^{\frac{\alpha}2}$ solution of \eqref{NLEQ}, and thus, by Lemma \ref{LemmaQprop}, we have that $u_{0,n} \in H^{\infty}(\mathbb{R}^d)$ for each $n$.

\begin{proposition}\label{propApr}
Let $u_{0,n}$ be given by \eqref{eqins1}. Then for any integer $n > 0$, 
we have
\begin{equation}
\|u_{0,n}\|_{L^2}=\|Q\|_{L^2} \, \, \text{ and } \, \, E[u_{0,n}]<E[Q].
\end{equation}
Moreover,
\begin{equation}
\|u_{0,n}-Q\|_{H^{\frac{\alpha}{2}}} \to 0 \, \, \text{ as } \, \, n \to \infty.
\end{equation}
\end{proposition}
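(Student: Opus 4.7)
The plan is to reduce all three claims to elementary computations organised around the scaled energy $f(\lambda):=E[\lambda\,Q(\lambda^{2/d}\,\cdot)]$, whose critical structure at $\lambda=1$ is governed by the Pohozaev identities of Lemma~\ref{PHident} and the supercriticality hypothesis $m>\tfrac{2\alpha}{d}+1$.

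The mass identity is immediate from the substitution $y=\lambda_n^{2/d}x$:
\[
\|u_{0,n}\|_{L^2}^2 = \lambda_n^2\!\int_{\mathbb R^d}Q(\lambda_n^{2/d}x)^2\,dx = \lambda_n^2\,\lambda_n^{-2}\|Q\|_{L^2}^2 = \|Q\|_{L^2}^2.
\]
For the $H^{\alpha/2}$-convergence, setting $\mu_n=\lambda_n^{2/d}\to 1$ one has $u_{0,n}(x)=\mu_n^{d/2}Q(\mu_n x)$, so $u_{0,n}$ is the standard $L^2$-isometric dilation of $Q$ by $\mu_n$. Strong continuity of the dilation group on $H^{\alpha/2}(\mathbb R^d)$ (by density of $C_c^\infty$ in $H^{\alpha/2}$ together with the uniform bound $\|\mu^{d/2}f(\mu\cdot)\|_{H^{\alpha/2}}\leq C(\mu)\|f\|_{H^{\alpha/2}}$ for $\mu$ in a neighbourhood of $1$) then gives $u_{0,n}\to Q$ in $H^{\alpha/2}$.

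The substantive content is the strict energy inequality. The same change of variables combined with Plancherel produces $\|D^{\alpha/2}u_{0,n}\|_{L^2}^2=\lambda_n^{2\alpha/d}\|D^{\alpha/2}Q\|_{L^2}^2$ and $\int u_{0,n}^{m+1}=\lambda_n^{m-1}\int Q^{m+1}$, so that, setting $A=\tfrac12\|D^{\alpha/2}Q\|_{L^2}^2$ and $B=\tfrac{1}{m(m+1)}\int Q^{m+1}$,
\[
E[u_{0,n}]=f(\lambda_n), \qquad f(\lambda)=A\,\lambda^{2\alpha/d}-B\,\lambda^{m-1}.
\]
Lemma~\ref{PHident} with $c=1$ yields the Pohozaev relation $\tfrac{2\alpha A}{d}=(m-1)B$, so that $f'(1)=0$; differentiating once more and using the same relation,
\[
f''(1) = \tfrac{2\alpha A}{d}\!\left(\tfrac{2\alpha}{d}-1\right)-(m-1)(m-2)B = (m-1)B\!\left(\tfrac{2\alpha}{d}+1-m\right)<0
\]
by supercriticality. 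Moreover, the equation $f'(\lambda)=0$ reduces to $\lambda^{m-1-2\alpha/d}=\tfrac{2\alpha A}{d(m-1)B}$, whose positive exponent $m-1-\tfrac{2\alpha}{d}>0$ pins $\lambda=1$ as the unique positive critical point of $f$. Combined with $f(\lambda)\to 0^+$ as $\lambda\to 0^+$ and $f(\lambda)\to-\infty$ as $\lambda\to\infty$ (both forced by $2\alpha/d<m-1$), this makes $\lambda=1$ the strict global maximum of $f$ on $(0,\infty)$; since $\lambda_n\neq 1$ for every $n\geq 1$, we conclude $E[u_{0,n}]=f(\lambda_n)<f(1)=E[Q]$.

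The main obstacle is precisely the globality of this strict inequality: it is not a purely local Taylor statement near $\lambda=1$, as $\lambda_n$ can be as large as $\lambda_1=2$, and it relies crucially on the supercriticality $m>\tfrac{2\alpha}{d}+1$, which simultaneously fixes the sign of $f''(1)$ and forces $\lambda=1$ to be the only positive critical point of $f$. Once this variational picture of $f$ is in place, the three conclusions follow in one package.
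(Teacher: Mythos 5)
Your proof is correct and follows essentially the same route as the paper: scaling identities for the mass and energy, the Pohozaev relation from Lemma~\ref{PHident} to identify $\lambda=1$ as the critical point of the scaled energy $f(\lambda)$, and supercriticality $m>\tfrac{2\alpha}{d}+1$ to conclude strict negativity. The only cosmetic difference is that the paper checks directly that $f$ (equivalently, their auxiliary function) is decreasing on $(1,\infty)$ and evaluates at $\lambda_n>1$, whereas you establish the slightly stronger statement that $\lambda=1$ is the unique global maximum on $(0,\infty)$; both hinge on the same sign condition $2\alpha/d<m-1$.
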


\begin{proof}
By rescaling, we have $\|u_{0,n}\|_{L^2}=\|Q\|_{L^2}$, and it is not difficult to see that $\|u_{0,n}-Q\|_{H^{\frac{\alpha}{2}}} \to 0$ as $n\to \infty$. It remains then to compute the energy of $u_{0,n}$. We have
\begin{equation}
    \begin{aligned}
    E[u_{0,n}]=&\frac{1}{2}\int |D^{\frac{\alpha}{2}}u_{0,n}(x)|^2\, dx -\frac{1}{m(m+1)}\int (u_{0,n}(x))^{m+1}\, dx \\
    =&\frac{\lambda_n^{\frac{2\alpha}{d}}}{2}\int |D^{\frac{\alpha}{2}}Q(x)|^2\, dx -\frac{\lambda_n^{m-1}}{m(m+1)}\int |Q(x)|^{m+1}\, dx.
    \end{aligned}
\end{equation}
Now, by the Pokhozhaev identities in Lemma \ref{PHident}, we have
\begin{equation}
    \|Q\|_{L^{m+1}}^{m+1}=\frac{\alpha\, m (m+1)}{d(m-1)}\|D^{\frac{\alpha}{2}}Q\|_{L^2}^2.
\end{equation}
Therefore, we obtain
\begin{equation}
    \begin{aligned}
    E[u_{0,n}]-E[Q]=\frac{1}{m(m+1)}\Big(\frac{d(m-1)}{2\alpha}(\lambda_n^{\frac{2\alpha}{d}}-1)-(\lambda_n^{m-1}-1)\Big)\int |Q(x)|^{m+1}\, dx.
    \end{aligned}
\end{equation}
Thus, to show that $E[u_{0,n}]<E[Q]$, it suffices to obtain 
\begin{equation}\label{eqins2}
\frac{d(m-1)}{2\alpha}(\lambda_n^{\frac{2\alpha}{d}}-1)-(\lambda_n^{m-1}-1) < 0. \end{equation}
Since $\frac{2\alpha}{d}<m-1$, the function $f(x):=\frac{d(m-1)}{2\alpha}(x^{\frac{2\alpha}{d}}-1)-(x^{m-1}-1)$ is decreasing for $x> 1$; in particular, yielding $f(\lambda_n)< f(1)=0$, which is exactly \eqref{eqins2}. 
\end{proof}

To prove the theorem, we assume to the contrary that $Q$ is stable. Then for every $\omega>0$, Proposition \ref{propApr} assures that there exists $n(\omega)\geq 1$ integer such that 
\begin{equation}\label{stableassumeq1}
u_{n(\omega)}(t)\in U_{\omega},
\end{equation}
where $u_{n(\omega)}(t)$ solves \eqref{EQ:fKdV} with the initial condition $u_{0,n(\omega)}$. We note that $u_{n(\omega)}$ has cylindrical invariance, since its initial condition satisfies this property and \eqref{EQ:fKdV} is invariant under this property whenever $d\geq 2$. Thus, we take $\omega_0$ such that $0<\omega_0<\omega_2<\omega_1 \ll 1$, where $\omega_1> 0$ is given by Proposition \ref{propmodThe} and $\omega_2>0$ by Lemma \ref{lemmacontrompar} and Remark \ref{remarkregulModPar}. To simplify notation, we will omit the index $n(\omega_0)$ from now on.  By Definition \ref{epsidefi} there exists a function
\begin{equation}\label{epsdef2}
\epsilon(t)=\epsilon_{(z_1(t),\dots, z_d(t))}=u(x+z_1(t),\dots,x_d+z_d(t))-Q(x),
\end{equation}
$x=(x_1,\dots,x_d)\in \mathbb{R}^d$, which satisfies \eqref{eqMT3.1}, and from the second conclusion in Proposition \ref{propmodThe}, we also have $z_j(t)=0$ for all $t\geq 0$, $j=2,\dots,d$.

Since $u(t)\in U_{\omega_0}$, from Proposition \ref{propmodThe} and Lemma \ref{lemmacontrompar} we have
\begin{equation}\label{epsdef3}
\|\epsilon(t)\|_{H^{\frac{\alpha}{2}}}\lesssim \omega_0, \, \, \text{ and } \, \,  |z'(t)-1|\lesssim \omega_0.
\end{equation}
We first establish an upper bound for the operator $\mathcal{J}_A$, $A\geq 1$. 
We remark that the next result is valid for any dispersion $0<\alpha<2$ (there will be some technical restriction later). 
\begin{proposition}\label{boundJ}
Assume $0<\alpha<2$ and  $\frac{2\alpha}{d}+1<m< m_{\ast}$ is an integer.
Let $A\geq 1$ and $\mathcal{J}_A(t)$ be the functional defined in \eqref{Jfunct}. If $\omega_0>0$ in \eqref{epsdef3} is sufficiently small, then there exists a constant $M_0>0$ independent of $A$ such that
for all $t \geq 0$
\begin{equation}\label{E:J_A<}
|\mathcal{J}_A(t)|\leq M_0A^{\frac{1}{2}}.
\end{equation}
\end{proposition}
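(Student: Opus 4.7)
The plan is to deduce the bound by a direct Cauchy--Schwarz argument, packaging together the uniform $H^{\alpha/2}$-control of the modulation error $\epsilon(t)$ provided by the stability assumption and the $A^{1/2}$-growth of $\|F_A\|_{L^2}$ already implicit in the proof of Lemma~\ref{Fproperties}.

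First I would apply Cauchy--Schwarz directly to the definition \eqref{Jfunct} of $\mathcal{J}_A(t)$, writing
\[
|\mathcal{J}_A(t)| = \Big|\int_{\mathbb{R}^d}\epsilon(t,x)\,F_A(x)\,dx\Big| \leq \|\epsilon(t)\|_{L^2(\mathbb{R}^d)}\,\|F_A\|_{L^2(\mathbb{R}^d)}.
\]
From the standing assumption \eqref{stableassumeq1} that $u(t)\in U_{\omega_0}$ for all $t\geq 0$, together with Proposition~\ref{propmodThe} and \eqref{epsdef3}, the first factor admits the uniform bound $\|\epsilon(t)\|_{L^2}\leq\|\epsilon(t)\|_{H^{\alpha/2}}\leq C\omega_0$, where $C$ depends only on $Q$ and, crucially, not on $A$ or $t$.

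The second factor is exactly the quantity estimated in the proof of Lemma~\ref{Fproperties}: using that $\mathrm{supp}(\sigma_A)\subset\{|x_1|\leq 2A\}$ is a slab of width $4A$ in the $x_1$-direction, the pointwise bound \eqref{boundF0} with $l=0$, and the existence of exponents $m_1>(d-1)/2$, $m_2>1$ with $m_1+m_2=d+\alpha$ (which is possible precisely because $\alpha>\max\{1-d/2,0\}$ implies $\alpha+d>(d+1)/2$), an application of Fubini's theorem yields
\[
\|F_A\|_{L^2(\mathbb{R}^d)}\lesssim 1+A^{1/2}\lesssim A^{1/2}
\]
for $A\geq 1$, as recorded in \eqref{L2estF}, with an implicit constant depending only on $Q$, $\chi_0$, $\beta$, $d$, $\alpha$.

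Combining the two estimates gives $|\mathcal{J}_A(t)|\lesssim \omega_0\,A^{1/2}$, and the claim follows by setting $M_0$ equal to the resulting constant, which is manifestly independent of $A$ and $t$. There is no substantial obstacle: the argument is essentially a repackaging of ingredients already in Lemma~\ref{Fproperties}, and the only point requiring attention is keeping track that neither implicit constant depends on $A$, which is clear from the explicit form of \eqref{L2estF} and the fact that the modulation bound for $\epsilon(t)$ is purely dynamical.
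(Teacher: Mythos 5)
Your argument is correct and reaches the bound by essentially the same ingredients as the paper, but with a slightly more modular packaging. You apply Cauchy--Schwarz once over all of $\mathbb{R}^d$ and then invoke the $\|F_A\|_{L^2}\lesssim A^{1/2}$ estimate already established in \eqref{L2estF} inside the proof of Lemma~\ref{Fproperties}. The paper instead iterates the estimate: it first takes a supremum of $|F|$ in $x_1$ (bounding it by $\langle(x_2,\dots,x_d)\rangle^{-m_1}$ via \eqref{boundF0}), applies Cauchy--Schwarz in $x_1$ alone over the truncated slab $\{|x_1|\le 2A\}$ to extract the $A^{1/2}$ factor and $\|\epsilon\|_{L^2_{x_1}}$, and then applies Cauchy--Schwarz again in the transverse variables. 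The two routes rely on identical facts (the pointwise decay of $F$, the width $4A$ of the truncation, and the uniform $L^2$-control of $\epsilon(t)$ from the stability assumption), so the difference is purely organizational; your version has the advantage of not repeating the computation that already appears in Lemma~\ref{Fproperties}. One small imprecision: you cite $\alpha>\max\{1-d/2,0\}$ to justify the choice of exponents $m_1,m_2$, but the condition actually needed is the weaker $\alpha>\tfrac12-\tfrac{d}{2}$, which holds automatically for every $d\ge1$ and $\alpha>0$; this is consistent with the proposition being stated under the blanket hypothesis $0<\alpha<2$, and the paper explicitly remarks that the stronger restriction only enters later in Theorem~\ref{lowerboundThm}.
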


\begin{proof}
Let $d\geq 2$ and recall the support of the function $\sigma_A$. Then by the Cauchy-Schwarz inequality and \eqref{boundF0}, we upper estimate
\begin{equation}
\begin{aligned}
|\mathcal{J}_A(\epsilon)|
\lesssim & \int_{\mathbb{R}^{d-1}} \sup_{x_1\in \mathbb{R}}|F(x_1,x_2,\dots,x_d)|\Big(\int_{-2A}^{2A}|\epsilon(x_1,x_2,\dots,x_d,t)|\, dx_1\Big) \, dx_2\dots dx_d\\
&\lesssim A^{\frac{1}{2}}\int_{\mathbb{R}^{d-1}} \langle (x_2,\dots,x_d) \rangle^{-m_1}\Big(\int_{\mathbb{R}} |\epsilon(x_1,\dots,x_d,t)|^2\, dx_1\Big)^{\frac{1}{2}} \, dx_2\dots dx_d\\
&\lesssim A^{\frac{1}{2}}\|\langle (x_2,\dots,x_d) \rangle^{-m_1}\|_{L^2(\mathbb{R}^{d-1})}\|\epsilon(t)\|_{L^2(\mathbb{R}^d)},
\end{aligned}
\end{equation}
where we set $\frac{d}{2}-\frac{1}{2}<m_1<d+\alpha-1$. We remark that a similar estimate as the one above also holds when $d=1$. Noting that the last two norms above are bounded independently of $A$, completes the proof of the lemma. 
\end{proof}

\begin{theorem}\label{lowerboundThm}
Assume $\max\{0,1-\frac{d}{2}\}<\alpha<2$ and $\frac{2\alpha}{d}+1<m< m_{\ast}$ is an integer. If $\omega_0>0$ is sufficiently small and $A\geq 1$ is large enough, then there exists a constant $a_0>0$ such that for all $t \geq 0$
\begin{equation}
\Big|\frac{d}{dt}\mathcal{J}_A(t) \Big| \geq a_0>0.
\end{equation}
\end{theorem}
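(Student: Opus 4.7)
Using Lemma \ref{lemmaderJ}, the time derivative $\frac{d}{dt}\mathcal{J}_A(t)$ splits into the principal term $\beta\lambda_0 \int \epsilon \chi_0 \sigma_A$ and an error $R_1(\epsilon)$. The strategy is to bound the principal term below by some $2a_0>0$ uniformly in $t$, and each summand of $R_1$ by less than $a_0$ by choosing $\omega_0$ small and $A$ large; this yields the claimed lower bound on $|\frac{d}{dt}\mathcal{J}_A|$.

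For the principal term, I would exploit the conservation laws. Proposition \ref{propApr} gives $E[u_{0,n}]<E[Q]$ and $M[u_{0,n}]=M[Q]$, and since $\epsilon(t)$ differs from $u(t)$ only by a translation, one has $W[Q+\epsilon(t)]=W[u_{0,n}]=W[Q]-\delta_n$ for some fixed $\delta_n>0$. Lemma \ref{lemWF} then rewrites this as $(L\epsilon,\epsilon)+2K[\epsilon]=-2\delta_n$. Since $\epsilon\perp \partial_{x_j}Q$ (Definition \ref{epsidefi}), the Orthogonality Condition II in Theorem \ref{propL} combined with the bound $|K[\epsilon]|\lesssim \omega_0\|\epsilon\|_{H^{\alpha/2}}^2$ from Lemma \ref{lemWF} yields, for $\omega_0$ small,
\begin{equation*}
\frac{k_1}{2}\|\epsilon(t)\|_{H^{\alpha/2}}^2+2\delta_n\leq k_2|(\epsilon(t),\chi_0)|^2,\qquad \mbox{so}\quad |(\epsilon(t),\chi_0)|^2\geq \frac{2\delta_n}{k_2}\quad\mbox{for all } t\geq 0.
\end{equation*}
To pass from $(\epsilon,\chi_0)$ to $\int \epsilon\chi_0\sigma_A$, I would use the polynomial decay $|\chi_0(x)|\lesssim \langle x\rangle^{-\alpha-d}$ from Lemma \ref{LemmaChiprop} together with $\|\epsilon(t)\|_{L^2}\lesssim \omega_0$ to control $|\int \epsilon\chi_0(1-\sigma_A)|=o_A(1)$, producing $|\beta\lambda_0\int \epsilon\chi_0\sigma_A|\geq 2a_0>0$ for $A$ large.

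For the error terms in $R_1(\epsilon)$, I would estimate each summand separately. The sum $\sum_{j=2}^d z_j'(t)(\cdots)$ vanishes since $z_j(t)\equiv 0$ for $j\geq 2$ by cylindrical symmetry (Proposition \ref{propmodThe}). For $\int \epsilon Q\sigma_A$, mass conservation gives $(\epsilon,Q)=-\tfrac12\|\epsilon\|_{L^2}^2=O(\omega_0^2)$, plus a tail $o_A(1)$. Every term prefaced by $(z_1'(t)-1)$ is $O(\omega_0)\cdot O(1)$ or $O(\omega_0)\cdot o_A(1)$ using $|z_1'(t)-1|\lesssim \omega_0$ from Lemma \ref{lemmacontrompar}; the key simplification in $(z_1'(t)-1)\int Q(\Lambda Q+\beta\chi_0)(\sigma_A-1)$ is that $\int Q(\Lambda Q+\beta\chi_0)=0$ by the defining choice of $\beta$ in Lemma \ref{lemmaderJ}, so a dominated convergence argument produces $o_A(1)$. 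The commutator $\int \epsilon [L,\sigma_A](\Lambda Q+\beta\chi_0)=\int \epsilon [D^\alpha,\sigma_A](\Lambda Q+\beta\chi_0)$ is controlled via Proposition \ref{fractionalDeriv} using $|\partial^\beta \sigma_A|\lesssim A^{-|\beta|}$ and the fast polynomial decay of $\Lambda Q+\beta\chi_0$ supplied by Lemmas \ref{LemmaQprop} and \ref{LemmaChiprop}. The $\tfrac{1}{A}$-weighted terms become $o_A(1)$ using the bound $\|F\sigma'_A\|_{L^2}\lesssim A^{1/2}$ that follows from Lemma \ref{Fproperties} and the support of $\sigma'_A$. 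Finally, $\int R(\epsilon)F_A$, after an $x_1$-integration by parts and using boundedness of $F$ (the primitive of an $L^1$ function), is estimated by Gagliardo--Nirenberg as $O(\omega_0^3)$.

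The hard part will be the two terms involving the nonlocal operator $D^\alpha$ interacting with the cutoff $\sigma_A$, namely $\int \epsilon[D^\alpha,\sigma_A](\Lambda Q+\beta\chi_0)$ and $\tfrac{1}{A}\int \epsilon L(F\sigma'_A)$, because $F$ is only a primitive in $x_1$ of an $L^1$ quantity and $\sigma'_A$ is supported in the shell $A\leq |x_1|\leq 2A$. Handling them requires jointly exploiting the decay estimates of $Q$ and $\chi_0$ (Lemmas \ref{LemmaQprop}--\ref{LemmaChiprop}), the $1/A$-scaling of derivatives of $\sigma_A$, and the fractional commutator expansion from Proposition \ref{fractionalDeriv}. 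Once these are under control, for $\omega_0$ small enough and $A$ large enough one concludes $|\tfrac{d}{dt}\mathcal{J}_A(t)|\geq 2a_0-a_0=a_0>0$ uniformly for $t\geq 0$, completing the proof.
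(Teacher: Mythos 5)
The overall strategy here is the same as the paper's (decompose $\frac{d}{dt}\mathcal{J}_A$ via Lemma \ref{lemmaderJ}, use mass/energy conservation and Proposition \ref{propApr} to feed into Lemma \ref{lemWF} and the Orthogonality Condition II, then bound the error summands using modulation control, the decay of $Q$ and $\chi_0$, and the fractional commutator expansion). However, there is a genuine gap in the final bookkeeping that the paper's proof is carefully designed to avoid.

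You derive the correct coercivity estimate $\tfrac{k_1}{2}\|\epsilon(t)\|_{H^{\alpha/2}}^2+2\delta_n\leq k_2\,\theta(t)^2$ with $\theta(t)=(\epsilon(t),\chi_0)$, but then immediately discard the $\|\epsilon\|_{H^{\alpha/2}}^2$ contribution and only retain $|\theta(t)|\geq \sqrt{2\delta_n/k_2}$. With that weaker bound you try to close the argument by making each error summand in $R_1(\epsilon)$ smaller than $a_0\sim\sqrt{\delta_n}$. But several of those summands --- the mass-conservation piece $\int\epsilon Q\sigma_A=-\tfrac12\|\epsilon\|_{L^2}^2+o_A(1)$, the terms prefaced by $(z_1'(t)-1)\int\epsilon(\Lambda Q+\beta\chi_0)\sigma_A$, and $\int R(\epsilon)F_A$ (whose $k=2$ term is already $O(\|\epsilon\|_{L^2}^2)$, not $O(\omega_0^3)$) --- are of size $O(\|\epsilon\|_{H^{\alpha/2}}^2)$ and do \emph{not} vanish as $A\to\infty$. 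They can only be bounded by $C\omega_0^2$. Under the contradiction hypothesis, $\delta_n=2(E[Q]-E[u_{0,n}])$ is determined after $\omega_0$: stability hands you some $\delta(\omega_0)>0$, you must take $n$ large so that $\|u_{0,n}-Q\|_{H^{\alpha/2}}<\delta(\omega_0)$, and then $\delta_n\lesssim\|u_{0,n}-Q\|_{H^{\alpha/2}}^2\lesssim\delta(\omega_0)^2$. Since there is no quantitative lower bound on $\delta(\omega_0)$ in terms of $\omega_0$, you cannot guarantee $C\omega_0^2<c\sqrt{\delta_n}$, and the absorption fails.

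The paper's proof avoids this precisely by \emph{keeping} both terms: it extracts $|\theta(t)|\gtrsim\|\epsilon(t)\|_{H^{\alpha/2}}+\sqrt{\delta_0}$. The error is then written as $|R_2(\epsilon)|\lesssim(\omega_0+A^{-1/2}+A^{-\alpha}+A^{-1})\|\epsilon\|_{H^{\alpha/2}}$, so the $\|\epsilon\|_{H^{\alpha/2}}$-linear piece of the main term absorbs all error terms once $\omega_0$ and $A$ are fixed based only on the \emph{universal} constant $c\beta\lambda_0\sqrt{k_1/k_2}$ (no comparison with $\delta_0$ needed), and the remaining $c\beta\lambda_0\sqrt{\delta_0/k_2}$ survives intact as $a_0>0$. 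To repair your proof you should not discard the $\tfrac{k_1}{2}\|\epsilon\|_{H^{\alpha/2}}^2$ contribution; instead take square roots to get both pieces of the lower bound and absorb the error into the $\|\epsilon\|_{H^{\alpha/2}}$-linear piece, exactly as the paper does.
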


\begin{proof}
Since $z_j(t)=0$ for all $t\geq 0$, $j=2,\dots,d$, the identity \eqref{derivJ} reduces to
\begin{equation}
\frac{d}{dt}\mathcal{J}_A(t)=\beta \lambda_0 \int \epsilon \chi_0+ R_2(\epsilon),
\end{equation}
where
\begin{equation}
\begin{aligned}
R_2(\epsilon)=&\beta \lambda_0 \int \epsilon \chi_0(\sigma_A-1)+\int \epsilon Q \sigma_A-(z_1'(t)-1)\int \epsilon (\Lambda Q+\beta \chi_0)\sigma_{A}\\
&-(z_1'(t)-1)\int Q(\Lambda Q+\beta \chi_0)(\sigma_{A}-1)\\
&-\int \epsilon[L,\sigma_A] ( \Lambda Q+\beta \chi_0)-\frac{1}{A}\int \epsilon L\big(F\sigma'_A\big)\\
&-\frac{(z'_1(t)-1)}{A}\int (Q+\epsilon)F\sigma'_A-\int R(\epsilon)F_A\\
=:&\sum_{j=1}^8 R_{2,j},
\end{aligned}
\end{equation}
with $R(\epsilon)$ defined by \eqref{eqMT2.1}. We proceed to estimate each factor $R_{2,j}$ in the identity above. But first, we use the mean value inequality to deduce
\begin{equation}\label{diffapprox}
    |\sigma_A(x_1,\dots,x_d)-1|=|\widetilde{\sigma}_A(x_1)-\widetilde{\sigma}_A(0)|\lesssim \frac{\langle x_1 \rangle}{A},
\end{equation}
where the implicit constant above is independent of $A\geq 1$. Thus, the above expression, the Cauchy-Schwarz inequality, and the decay properties of $\chi_0$ yield
\begin{equation*}
    \begin{aligned}
    |R_{2,1}|\lesssim \frac{1}{A}\|\epsilon\|_{L^2}\|\langle x_1 \rangle \chi_0\|_{L^2}\lesssim  \frac{1}{A}\|\epsilon\|_{L^2}.
    \end{aligned}
\end{equation*}
Note that $\langle x_1\rangle \chi_0\in L^2(\mathbb{R}^d)$, if $\alpha>1-\frac{d}{2}$ (which holds true when $d\geq 2$), or if  $\alpha>\frac{1}{2}$ in the one-dimensional setting, $d=1$. 
For the second term we write
\begin{equation*}
R_{2,2}=\int \epsilon Q+\int \epsilon Q (\sigma_A-1).
\end{equation*}
By the $L^2$ conservation for the solutions of \eqref{EQ:fKdV} and \eqref{eqins2}, 
we get
\begin{equation*}
\int Q^2=\int u^2=\int Q^2+2\int \epsilon Q+\int\epsilon^2,
\end{equation*}
and thus,
\begin{equation}\label{eqinst4}
\Big|\int \epsilon Q\Big|\leq \frac{1}{2}\|\epsilon\|_{L^2}^2.
\end{equation}
Then, \eqref{eqinst4} and \eqref{diffapprox} imply
\begin{equation}
      \begin{aligned}
    |R_{2,2}|\lesssim \|\epsilon\|_{L^2}^2+\frac{1}{A}\|\epsilon\|_{L^2}\|\langle x_1 \rangle Q\|_{L^2}\lesssim \|\epsilon\|_{L^2}^2+\frac{1}{A}\|\epsilon\|_{L^2},
    \end{aligned}
\end{equation}
where for the last step we used the decay of $Q$ from \eqref{poldecayGS} and the fact that $\alpha>1-\frac{d}2$.
We use \eqref{eqMT6.0} and the Cauchy-Schwarz inequality to get
\begin{equation}\label{eqinst3}
\begin{aligned}
|R_{2,3}| &\lesssim \|\epsilon\|_{H^{\frac{\alpha}{2}}}\Big|\int \epsilon (\Lambda Q+\beta \chi_0)\sigma_A \Big| \\
&\lesssim \|\epsilon\|_{H^{\frac{\alpha}{2}}}\|\epsilon\|_{L^2}\| (\Lambda Q+\beta \chi_0) \|_{L^2}.
\end{aligned}
\end{equation}
Similarly, we use \eqref{diffapprox} to deduce
\begin{equation}
\begin{aligned}
|R_{2,4}| &\lesssim \|\epsilon\|_{H^{\frac{\alpha}{2}}}\Big|\int Q (\Lambda Q+\beta \chi_0)(\sigma_A-1) \Big| \\
&\lesssim \frac{1}{A}\|\epsilon\|_{H^{\frac{\alpha}{2}}}\|\langle x_1 \rangle Q\|_{L^2}\| (\Lambda Q+\beta \chi_0) \|_{L^2}.
\end{aligned}
\end{equation}
Given that $L=D^{\alpha}+1-Q^{m-1}$, by using that $[L,\sigma_A]=[D^{\alpha},\sigma_A]$, we write 
\begin{equation*}
\begin{aligned}
\left[L,\sigma_{A}\right](\Lambda Q+\beta \chi_0)=&\sum_{0<|\beta|\leq \alpha}\frac{1}{\beta!}\partial^{\beta}(\sigma_A)D^{\alpha,\beta}(\Lambda Q+\beta \chi_0)\\
&+R^{\ast}(\sigma_A,(\Lambda Q+\beta \chi_0)),    
\end{aligned}
\end{equation*}
where by Proposition \ref{fractionalDeriv},
\begin{equation*}
  \|R^{\ast}(\sigma_A,(\Lambda Q+\beta \chi_0))\|_{L^2} \lesssim \|D^{\alpha}(\sigma_A)\|_{L^{\infty}}\|(\Lambda Q+\beta \chi_0)\|_{L^2}\lesssim \frac{1}{A^{\alpha}}\|D^{\alpha}\sigma\|_{L^{\infty}}\|(\Lambda Q+\beta \chi_0)\|_{L^2}.
\end{equation*}
Above, we have used that $D^{\alpha}\sigma\in L^{\infty}(\mathbb{R}^d)$, which can be proven, by using the singular integral representation of $D^{\alpha}$ (see, \cite{NezzaPalatucciValdinoci2012}), and a change of variables to write
\begin{equation}\label{Doperaonsig}
\begin{aligned}
D^{\alpha}(\sigma_A)(x)=&\frac{1}{A^{\alpha}}D^{\alpha}(\sigma)(\frac{x}{A})\\
=&-\frac{c_{d,\alpha}}{2A^{\alpha}} \, p.v.\int_{\mathbb{R}^d} \frac{\sigma(\widetilde{x}+y)+\sigma(\widetilde{x}-y)-2\sigma(\widetilde{x})}{|y|^{d+\alpha}} \, dy,
\end{aligned}
\end{equation}
for some constant $c_{d,\alpha}$, and where $\widetilde{x}=\frac{x}{A}$ . Since $\sigma_A(x)=\widetilde{\sigma}_A(x_1)$, a second order expansion of $\sigma(x)$ assures the integrability at the origin of the integral on the right-hand side of \eqref{Doperaonsig}, while the integrability outside the origin follows from the fact that $\sigma\in L^{\infty}(\mathbb{R}^d)$. Consequently, we conclude
\begin{equation*}
    \|D^{\alpha}(\sigma_A)\|_{L^{\infty}}\lesssim \frac{\|D^{\alpha}\sigma\|_{L^{\infty}}}{A^{\alpha}}\lesssim \frac{1}{A^{\alpha}}.
\end{equation*}
Since $A\geq 1$,
\begin{equation*}
\begin{aligned}
\|\sum_{0<|\beta|\leq \alpha}\frac{1}{\beta!}\partial^{\beta}(\sigma_A)D^{\alpha,\beta}(\Lambda Q+\beta \chi_0)\|_{L^2} &\lesssim \sum_{0<|\beta|\leq \alpha}\|\partial^{\beta}(\sigma_A)\|_{L^{\infty}}\|D^{\alpha,\beta}(\Lambda Q+\beta \chi_0)\|_{L^2}\\
&\lesssim \frac{1}{A}\|(\Lambda Q+\beta \chi_0)\|_{H^{\alpha}}.
\end{aligned}
\end{equation*}
Summarizing, we obtain 
\begin{equation*}
 \|[L,\sigma_{A}](\Lambda Q+\beta \chi_0)\|_{L^2}\lesssim \Big(\frac{1}{A}+\frac{1}{A^{\alpha}}\Big)\|(\Lambda Q+\beta \chi_0)\|_{H^{\alpha}},
\end{equation*}
which allows us to conclude that
\begin{equation}
|R_{2,5}|\lesssim 
\|\epsilon\|_{L^2}\|[L,\sigma_A](\Lambda Q+\beta \chi_0)\|_{L^2} 
\lesssim 
\Big(\frac{1}{A}+\frac{1}{A^{\alpha}}\Big)\|\epsilon\|_{L^2}\|\Lambda Q+\beta \chi_0\|_{H^{\alpha}} 
\lesssim 
\Big(\frac{1}{A}+\frac{1}{A^{\alpha}}\Big)\|\epsilon\|_{L^2},
\end{equation}
where we used that $Q,\chi_0 \in H^{\infty}(\mathbb{R}^d)$ and the spatial decay properties of these functions. Next, we use the Cauchy-Schwarz inequality to find
\begin{equation*}
\begin{aligned}
|R_{2,6}|\lesssim & \frac{1}{A}\|\epsilon\|_{L^2}\|L(F\sigma_A')\|_{L^2} \\
\lesssim & \frac{1}{A} \|\epsilon\|_{L^2}\big(\|F\sigma_A'\|_{H^{\alpha}}+\|Q^{m-1}F\sigma_A'\|_{L^2}\big)\\ 
\lesssim & \frac{1}{A} \|\epsilon\|_{L^2}\big(\|F\sigma_A'\|_{H^{\alpha}}+\|F\|_{L^{\infty}} \|\sigma_A'\|_{L^{\infty}}  \|Q^{m-1}\|_{L^2}\big), 
\end{aligned}
\end{equation*}
where $\sigma_A'(x_1,\dots,x_d)=\widetilde{\sigma} \big(\frac{x_1}{A}\big)$,  where $\widetilde{\sigma}$ is given by \eqref{sigmaprim}.
Using $\alpha\leq 2$ and the estimate \eqref{boundF0}, similar reasoning as in \eqref{L2estF} implies 
\begin{equation*}
    \begin{aligned}
    \|F\sigma_A'\|_{H^{\alpha}}\lesssim 
    & {\|F\sigma_A'\|_{L^2} }
    +\sum_{j=1}^d\|\partial_{x_j}^2(F\sigma_A')\|_{L^2}\\
    \lesssim &1+\|\langle (x_2,\dots,x_d) \rangle^{-m_1}\|_{L^2(\mathbb{R}^{d-1})}\sum_{k=1}^3\|\sigma_A^{(k)}\|_{L^2(\mathbb{R})}\\
    \lesssim &  1+A^{\frac{1}{2}},
    \end{aligned}
\end{equation*}
with $\frac{d}{2}-\frac{1}{2}<m_1<d+\alpha-1$, the implicit constant above is independent of $A\geq 1$, and $\sigma^{(k)}_A(x_1,\dots,x_d)=(\widetilde{\sigma})^{(k)}\big(\frac{x_1}{A}\big)$, $k=1,2,3$.  
Hence, we have
\begin{equation*}
\begin{aligned}
|R_{2,6}|\lesssim \frac{(1+A^{\frac{1}{2}})}{A}\|\epsilon\|_{L^2}.    
\end{aligned}
\end{equation*}
A similar argument shows that 
\begin{equation*}
|R_{2,7}| \lesssim \frac{\|\epsilon\|_{H^{\frac{\alpha}{2}}}}{A}\big(\|Q\|_{L^2}+\|\epsilon\|_{L^2}\big)\|F\sigma_A'\|_{L^2} 
\lesssim \frac{\|\epsilon\|_{H^{\frac{\alpha}{2}}}}{A^{\frac{1}{2}}}.
\end{equation*}
By \eqref{eqMT2.1},  integrating by parts and using the Gagliardo–Nirenberg inequality (Proposition \ref{GNineq}), we get
\begin{equation}\label{eqinst2}
\begin{aligned}
|R_{2,8}|=\Big|\frac{1}{m} & \int \Big( \sum_{k=2}^{m} \binom{m}{k} Q^{m-k}\epsilon^k\Big)\partial_{x_1}(F_A)\Big| \lesssim \|\partial_{x_1}(F_A)\|_{L^{\infty}}\sum_{k=2}^{m}\|\epsilon\|_{L^k}^k \\
& \lesssim \|\partial_{x_1}(F_A)\|_{L^{\infty}} \sum_{k=2}^{m} \|D^{\frac{\alpha}{2}}\epsilon\|_{L^2}^{\frac{d(k-2)}{\alpha}}\|\epsilon\|_{L^2}^{k-\frac{d(k-2)}{\alpha}} 
\lesssim (1+\omega_0) \|\epsilon\|_{H^{\frac{\alpha}{2}}}^2,
\end{aligned}
\end{equation}
where the implicit constant is independent of $A\geq 1$. Gathering the estimates for $R_{2,j}$, $j=1,\dots,8$, together, we have
\begin{equation}\label{eqinst5}
|R_2(\epsilon)|\lesssim (\|\epsilon\|_{H^{\frac{\alpha}{2}}}+\frac{1}{A}+\frac{1}{A^{\alpha}}+\frac{1}{A^{\frac{1}{2}}})\|\epsilon\|_{H^{\frac{\alpha}{2}}},
\end{equation}
the implicit constant above is independent of $A\geq 1$, $0\leq \omega_0\leq 1$. Setting 
\begin{equation}
\theta(t)=\int \epsilon(t)\chi_0,
\end{equation}
by the last statement in  Theorem \ref{propL}, there exists $k_1,k_2>0$ such that
\begin{equation}
\theta(t)^2\geq \frac{k_1}{k_2}\|\epsilon(t)\|_{L^2}^2-\frac{1}{k_2}(L(\epsilon(t)),\epsilon(t)).
\end{equation}
From Lemma \ref{lemWF} and Proposition \ref{propApr}, it follows that 
\begin{equation}
\begin{aligned}
(L(\epsilon(t)),\epsilon(t))=&2W[u]-2W[Q]-2K[\epsilon]\\
=&2(E[u_0]-E[Q])+2(M[u_0]-M[Q])-2K[\epsilon]\\
= & -\delta_0-2K[\epsilon],
\end{aligned}
\end{equation}
with $\delta_0>0$. Consequently, there exists a constant $C>0$ such that
\begin{equation}
\begin{aligned}
\theta(t)^2 &\geq \frac{k_1}{k_2}\|\epsilon(t)\|_{H^{\frac{\alpha}{2}}}^2+\frac{1}{k_2}\big(\delta_0+2K[\epsilon]\big )\\
& \geq  \frac{k_1}{k_2}\|\epsilon(t)\|_{H^{\frac{\alpha}{2}}}^2+\frac{\delta_0}{k_2}-\frac{C}{k_2}
\sum_{k=3}^{m+1} \|D^{\frac{\alpha}{2}}\epsilon\|_{L^2}^{\frac{d(k-2)}{\alpha}}\|\epsilon\|_{L^2}^{k-\frac{d(k-2)}{\alpha}}.
\end{aligned}
\end{equation}
By following the argument in \eqref{eqinst2}, and using that $\epsilon$ satisfies the first inequality in  \eqref{epsdef3}, we can take $\omega_0$ small to obtain
\begin{equation}
\begin{aligned}
\theta(t)^2 \geq  \frac{k_1}{2k_2}\|\epsilon(t)\|_{H^{\frac{\alpha}{2}}}^2+\frac{\delta_0}{k_2}.
\end{aligned}
\end{equation}
Thus, the sign of $\theta(t)$ remains the same for all time $t\geq 0$. If $\theta(t)$ is positive, taking the square root of the above inequality and using \eqref{eqinst5}, we deduce
\begin{equation}
\begin{aligned}
\frac{d}{dt}\mathcal{J}_A(t)\geq & c \beta \lambda_0 \sqrt{\frac{k_1}{k_2}}\|\epsilon(t)\|_{H^{\frac{\alpha}{2}}}+c\beta \lambda_0 \sqrt{\frac{\delta_0}{k_2}}+R_{2}(\epsilon)\\
\geq & c \beta \lambda_0 \sqrt{\frac{k_1}{k_2}}\|\epsilon(t)\|_{H^{\frac{\alpha}{2}}}+c\beta \lambda_0 \sqrt{\frac{\delta_0}{k_2}}-c_1\big(\omega_0+\frac{1}{A}+\frac{1}{A^{\alpha}}+\frac{1}{A^{\frac{1}{2}}}\big)\|\epsilon\|_{H^{\frac{\alpha}{2}}}
\end{aligned}
\end{equation} 
for some $c>0$, and with $c_1>0$ independent of $A\geq 1$ and $\omega_0$. Thus, taking $0<\omega_0< 1$ small, and $A\geq 1$ large both in proportion to $c\beta \lambda_0 \sqrt{\frac{k_1}{k_2}}$ (which is a universal constant), we have
\begin{equation}
\begin{aligned}
\frac{d}{dt}\mathcal{J}_A(t) \geq  \frac{c \beta \lambda_0}{2} \sqrt{\frac{k_1}{k_2}}\|\epsilon(t)\|_{H^{\frac{\alpha}{2}}}+c\beta \lambda_0 \sqrt{\frac{\delta_0}{k_2}} >0,
\end{aligned}
\end{equation} 
for some constant $c>0$. We emphasize that in this part, we fix $A\geq 1$ such that the above inequality holds. If $\theta(t)\leq 0$, taking $0<\omega_0$ small enough, and $A\geq 1$ large, we can use a similar reasoning as above to assure the existence of a constant $a_0>0$ such that $\frac{d}{dt}\mathcal{J}_A(t)\leq -a_0<0$, completing the proof. 
\end{proof}

\begin{proof}[Proof of Theorem \ref{InstMainThm}] We fix $\omega_0>0$, and $A\geq 1$ such that Theorem \ref{lowerboundThm} is valid. If $\frac{d}{dt}\mathcal{J}_A(t)\geq a_0>0$, then integrating in the time variable $t$ both sides of this inequality yields
\begin{equation*}
\mathcal{J}_A(t)\geq a_0 t+\mathcal{J}_A(0), \, \, \text{ for all } \, t\geq 0,
\end{equation*}
which contradicts the boundedness of $\mathcal{J}_A(t)$ in Proposition \ref{boundJ}. A similar conclusion holds assuming that $\frac{d}{dt}\mathcal{J}_A(t)\leq -a_0<0$. We emphasize that the spatial decay properties of $Q$ in Theorem \ref{existTHR} yield
\begin{equation*}
\begin{aligned}
|\mathcal{J}_A(0)|\lesssim \|F_A\|_{L^{\infty}}\|Q\|_{L^1}\lesssim 1,
\end{aligned}    
\end{equation*}
where by Lemma \ref{Fproperties}, the implicit constant above is independent of $A\geq 1$.
\end{proof}


\section*{Appendix}\label{Appendix}

In this section we prove Lemmas \ref{decayQm}, \ref{conmident} and \ref{kernDecay}.

\begin{proof}[Proof of Lemma \ref{decayQm}]
By Leibniz's rule, we write
\begin{equation*}
    \partial^{\widetilde{\gamma}}\big(x^{\widetilde{\beta}}\partial^{\widetilde{\eta}}(Q^m)\big)(x)=\sum_{\substack{0\leq \widetilde{\gamma}_1\leq \widetilde{\gamma}\\ \widetilde{\eta}_1+\dots+\widetilde{\eta}_m=\widetilde{\gamma}-\widetilde{\gamma}_1+\widetilde{\eta}\\ |\widetilde{\gamma}_1|\leq |\widetilde{\beta}| }}c_{\widetilde{\gamma},\widetilde{\beta},\widetilde{\eta},\widetilde{\gamma}_1, \widetilde{\eta}_1,\dots,\widetilde{\eta}_m}\partial^{\widetilde{\gamma}_1}(x^{\widetilde{\beta}})\partial^{\widetilde{\eta}_1}Q\dots \partial^{\widetilde{\eta}_m}Q.
\end{equation*}
We use \eqref{decayinduction} to deduce
\begin{equation*}
\begin{aligned}
|\partial^{\widetilde{\gamma}_1}(x^{\widetilde{\beta}})(\partial^{\widetilde{\eta}_1}Q\dots \partial^{\widetilde{\eta}_m}Q)(x)|\lesssim & \langle x \rangle^{|\widetilde{\beta}|-|\widetilde{\gamma}_1|}\langle x \rangle^{-m\alpha-m d-\sum_{j=1}^m\min\{|\widetilde{\eta}_j|,\kappa-1\}}\\
\lesssim &  \big(\langle x \rangle^{|\widetilde{\beta}|-|\widetilde{\gamma}_1|}\langle x \rangle^{-(m-1)\alpha-(m-1) d-\sum_{j=1}^m\min\{|\widetilde{\eta}_j|,\kappa-1\}}\big)\langle x \rangle^{-\alpha-d}.
\end{aligned}
\end{equation*}
We set $H_{\widetilde{\gamma}_1, \widetilde{\eta}_1,\dots,\widetilde{\eta}_m}(x):=\langle x \rangle^{|\widetilde{\beta}|-|\widetilde{\eta}_1|}\langle x \rangle^{-(m-1)\alpha-(m-1) d-\sum_{j=1}^m\min\{|\widetilde{\eta}_j|,\kappa-1\}}$. Thus, to complete the proof of Lemma \ref{decayQm}, it is enough to show that $H_{\widetilde{\gamma}_1, \widetilde{\eta}_1,\dots,\widetilde{\eta}_m}(x)$ is bounded, continuous, and $H_{\widetilde{\gamma}_1, \widetilde{\eta}_1,\dots,\widetilde{\eta}_m}(x)\to 0$ as $|x|\to \infty$. Indeed, if for some $0\leq j'\leq m$, $\min\{|\widetilde{\eta}_{j'}|,\kappa-1\}=\kappa-1$, by using that $|\widetilde{\beta}|\leq \kappa$, we have
\begin{equation*}
\begin{aligned}
|H_{\widetilde{\gamma}_1, \widetilde{\eta}_1,\dots,\widetilde{\eta}_m}(x)|\lesssim & \langle x \rangle^{\kappa-|\widetilde{\gamma}_1|-(\kappa-1)}\langle x \rangle^{-(m-1)\alpha-(m-1) d-\sum_{\substack{1\leq j \leq m\\ j\neq j'}}\min\{|\widetilde{\eta}_j|,\kappa-1\}}\\
\lesssim & \langle x \rangle^{-(m-1)\alpha}\langle x \rangle^{-(m-1) d+1}\langle x \rangle^{-|\widetilde{\gamma}_1|-\sum_{\substack{1\leq j \leq m\\ j\neq j'}}\min\{|\widetilde{\eta}_j|,\kappa-1\}}.
\end{aligned}
\end{equation*}
Since $m\geq 2$, we have $-(m-1)d+1\leq 0$. Thus, it is clear that the above inequality implies the desired properties of the function $H_{\widetilde{\gamma}_1, \widetilde{\eta}_1,\dots,\widetilde{\eta}_m}(x)$.

Now, we assume that for all $j$,  $\min\{|\widetilde{\eta}_{j'}|,\kappa-1\}=|\widetilde{\eta}_{j'}|$, then since $|\widetilde{\eta}_1|+\dots+|\widetilde{\eta}_m|=|\widetilde{\gamma}|-|\widetilde{\gamma}_1|+|\widetilde{\eta}|$, we have
\begin{equation*}
\begin{aligned}
|H_{\widetilde{\gamma}_1, \widetilde{\eta}_1,\dots,\widetilde{\eta}_m}(x)|\lesssim & \langle x \rangle^{|\widetilde{\beta}|-|\widetilde{\gamma}_1|}\langle x \rangle^{-(m-1)\alpha-(m-1) d-\sum_{j=1}^m |\widetilde{\eta}_j|}\\
\lesssim & \langle x \rangle^{|\widetilde{\beta}|-|\widetilde{\gamma}|-|\widetilde{\eta}|-(m-1)\alpha-(m-1) d}.
\end{aligned}
\end{equation*}
The assumptions on $\widetilde{\beta}$, $\widetilde{\gamma}$, and $\widetilde{\eta}$ establish the required properties of $H_{\widetilde{\gamma}_1, \widetilde{\eta}_1,\dots,\widetilde{\eta}_m}(x)$, thus, finishing the proof.
\end{proof}

Next, we deduce Lemma \ref{conmident}.
\begin{proof}[Proof of Lemma \ref{conmident}]
We assume that $f\in \mathcal{S}(\mathbb{R}^d)$, the validity of identity \eqref{identdecay} under the general assumptions on $f$ stated in Proposition \ref{conmident} follows by approximating $f$ by Schwartz functions. By taking the Fourier transform of the commutator, and applying Leibniz rule, we get 
\begin{equation}\label{eqdecaQ1}
\begin{aligned}
    \mathcal{F}\big([x^{\beta},\partial^{\eta}(1+D^{\alpha})^{-1}]f\big)(\xi)=&i^{|\beta|}\partial^{\beta}\Big(\frac{i^{|\eta|}\xi^{\eta}}{(1+|\xi|^{\alpha})}\widehat{f}\Big)(\xi)-\frac{i^{|\beta|+|\eta|}\xi^{\eta}}{(1+|\xi|^{\alpha})}\partial^{\beta}\widehat{f}\\
    =&i^{|\beta|+|\eta|}\sum_{\substack{\beta_1+\beta_2=\beta\\ |\beta_1|\geq 1}}\partial^{\beta_1}\Big(\frac{\xi^{\eta}}{(1+|\xi|^{\alpha})}\Big)\partial^{\beta_2}\widehat{f}(\xi),
\end{aligned}
\end{equation}
where $\mathcal{F}$ denotes the Fourier transform operator. Recalling that $|\eta|=|\beta|$, by an inductive argument on the magnitude of the multi-index $|\beta_1|\geq 1$, with $1\leq |\beta_1|\leq |\beta|=|\eta|$, it is not hard to obtain the following identity
\begin{equation}\label{eqdecaQ2}
    \begin{aligned}
    \partial^{\beta_1}\Big(\frac{\xi^{\eta}}{(1+|\xi|^{\alpha})}\Big)=&\sum_{\substack{1\leq |\widetilde{\beta_1}|\leq |\beta_1|}}\sum_{1\leq l\leq |\widetilde{\beta_1}|}c_{\widetilde{\beta_1},l}\frac{|\xi|^{\alpha l-2|\widetilde{\beta}_1|}P_{l,2|\widetilde{\beta}_1|}(\xi)}{(1+|\xi|^{\alpha})^{1+|\widetilde{\beta}_1|}}P_{l,|\beta|-|\beta_1|}(\xi)\\
    &+\frac{c_{\beta,
    \beta_1}}{(1+|\xi|^{\alpha})}P_{0,|\beta|-|\beta_1|}(\xi),
    \end{aligned}
\end{equation}
where given $\widetilde{m}\geq 0$ integer, $P_{l,\widetilde{m}}(\xi)$, and $P_{0,\widetilde{m}}(\xi)$ denote homogeneous polynomials of degree $\widetilde{m}$, and the above identity holds for some constants $c_{l,\widetilde{\beta}_1},c_{\beta,\beta_1}$. Plugging \eqref{eqdecaQ2} into \eqref{eqdecaQ1}, then taking the inverse Fourier transform, yields the desired result.
\end{proof}

We conclude this section with the proof of Lemma \ref{kernDecay}. This result follows by standard arguments, though for the sake of completeness, we include it here.

\begin{proof}[Proof of Lemma \ref{kernDecay}]
Since $\psi \in C^{\infty}_c(\mathbb{R}^d)$, $|\gamma|=2\rho$, and $1\leq l\leq \rho$, we find
\begin{equation*}
\begin{aligned}
|K_{\rho,l,\gamma}(x)|= \Big|\int \frac{|\xi|^{\alpha l-2\rho}\xi^{\gamma}}{(\lambda+|\xi|^{\alpha})^{1+\rho}}\psi(\xi)e^{-i x\cdot \xi}\, d\xi \Big|\lesssim  \|\psi\|_{L^1}.
\end{aligned}    
\end{equation*}
Thus, we will assume that $|x|\geq 1$. Let $R=|x|^{-1}$, we write
\begin{equation*}
\begin{aligned}
K_{\rho,l,\gamma}(x)=& i^{|\gamma|}\int \frac{|\xi|^{\alpha l-2\rho}\xi^{\gamma}}{(\lambda+|\xi|^{\alpha})^{1+\rho}}\psi(\xi)\psi\big(\frac{\xi}{R}\big)e^{-i x\cdot \xi}\, d\xi   \\
&+i^{|\gamma|}\int \frac{|\xi|^{\alpha l-2\rho}\xi^{\gamma}}{(\lambda+|\xi|^{\alpha})^{1+\rho}}\psi(\xi)\big(1-\psi\big(\frac{\xi}{R}\big)\big)e^{-i x\cdot \xi}\, d\xi \\
=:&K_{\rho,l,\gamma,1}(x)+K_{\rho,l,\gamma,2}(x).
\end{aligned}
\end{equation*}
Using the fact that $1\leq l\leq \rho$, it follows that
\begin{equation*}
\begin{aligned}
|K_{\rho,l,\gamma,1}(x)|\lesssim & \int_{|\xi|\leq 2R} |\xi|^{\alpha} \frac{|\xi|^{\alpha (l-1)}}{(\lambda+|\xi|^{\alpha})^{1+\rho}}|\psi(\xi)|\, d\xi\\
\lesssim & \int_{|\xi|\leq 2R} |\xi|^{\alpha}\, d\xi\\
\lesssim & R^{\alpha+d}\sim |x|^{-(\alpha+d)}.
\end{aligned}    
\end{equation*}
On the other hand, let $j=1,\dots,d$ be any fixed direction such that $|x_j|\sim |x|$, where the implicit constant depends on $d$ and is independent of $j$ and $x$. We consider an integer $N\geq 1$ with $N>\alpha+d$. By using that $\partial_{\xi_j}^N(e^{-i x\cdot \xi})=(-i x_j)^N e^{-i x\cdot \xi}$, and integrating by parts, we deduce
\begin{equation*}
\begin{aligned}
K_{\rho,l,\gamma,2}(x)=&\frac{i^{|\gamma|}}{(i x_j)^N}\sum_{l_1+l_2+l_3+l_4+l_5=N}c_{l_1,\dots,l_5}\int \partial_{\xi_j}^{l_1}(|\xi|^{\alpha-2\rho})\partial_{\xi_j}^{l_2}(\xi^{\gamma})\partial_{\xi_j}^{l_3}\Big(\frac{|\xi|^{\alpha (l-1)}}{(\lambda+|\xi|^{\alpha})^{1+\rho}}\Big)\\
&\hspace{5cm} \times\partial_{\xi_j}^{l_4}\psi(\xi)\partial_{\xi_j}^{l_5}\big(1-\psi\big(\frac{\xi}{R}\big)\big)e^{-i x\cdot \xi}\, d\xi.
\end{aligned}
\end{equation*}
Now, since 
\begin{equation*}
    \big|\partial_{\xi_j}^{l_3}\Big(\frac{|\xi|^{\alpha (l-1)}}{(\lambda+|\xi|^{\alpha})^{1+\rho}}\Big)\big|\lesssim |\xi|^{-l_3},
\end{equation*}
$\xi \neq 0$, it is seen that
\begin{equation*}
\begin{aligned}
|K_{\rho,l,\gamma,2}(x)|\lesssim & \frac{1}{|x|^N} \sum_{l_1+l_2+l_3+l_4+l_5=N}\int  |\xi|^{\alpha-2\rho+|\gamma|-l_1-l_2-l_3}|\partial_{\xi_j}^{l_4}\psi(\xi)| \big|\partial_{\xi_j}^{l_5}\big(1-\psi\big(\frac{\xi}{R}\big)\big)\big|\, d\xi\\
\lesssim & \frac{1}{|x|^N} \sum_{l_1+l_2+l_3+l_4+l_5=N}\int  |\xi|^{\alpha-N}||\xi|^{l_4}\partial_{\xi_j}^{l_4}\psi(\xi)| \big||\xi|^{l_5}\partial_{\xi_j}^{l_5}\big(1-\psi\big(\frac{\xi}{R}\big)\big)\big|\, d\xi   \\
\lesssim & \frac{1}{|x|^N} \int_{|\xi|\geq 2R}  |\xi|^{\alpha-N}\, d\xi\\
\lesssim & \frac{R^{\alpha+d-N}}{|x|^N}\sim |x|^{-\alpha-d},
\end{aligned}
\end{equation*}
which completes the proof of \eqref{Kdecay1}. 

On the other hand, the condition $\mu>\max\{d-\alpha,0\}$ assures
\begin{equation*}
|\widetilde{K}_{\mu,\rho,l,\gamma}(x)|\lesssim \int_{|\xi|\geq 1}\frac{|\xi|^{\alpha l}}{(\lambda+|\xi|^{\alpha})^{1+\rho}}\frac{1}{|\xi|^{\mu}}\, d\xi\leq \int_{|\xi|\geq 1} \frac{1}{|\xi|^{\mu+\alpha}}\, d\xi\lesssim 1.
\end{equation*}
Using that $1-\psi(\xi)$ is supported on the set $\{|\xi|\geq 1\}$, one can use integration by parts and similar arguments as for proving the estimate for $K_{\rho,l,\gamma,2}(x)$ above, to deduce \eqref{Kdecay2}. This completes the proof of the lemma. 
\end{proof}


{\bf Data Availability.} 
Data sharing not applicable to this article as no datasets were generated or analyzed during the current study.


\bibliographystyle{abbrv}
\bibliography{ref_fKdV}

\end{document}